\newif\ifarXiv
\newtheorem{thm}{Theorem}[section]
\newtheorem*{Janson}{Janson's inequality}
\newtheorem*{Chernoff}{Chernoff's inequality}
\newtheorem*{FKG}{The FKG inequality}
\newtheorem{lemma}[thm]{Lemma}
\newtheorem{cor}[thm]{Corollary}
\newtheorem{prop}[thm]{Proposition}
\newtheorem*{claim}{Claim}
\newtheorem*{main_ass}{Main assumption}
\theoremstyle{definition}
\newtheorem*{defn}{Definition}
\newtheorem{cl}{Claim}
\newtheorem{remark}[thm]{Remark}
\DeclareMathOperator{\SF}{SF_0}
\DeclareMathOperator{\Var}{Var}
\newcommand{\one}{\mathbf{1}}
\newcommand{\Ex}{\mathbb{E}}
\newcommand{\G}{\mathcal{G1}}
\newcommand{\N}{\mathbb{N}}
\newcommand{\ZZ}{\mathbb{Z}}
\newcommand{\A}{\mathcal{A}}
\newcommand{\ds}{\displaystyle}
\def\G{\mathcal{G}}
\def\P{\mathcal{P}}
\def\S{\mathcal{S}}
\def\V{\mathcal{V}}
\def\Ex{\mathbb{E}}
\def\Pr{\mathbb{P}}
\def\RR{\mathbb{R}}
\def\ZZ{\mathbb{Z}}
\def\le{\leqslant}
\def\ge{\geqslant}
\def\eps{\varepsilon}
\def\<{\langle}
\def\>{\rangle}
\title{Random sum-free subsets of Abelian groups}
\author{J\'ozsef Balogh}
\address{Department of Mathematics, University of Illinois, 1409 W. Green Street, Urbana, IL 61801; and Department of Mathematics, University of California, San Diego, La Jolla, CA 92093} \email{jobal@math.uiuc.edu}
\author{Robert Morris} 
\address{IMPA, Estrada Dona Castorina 110, Jardim Bot\^anico, Rio de Janeiro, RJ, Brasil} \email{rob@impa.br}
\author{Wojciech Samotij}
\address{School of Mathematical Sciences, Tel Aviv University, Tel Aviv 69978, Israel; and Trinity College, Cambridge CB2 1TQ, UK} \email{samotij@post.tau.ac.il}
\thanks{Research supported in part by: (JB) NSF CAREER Grant DMS-0745185, UIUC Campus Research Board Grants 09072 and 11067, and OTKA Grant K76099; (RM) ERC Advanced grant DMMCA, and a Research Fellowship from Murray Edwards College, Cambridge; (WS) Parker Fellowship and Schark Fellowship (from UIUC Mathematics Department) and ERC Advanced Grant DMMCA}
\keywords{}
\begin{document}

\begin{abstract}
  We characterize the structure of maximum-size sum-free subsets of a random subset of an Abelian group $G$. In particular, we determine the threshold above which, with high probability as $|G| \to \infty$, each such subset is contained in some maximum-size sum-free subset of $G$, whenever $q$ divides $|G|$ for some (fixed) prime $q$ with $q \equiv 2 \pmod 3$. Moreover, in the special case $G = \ZZ_{2n}$, we determine the sharp threshold for the above property. The proof uses recent `transference' theorems of Conlon and Gowers, together with stability theorems for sum-free subsets of Abelian groups.
\end{abstract}

\maketitle

\section{Introduction}

One of the most important developments in Combinatorics over the past twenty years has been the introduction and proof of various `random analogues' of well-known theorems in Extremal Graph Theory, Ramsey Theory, and Additive Combinatorics. Such questions were first introduced for graphs by Babai, Simonovits, and Spencer~\cite{BSS} (see also the work of Frankl and R{\"o}dl~\cite{FrRo}) and for additive structures by Kohayakawa, {\L}uczak, and R\"odl~\cite{KLR1}. There has since been a tremendous interest in such problems (see, for example, \cite{FRRT,Luc,RR1,RR2}). This extensive study has recently culminated in the remarkable results of Conlon and Gowers~\cite{CG} and Schacht~\cite{Sch} (see also \cite{BaMoSa-ind, FRS, Sa, SaTh}) in which a general theory was developed to attack such questions.

The main theorems in~\cite{CG} and~\cite{Sch} resolved many long-standing open questions; however, they provide only asymptotic results. For example, they imply that, with high probability\footnote{We say that a sequence $(\A_n)$ of events holds with \emph{high probability} if the probability that $\A_n$ holds tends to $1$ as $n \to \infty$.}, the largest triangle-free subgraph of the random graph $G(n,p)$ has 
\[
\left( \frac{1}{2} + o(1) \right) p {n \choose 2}
\]
edges if $p \gg 1/ \sqrt{n}$, which is best possible. A much more precise question asks the following: For which functions $p = p(n)$ is, with high probability, the largest triangle-free subgraph of $G(n,p)$ bipartite? It was proved that this is true if $p = 1/2$ by Erd{\H{o}}s, Kleitman, and Rothschild~\cite{EKR}, if $p \ge 1/2 - \delta$ by Babai, Simonovits, and Spencer~\cite{BSS}, and if $p \ge n^{-\eps}$ by Brightwell, Panagiotou, and Steger~\cite{BPS}; here, $\delta$ and $\eps$ are small positive constants. Finally, after this work had been completed, DeMarco and Kahn~\cite{DMKa} showed that this also holds if $p \ge C \sqrt{\log n / n}$, where $C$ is some positive constant. This is best possible up to the value of $C$ since, as pointed out in~\cite{BPS}, the statement is false when $p \le \frac{1}{10} \sqrt{\log n / n}$.

In the setting of additive number theory, the first results on such problems were obtained by Kohayakawa, {\L}uczak, and R\"odl~\cite{KLR1}, who proved the following random version of Roth's~Theorem~\cite{Roth}. The \emph{$p$-random subset} of a set $X$ is the random subset of $X$, where each element is included with probability $p$, independently of all other elements. We say that a subset $B$ of an Abelian group $G$ is \emph{$\delta$-Roth} if every subset $A \subseteq B$ with $|A| \ge \delta |B|$ contains a $3$-term arithmetic progression. The main result of~\cite{KLR1} is that for every $\delta > 0$, if $p \ge C(\delta) / \sqrt{n}$, then the $p$-random subset of $\ZZ_n$ is $\delta$-Roth with high probability. This is again best possible up to the constant $C(\delta)$.

Given a sequence $(G_n)$ of Abelian groups with $|G_n| = n$, we say that $p_c = p_c(n)$ is a \emph{threshold function} for a property $\P$ if the $p$-random subset $B \subseteq G_n$ has $\P$ with high probability if $p = p(n) \gg p_c(n)$ and $B$ does not have $\P$ with high probability if $p \ll p_c$. A threshold function is \emph{sharp} if moreover the above holds with $p > (1 + \eps)p_c$ and $p < (1 - \eps)p_c$ for any fixed $\eps > 0$. Bollob\'as and Thomason~\cite{BT} proved that every monotone property has a threshold function, and there exists a large body of work addressing the problem of the existence of sharp thresholds for monotone properties, see~\cite{Fried,FK,Hatami}. Non-monotone properties, such as the one which we shall be studying, are more complicated and in general they do not have such thresholds. We shall prove that such a threshold \emph{does} exist in the setting of sum-free subsets of Abelian groups described below and, moreover, we shall determine it. For $\ZZ_{2n}$, we shall prove much more: that there exists a sharp threshold.

A subset $A$ of an Abelian group $G$ is said to be \emph{sum-free} if there is no solution to the equation $x + y = z$ with $x,y,z \in A$. Such forbidden triples $(x,y,z)$ are called \emph{Schur triples}. (Note that we forbid some triples with $x = y$, and also some with $x = z$; the results and proofs in the case that such triples are allowed are identical.) In 1916, Schur~\cite{Schur} proved that, given any $r$-colouring of the integers, there exists a monochromatic Schur triple. Graham, R\"odl, and Ruci\'nski~\cite{GRR} studied the random version of Schur's Theorem and proved that the threshold function for the existence of a 2-colouring of the $p$-random set $B \subseteq \ZZ_n$ without a monochromatic Schur triple is $1/\sqrt{n}$. The extremal version of this question, that is, that of determining the size of the largest sum-free subset of the $p$-random subset of $\ZZ_n$, was open for fifteen years, until it was recently resolved by Conlon and Gowers~\cite{CG} and Schacht~\cite{Sch}.  

Sum-free sets have been extensively studied over the past 40 years (see, e.g.,~\cite{AW,AK,Cam2,CKS,Frei,Yap69}), mostly in the extremal setting. For example, in 1969 Diananda and Yap~\cite{DY} determined the extremal density for a sum-free set in every Abelian group $G$ such that $|G|$ has a prime divisor $q$ with $q \not\equiv 1 \pmod 3$. However, more than 30 years had passed until the classification was completed by Green and Ruzsa~\cite{GR2} (see Theorem~\ref{thm:muG}, below). Another well-studied problem was the Cameron-Erd\H{o}s Conjecture (see~\cite{A,C,CE1,GR1,LS}), which asked for the number of sum-free subsets of the set $\{1, \ldots, n\}$ and was finally solved by Green~\cite{G1} and, independently, by Sapozhenko~\cite{Sa-CE}. A refinement of their theorem has recently been established in~\cite{AlBaMoSa-CE}.

In this paper, we shall study the analogue of the Babai-Simonovits-Spencer problem for sum-free sets. Let $G = \ZZ_{2n}$. It is not hard to see that the unique maximum-size sum-free set in $G$ is the set $O_{2n}$ of odd numbers. We shall prove a probabilistic version of this statement and determine the threshold $p_c$ for the property that the unique maximum-size sum-free subset of the $p$-random subset $B$ of $G$ is the set $B \cap O_{2n}$. In fact, we shall do better and determine a \emph{sharp} threshold for this property. 

For a set $B \subseteq G$, let $\SF(B)$ denote the collection of maximum-size sum-free subsets of~$B$. The following theorem is our main result.

\begin{thm}
  \label{thm:sharp}
  Let $C = C(n) \gg \log n / n$ and let $p = p(n)$ satisfy $p^2 n = C \log n$. Let $G_p$ be the $p$-random subset of $\ZZ_{2n}$. Then
  \[
  \Pr\Big( \SF(G_p) = \{G_p \cap O_{2n}\} \Big) \to
    \begin{cases}
      0 & \text{if $\limsup_n C(n) < 1/3$} \\
      1 & \text{if $\liminf_n C(n) > 1/3$}
    \end{cases}
  \]
  as $n \to \infty$. Moreover, if $C = 1/3$, then $\Pr\big( \SF(G_p) = \{G_p \cap O_{2n}\} \big) \to 0$ as $n \to \infty$.
\end{thm}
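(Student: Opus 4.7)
I would prove the two halves by separate methods. For the 0-statement I use a second moment argument; for the 1-statement I combine transference with a careful union bound over ``near-$O_{2n}$'' modifications.

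For the 0-statement, associate to each even $2k \in \mathbb{Z}_{2n}$ the ``conflict graph'' $H_{2k}$ on the odd elements of $\mathbb{Z}_{2n}$, whose edges are the sum-pairs $\{y, 2k-y\}$ and shift-pairs $\{y, y+2k\}$; a direct count gives $|E(H_{2k})| = 3n/2 + O(1)$ and shows $H_{2k}$ is essentially $3$-regular, and $(G_p \cap O_{2n}) \cup \{2k\}$ is sum-free iff no edge of $H_{2k}$ has both endpoints in $G_p \cap O_{2n}$. Let $X$ be the number of $2k \in G_p \cap E_{2n}$ with this property; then $X \ge 1$ forces $\SF(G_p) \ne \{G_p \cap O_{2n}\}$. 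Janson's inequality (with dependency sum $\Delta = O(np^3) = o(\mu)$ where $\mu = \tfrac32 np^2$) gives $\Pr\bigl(H_{2k}\text{ edge-free in }G_p\cap O_{2n}\bigr) = n^{-3C/2 + o(1)}$, so $\Ex[X] = \Theta\bigl(n^{1/2 - 3C/2}\sqrt{\log n}\bigr)$, which diverges for $\limsup C \le 1/3$ and equals $\Theta(\sqrt{\log n})$ at $C = 1/3$. Distinct $H_{2k}, H_{2k'}$ share at most the $n$ common shift pairs in the special case $k' \equiv -k$ and only $O(1)$ edges otherwise, and a covariance computation shows $\Var(X) = O(\Ex[X])$; Chebyshev then gives $\Pr(X \ge 1) \to 1$.

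For the 1-statement I would first apply the transference theorem of Conlon--Gowers or Schacht to the Green--Ruzsa stability result for sum-free subsets of $\mathbb{Z}_{2n}$ (any sum-free set of size $\ge (1-\delta)n$ lies within $\delta'n$ of $O_{2n}$), obtaining that with high probability every sum-free $A \subseteq G_p$ with $|A| \ge (1-\delta')|G_p|/2$ has the form $A = (G_p \cap O_{2n}) \setminus B \cup S$ with $B \subseteq G_p \cap O_{2n}$, $S \subseteq G_p \cap E_{2n}$ and $|B|, |S| = o(np)$. Chernoff concentration for $|G_p \cap O_{2n}|$ translates $|A| \ge |G_p \cap O_{2n}|$ into $|S| \ge |B|$ and $A \ne G_p \cap O_{2n}$ into $|S| \ge 1$. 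The sum-freeness of $A$ is equivalent to $B$ being a vertex cover of $\bigcup_{2k \in S} H_{2k}[G_p \cap O_{2n}]$; in particular $\mathrm{vc}(H_{2k}[G_p \cap O_{2n}]) \le |B|$ for each $2k \in S$. For $|B| \le 1$ (the threshold-defining case), the expected number of $2k \in G_p$ with $\mathrm{vc}(H_{2k}[G_p \cap O_{2n}]) \le 1$ is $O\bigl(n^{1/2 - 3C/2}(\log n)^{3/2}\bigr) = o(1)$ for $\liminf C > 1/3$, obtained by summing Janson-type bounds over the $n$ potential star centres. For $|B| \ge 2$ one uses the $3$-regularity of $H_{2k}$: a vertex cover of size $b$ covers at most $3b$ edges, so $\Pr(\mathrm{vc} \le b) = (\log n)^{O(b)} n^{-3C/2}$, and summing over $b$ up to the stability ceiling completes the bound.

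The main obstacle is precisely this counting step for $|B| \ge 2$: the naive expected count of configurations $(|S|, |B|) = (b, b)$ diverges polynomially for any $C < 2/3$, so the union bound must be organised around the observation that each such configuration contains an ``elementary'' sub-configuration on which the useful-element reduction applies. Iterating this peeling---while preserving the sharp constant $1/3$ in the exponent and accommodating the full range $C \gg \log n / n$---is the technical centrepiece of the argument.
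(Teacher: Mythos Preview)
Your $0$-statement is essentially the paper's: a second-moment count of ``safe'' even elements, with FKG for the lower bound on $\Pr(S_x)$, Janson for $\Pr(S_x\wedge S_y)$, the same $\approx 3n/2$ edge count in each $H_{2k}$, and the same handling of the $x\leftrightarrow -x$ overlap. The architecture of your $1$-statement also matches: transference reduces to ruling out pairs $(S,B)$ with $1\le|B|\le|S|\le\eps pn$, and you correctly identify that the naive union bound over such pairs only closes for $C>2/3$.

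The gap is in how you repair that union bound. First, the claim $\Pr\big(\mathrm{vc}(H_{2k}[G_p])\le b\big)=(\log n)^{O(b)}n^{-3C/2}$ does not follow from ``a cover of size $b$ meets at most $3b$ edges'': that observation alone yields $\binom{n}{b}p^b\,n^{-3C/2}\approx (np/b)^b n^{-3C/2}$, which is polynomial in $n$ for small $b$. What is missing is a \emph{witness} for the cover. The paper's device is: if $T$ is a \emph{minimal} cover of $\G_S[G_p]$, a maximal matching from $T$ into $G_p\setminus T$ produces $U\subseteq G_p$ with $|U|\le|T|$ and $T\subseteq N_{\G_S}(U)$; since $\Delta(\G_S)\le 3|S|$, this confines $T$ to a set of size $\le 3|S|\,|U|$, and the expected number of pairs $(T,U)$ collapses to $(O(p^2n))^{|S|}=(O(\log n))^{|S|}$. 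Your $|B|\le 1$ answer is numerically consistent with this trick, so perhaps you have it in mind, but for $|B|\ge 2$ it is the missing ingredient, and ``peeling to an elementary sub-configuration'' does not obviously describe it.

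Second, even granting the single-graph bound, routing the estimate through one $H_{2k}$ at a time (``$\mathrm{vc}(H_{2k})\le b$ for each $2k\in S$'') throws away the decay you need: applying Janson to the \emph{union} $\G_S=\bigcup_{s\in S}H_s$, which has $\approx\tfrac{3}{2}|S|\,n$ edges, gives $n^{-3C|S|/2}$, whereas a single $H_{2k}$ only gives $n^{-3C/2}$. Without the $|S|$ in the exponent, summing $(\log n)^{O(b)}n^{-3C/2}$ over $b\le\eps pn$ diverges badly. The paper therefore organises the count around triples $(S,T,U)$ and applies Janson to $\G_S$ on $O_{2n}\setminus(T\cup U)$, producing per-$k$ summands of order $n^{(1/2-3C/2+o(1))k}$, which sum precisely when $C>1/3$. (There is also some bookkeeping---tracking the number $m$ of pairs $\{x,-x\}\subseteq S$, which deplete $e(\G_S)$, and a separate Janson regime when $\Delta'\ge\mu'$---but these are routine once the witness-set idea is in place.)
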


We remark that the threshold for the asymptotic versions of this statement, i.e., that (i) the largest sum-free subset of $G_p$ has $(1+o(1))pn$ elements and (ii) that all sum-free subsets of $G_p$ with $(1+o(1))pn$ elements contain only $o(pn)$ even numbers, determined by Conlon and Gowers~\cite{CG} (both (i) and (ii)) and Schacht~\cite{Sch} (only (i)), is $1/\sqrt{n}$ and so it differs from our threshold by a factor of $\sqrt{\log n}$. 

We shall also determine the threshold $p_c(n)$ for all Abelian groups of type I($q$) (see the definition below), i.e., those for which $|G| = n$ has a (fixed) prime divisor $q$ with $q \equiv 2 \pmod 3$. Say that a set $B \subseteq G$ is \emph{sum-free good} if every maximum-size sum-free subset of $B$ is of the form $B \cap A$ for some $A \in \SF(G)$.

\begin{thm}
  \label{thm:main}
  Let $q$ be a prime number satisfying $q \equiv 2 \pmod 3$. There exist positive constants $c_q$ and $C_q$ such that the following holds.

  Let $\G = (G_n)_{n \in q\N}$ be a sequence of Abelian groups, with $|G_n| = n$, such that $q$ divides $|G|$ for every $G \in \G$. Let $C = C(n) \gg \log n / n$, let $p = p(n)$ satisfy $p^2 n = C \log n$, and let $G_p$ be the $p$-random subset of $G = G_n$. Then 
  \[
  \Pr\Big( G_p \textup{ is sum-free good} \Big) \to
  \begin{cases}
    0 & \text{if $C < c_q$} \\
    1 & \text{if $C > C_q$}
  \end{cases}
  \]
  as $n \to \infty$.
\end{thm}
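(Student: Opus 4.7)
My plan is to handle the two directions of the theorem by rather different arguments, both building on the Green--Ruzsa classification of maximum sum-free subsets of type I($q$) groups: every $A \in \SF(G)$ has the form $\phi^{-1}(H)$ for a surjective homomorphism $\phi : G \to \ZZ_q$ and a maximum sum-free $H \subseteq \ZZ_q$, so in particular $|\SF(G)|$ is bounded by a polynomial in $n$.

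For the direction $C > C_q$, I would combine a transferred stability result with a local rigidity argument. First, I would apply the Conlon--Gowers transference theorem~\cite{CG} to the Green--Ruzsa stability theorem: for every $\delta > 0$ there is $\eta > 0$ such that, w.h.p., every sum-free $S \subseteq G_p$ with $|S| \ge (1 - \eta) p\mu(G) n$ satisfies $|S \triangle (G_p \cap A)| \le \delta p n$ for some $A \in \SF(G)$. Now suppose that $S$ is a maximum-size sum-free subset of $G_p$ and let $X = S \setminus A$ and $R = (G_p \cap A) \setminus S$, so that $|S| - |G_p \cap A| = |X| - |R|$. Sum-freeness of $S$ forces $R$ to be a vertex cover of the ``conflict graph'' $H_x$ on $G_p \cap A$ whose edges are the pairs $\{y, z\} \subseteq G_p \cap A$ completing a Schur triple with $x$, for every $x \in X$. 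Each $x \notin A$ lies in $\Theta(n)$ such Schur triples (with both other elements in $A$), so $|E(H_x)|$ is binomially distributed and w.h.p.\ of order $\Theta(C \log n)$; crucially, at most $O_q(1)$ Schur triples pass through any fixed pair $(x, y)$, so no vertex of $H_x$ covers more than $O_q(1)$ of its edges, forcing the vertex cover $\tau(H_x) = \Omega(C \log n)$ w.h.p. For $C_q$ large enough this gives $|R| > |X|$ whenever $X \neq \emptyset$, contradicting the maximality of $|S|$.

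For the direction $C < c_q$, I would show that w.h.p., for \emph{every} $A \in \SF(G)$, some $x \in G_p \setminus A$ can be appended to $G_p \cap A$ while preserving sum-freeness; this produces a sum-free subset of $G_p$ strictly larger than $G_p \cap A$, so no $G_p \cap A$ can be a maximum sum-free subset of $G_p$. Call $x \in G \setminus A$ \emph{free for $A$} if no Schur triple $\{x, y, z\}$ has both $y, z \in G_p \cap A$. Since each such $x$ participates in $\Theta(n)$ relevant pairs, a Poissonian estimate gives $\Pr[x \text{ is free}] \approx (1 - p^2)^{\Theta(n)} = n^{-\Theta(C)}$, and the expected number of free $x$ in $G_p \setminus A$ is of order $pn \cdot n^{-\Theta(C)} = n^{1/2 - \Theta(C)} \sqrt{\log n}$, which tends to infinity for $C < c_q$. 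A second-moment computation, using that the Schur-triple ``links'' of distinct candidates overlap in only $O_q(1)$ pairs, gives w.h.p.\ existence of such an $x$, and a union bound over the polynomially many $A \in \SF(G)$ concludes.

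The main obstacle is the vertex-cover lower bound $\tau(H_x) = \Omega(C \log n)$ in the rigidity step. One must not only rule out that $H_x$ itself has a small cover (handled by the additive fact that only $O_q(1)$ Schur triples pass through any pair), but also, when $|X| > 1$, that several $x \in X$ conspire to share a common small cover. A further subtlety is that the transference step only localizes $S$ to within $\delta p n$ of some $A \in \SF(G)$; one must verify that this $A$ is uniquely determined by $S$, which follows from the fact that distinct $A, A' \in \SF(G)$ have $|A \triangle A'| = \Omega(n)$ provided $\delta$ is chosen small enough.
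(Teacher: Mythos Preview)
Your overall strategy matches the paper's---Conlon--Gowers transference plus a local ``bad pair'' argument for the $1$-statement, and a ``free element'' construction for the $0$-statement---but both halves contain genuine gaps.

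For the $1$-statement, the degree-based vertex-cover bound is too weak. With $|X|=k$, the conflict graph $\G_S$ on $A$ has $\Theta(kn)$ edges and maximum degree $O(k)$, so its restriction to $G_p$ has w.h.p.\ $\Theta(kC\log n)$ edges; but then $\tau \ge e/\Delta$ only yields $\tau = \Omega(C\log n)$, a bound \emph{independent of $k$}. This gives $|R|>|X|$ only for $|X|=O(\log n)$, whereas you must handle $|X|$ up to $\eps pn = \Theta(\sqrt{n\log n})$. You flag this as the main obstacle but do not resolve it. The paper instead fixes both $S$ and a candidate cover $T$ with $|T|=k$, applies Janson's inequality to show that $\Pr\big(e(\G_{S,T}[G_p])=0\big)$ is at most $\exp(-\Omega(p^2kn))$ or $\exp(-\Omega(pn))$, and then takes a union bound over all $\binom{n}{k}^2 p^{2k}$ expected pairs $(S,T)$. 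It is this exponential bound, summed over exponentially many pairs, that handles all $k$ simultaneously; a pointwise vertex-cover estimate cannot.

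For the $0$-statement there are two problems. First, your claim that the Schur links of distinct $x,x'$ overlap in only $O_q(1)$ pairs fails when $G$ has large $2$-torsion: if $\{a,b\}\in C_2(x)\cap C_3(x')$ then $2a=x+x'$ and $2b=x-x'$, and the number of solutions is the order of the $2$-torsion subgroup; the paper's variance computation splits into an odd-order case and an even-order case, handling the latter by a separate argument. Second, and more structurally, a second-moment bound gives failure probability only $o(1)$ (Chebyshev yields $O(n^{-1/2+o(1)})$ here), which is not small enough to union-bound over the up to $|\SF(G)|\le n$ choices of $A$. The paper sidesteps this union bound entirely: it fixes a \emph{single} $A\in\SF(G)$, shows that w.h.p.\ there are at least $10\sqrt{pn\log n}$ free elements of $G_p$ for this $A$, and then observes that the resulting sum-free set beats \emph{every} $A'\cap G_p$ at once, since by Chernoff (which does survive the union bound) all $|A'\cap G_p|$ lie within $4\sqrt{pn\log n}$ of $p\mu(G)n$. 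Finding many free elements for one $A$, rather than one free element for every $A$, is the organizing idea you are missing.
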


We remark that the conclusion of the theorem fails to hold when we do not assume that $|G|$ has a prime divisor $q$ with $q \equiv 2 \pmod 3$. Indeed, we shall show (see Proposition~\ref{c_ex_2}) that if $G = \ZZ_{3q}$, where $q$ is prime and $q \equiv 1 \pmod 3$, then the probability that $G_p$ is sum-free good goes to zero (as $n = |G| \to \infty$) for all $p \ll ( n \log n )^{-1/3}$. We shall also prove the same bound for the group $G = \ZZ_q$, where $q \equiv 2 \pmod 3$ (see Proposition~\ref{c_ex_1}), which shows that the condition $n \gg q$ in Theorem~\ref{thm:main} is also necessary.

We note also that, perhaps not surprisingly, the constant $C = 1/3$ in Theorem~\ref{thm:sharp} is not the same for every Abelian group $G$ with $|G|$ even. Indeed, we shall show (see Proposition~\ref{2^k}) that for the hypercube $G = \ZZ_2^k$ with $|G| = 2n$, the threshold $p_c$ is at least $\sqrt{\log n / (2n)}$. 

Finally, we remark that, in a recent paper with Alon~\cite{AlBaMoSa-SF}, we prove a `counting version' of Theorem~\ref{thm:main}. More precisely, we determine the threshold for $m$ above which a uniformly selected random $m$-element sum-free subset of an $n$-element group $G$ of type $I$ is contained in some maximum size sum-free subset of $G$. The somewhat related problem of determining the size of the largest Sidon set\footnote{A set $A$ is called a \emph{Sidon set} if all the sums $x + y$, where $x, y \in A$ with $x \le y$ are distinct.} in a $p$-random subset of the set $\{1, \ldots, n\}$ has also recently been addressed in~\cite{KoLeRoSa}, where sharp bounds were obtained for a large range of~$p$.

The remainder of the paper is organized as follows. In Section~\ref{sec:preliminaries}, we collect some extremal results on Abelian groups and the probabilistic tools that will be needed later. In particular, we state our main tool, a theorem of Conlon and Gowers~\cite{CG} (see also \cite{Sa,Sch}), which provides asymptotic versions of Theorems~\ref{thm:sharp} and~\ref{thm:main}. Theorem~\ref{thm:main} is proved in Section~\ref{sec:proof-thm-main} and in Section~\ref{sec:disc-thm-main} we prove the lower bounds for other Abelian groups described above. Finally, in Section~\ref{sec:proof-thm-Z2n}, we prove Theorem~\ref{thm:sharp}.

\section{Preliminaries and tools}\label{sec:preliminaries}

\subsection{Extremal results on Abelian groups}

Let $G$ be a finite Abelian group. Given two subsets $A, B \subseteq G$, we let
\[A
\pm B = \{ a \pm b \colon a \in A \text{ and } b \in B\}.
\]
Note that a subset $A$ of $G$ is sum-free if $(A+A) \cap A = \emptyset$. We begin with an important definition in the study of sum-free subsets of finite Abelian groups.

\begin{defn}
  Let $G$ be a finite Abelian group. We say that
  \begin{enumerate}
  \item
    $G$ is of type I if $|G|$ has at least one prime divisor $q$ with $q \equiv 2 \pmod 3$.
  \item
    $G$ is of type II if $|G|$ has no prime divisors $q$ with $q \equiv 2 \pmod 3$, but $|G|$ is divisible by $3$.
  \item
    $G$ is of type III if every prime divisor $q$ of $|G|$ satisfies $q \equiv 1 \pmod 3$.
  \end{enumerate}
  Moreover, we say that $G$ is of type I($q$) if $G$ is of type I and $q$ is the smallest prime divisor of $|G|$ with $q \equiv 2 \pmod 3$.
\end{defn}

Given an Abelian group $G$, let $\mu(G)$ be the density of the largest sum-free subset of $G$ (so that this subset has $\mu(G) |G|$ elements). As noted in the Introduction, the problem of determining $\mu(G)$ for an arbitrary $G$ has been studied for more than 40 years, but only recently was it solved completely by Green and Ruzsa~\cite{GR2}. 

\begin{thm}[Diananda and Yap~\cite{DY}, Green and Ruzsa~\cite{GR2}]
  \label{thm:muG}
  Let $G$ be an arbitrary finite Abelian group. Then
  \[
  \mu(G) =
  \begin{cases}
    \frac{1}{3} + \frac{1}{3q} & \text{if $G$ is of type I($q$),} \\
    \frac{1}{3} & \text{if $G$ is of type II,} \\
    \frac{1}{3} - \frac{1}{3m} & \text{if $G$ is of type III,} \\
  \end{cases}
  \]
  where $m$ is the exponent (the largest order of any element) of $G$.
\end{thm}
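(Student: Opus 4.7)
The plan is to handle the two directions separately. The lower bounds come from explicit constructions obtained by pulling back a ``middle-third'' sum-free set through a surjective homomorphism onto a cyclic quotient. If $G$ is of type I($q$), then writing $q = 3k+2$, the interval $\{k+1,\ldots,2k+1\} \subset \ZZ_q$ is sum-free with $(q+1)/3$ elements, and since $q$ divides $|G|$ there is a surjection $G \twoheadrightarrow \ZZ_q$ whose preimage gives a sum-free subset of $G$ of density $\tfrac{1}{3} + \tfrac{1}{3q}$. For type II, a surjection $G \twoheadrightarrow \ZZ_3$ and the singleton $\{1\}$ yield density $\tfrac{1}{3}$. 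For type III, an element of maximal order $m$ gives a cyclic direct summand (by the invariant factor decomposition), hence a surjection $G \twoheadrightarrow \ZZ_m$; since every prime factor of $m$ is $\equiv 1 \pmod 3$ we have $m = 3k+1$, and pulling back the middle-third interval $\{k+1,\ldots,2k\} \subset \ZZ_m$ gives a sum-free set of density $\tfrac{1}{3} - \tfrac{1}{3m}$.

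For the upper bounds I would start with Kneser's theorem. Given a sum-free $A \subseteq G$ of density $\alpha$, set $S = A+A$ and let $H$ be its stabilizer, so that $S$ is a union of cosets of $H$. Sum-freeness gives $A \cap S = \emptyset$ and therefore $|A| + |S| \le |G|$, while Kneser's theorem gives $|S| \ge 2|A| - |H|$. Adding these yields
$$\alpha \;\le\; \tfrac{1}{3} + \tfrac{|H|}{3|G|},$$
so one is reduced to controlling the quotient $\bar G = G/H$ and the induced (necessarily sum-free) image $\bar A \subseteq \bar G$. In type I($q$), the worst case is $|\bar G| = q$, which produces exactly $\alpha \le \tfrac{1}{3} + \tfrac{1}{3q}$, matching the construction. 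In type II, one wants $|\bar G| \ge 3$ in order to obtain $\alpha \le \tfrac{1}{3}$; this follows because $|\bar G|=1$ forces $A+A = G$, which is incompatible with $A$ being nonempty and sum-free.

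The main obstacle will be type III: here Kneser alone does not capture the $\tfrac{1}{3m}$ gap, since the stabilizer analysis is not tight enough to detect the exponent. To close this gap I would follow the Green--Ruzsa strategy and move to Fourier analysis on $\widehat G$. The idea is that if $\alpha > \tfrac{1}{3} - \tfrac{1}{3m}$, then $|\widehat{\one_A}(\chi)|$ must be abnormally large for some nontrivial character $\chi$; the order of $\chi$ divides $|G|$, so under the type III hypothesis every such order has all prime factors $\equiv 1 \pmod 3$ and in particular is bounded above by the exponent $m$. A Fourier-analytic count of Schur triples $x+y=z$ inside $A$, using this spectral concentration and the exponent constraint, then forces at least one such triple to exist, contradicting sum-freeness. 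Turning this qualitative obstruction into the sharp quantitative bound $\tfrac{1}{3m}$ is the delicate step; this is precisely where Green and Ruzsa's argument does its real work, and it is the portion of the proof I would expect to require the most care.
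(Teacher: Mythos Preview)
The paper does not prove this theorem; it is quoted as a background result from the literature, with the type I and II cases attributed to Diananda and Yap and the type III case to Green and Ruzsa. There is therefore no proof in the paper to compare your proposal against.

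That said, your outline is broadly faithful to how the cited references proceed: the lower bounds via pulling back middle-third intervals through surjections onto cyclic quotients are exactly the standard constructions, and the Kneser-plus-quotient reduction is the classical route to the upper bounds for types I and II. Your Kneser sketch does need more care, however: the inequality $\alpha \le \tfrac{1}{3} + \tfrac{1}{3[G:H]}$ alone does not immediately give $\tfrac{1}{3} + \tfrac{1}{3q}$ in the type I($q$) case, since $[G:H]$ could in principle be a smaller divisor of $|G|$ (for instance $3$, or a prime $\equiv 1 \pmod 3$), and one must analyse the induced sum-free set in $G/H$ more closely to rule these cases out. For type III you correctly identify that Kneser is not enough and that the Green--Ruzsa argument is genuinely Fourier-analytic; your description of the strategy is accurate at the level of a sketch, and you are right that extracting the sharp $\tfrac{1}{3m}$ gap is where the substantive work lies.
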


Note that it follows immediately from Theorem~\ref{thm:muG} that  
\[
2/7 \, \le \, \mu(G) \, \le \, 1/2
\]
for every finite Abelian group $G$.

Recall that $\SF(G)$ denotes the collection of all maximum-size sum-free subsets of $G$, i.e., those that have $\mu(G)|G|$ elements. As well as determining $\mu(G)$, Diananda and Yap~\cite{DY} described $\SF(G)$ for all groups of type I (see also~\cite[Lemma 5.6]{GR2}). 

\begin{thm}[Diananda and Yap~\cite{DY}]
  \label{thm:SFG-structure}
  Let $G$ be a group of type $I(q)$ for some prime $q = 3k + 2$. For each $A \in \SF(G)$, there exists a homomorphism $\varphi \colon G \to \ZZ_q$ such that $A = \varphi^{-1}(\{k+1, \ldots, 2k+1\})$.

  In other words, every $A \in \SF(G)$ is a union of cosets of some subgroup $H$ of $G$ of index $q$, $A/H$ is an arithmetic progression in $G/H$, and $A \cup (A+A) = G$. 
\end{thm}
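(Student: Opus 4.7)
The plan is to use Kneser's theorem on the sumset $A+A$ to locate a subgroup $H \le G$ of index $q$ whose cosets partition $A$, and then apply inverse sumset theory in $\ZZ_q \cong G/H$ to pin down the shape of $A/H$.

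For the first step, let $A \in \SF(G)$ and let $H$ denote the stabilizer of $A+A$ in $G$. Kneser's theorem gives $|A+A| \ge 2|A+H| - |H|$, while sum-freeness of $A$ combined with the $H$-invariance of $A+A$ forces $A+H$ and $A+A$ to be disjoint (any $H$-coset meeting $A$ would otherwise be entirely contained in $A+A$, contradicting $A \cap (A+A) = \emptyset$), so $|A+H| + |A+A| \le |G|$. Adding these inequalities and using $|A| \le |A+H|$ together with $|A| = \frac{q+1}{3q}|G|$ yields $|H| \ge |G|/q$, i.e.\ $[G:H] \le q$. To upgrade this to equality, I would pass to the quotient: the image of $A+H$ in $G/H$ is a sum-free subset of density at least $\frac{q+1}{3q} > \frac{1}{3}$, but Theorem~\ref{thm:muG} applied to the group $G/H$ shows that $\mu(G/H) > \frac{1}{3}$ forces $G/H$ to contain a prime divisor $\equiv 2 \pmod 3$, and by minimality of $q$ among such divisors of $|G|$ that prime must be $q$ itself, hence $[G:H] = q$. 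Chasing the equality cases backwards then gives $|A+H| = |A|$, so $A$ is a union of exactly $k+1$ cosets of $H$, $|A+A| = (2k+1)|H|$, and $A \cup (A+A) = G$.

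For the second step, fix an isomorphism $G/H \to \ZZ_q$ and let $\varphi_0 \colon G \to \ZZ_q$ be its composition with the quotient map, so that $S := \varphi_0(A) \subseteq \ZZ_q$ is sum-free with $|S| = k+1$, $|S+S| = 2|S|-1 = 2k+1$, and $S \cup (S+S) = \ZZ_q$. For $q \ge 5$, Vosper's inverse theorem applies and forces $S$ to be an arithmetic progression $a + d \cdot \{0, 1, \ldots, k\}$ with $d \in \ZZ_q \setminus \{0\}$; since $S$ must coincide with the complement of $S+S$ in $\ZZ_q$, a short computation gives $a = -(2k+1)d$, so $S = d \cdot \{k+1, \ldots, 2k+1\}$. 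The remaining case $q = 2$ is immediate since then $|S|=1$ and the only sum-free singleton in $\ZZ_2$ is $\{1\}$. Setting $\varphi := d^{-1} \cdot \varphi_0$ produces the required homomorphism, and the ``in other words'' clause has already been recorded along the way. The main obstacle is the step that upgrades $[G:H] \le q$ to equality, since it genuinely relies on the Diananda--Yap--Green--Ruzsa classification of $\mu$ applied to the quotient; the remainder is a routine Kneser--Vosper sandwich together with an elementary complement computation in $\ZZ_q$.
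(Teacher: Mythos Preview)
The paper does not prove Theorem~\ref{thm:SFG-structure}; it is quoted from Diananda and Yap (with a pointer to \cite[Lemma~5.6]{GR2}) and used as a black box. So there is no in-paper argument to compare your proposal against.

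That said, your Kneser--Vosper route is correct. The chain
\[
3|A+H| \le |A+H| + |A+A| + |H| \le |G| + |H|
\]
together with $|A| = \tfrac{q+1}{3q}|G|$ indeed gives $[G:H] \le q$, and your use of Theorem~\ref{thm:muG} on the quotient to force $[G:H] = q$ is sound: any prime $q' \equiv 2 \pmod 3$ dividing $|G/H|$ also divides $|G|$, so $q' \ge q$ by the definition of type~I($q$), while $q' \le |G/H| \le q$. The equality case then pins down $A = A+H$ and $A \cup (A+A) = G$ exactly as you say, and the Vosper/complement computation in $\ZZ_q$ is clean (your case split $q = 2$ versus $q \ge 5$ is the right one, since Vosper needs $|S| \ge 2$ and $|S+S| \le q-2$).

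The only wrinkle, which you correctly flag yourself, is that invoking Theorem~\ref{thm:muG} on the quotient drags in the Green--Ruzsa type~III bound. This is anachronistic relative to the 1969 attribution but not circular: the type~III statement concerns groups whose order has only primes $\equiv 1 \pmod 3$, and neither its proof nor the Diananda--Yap type~I/II bounds rely on Theorem~\ref{thm:SFG-structure}. Within the logical framework of the present paper your argument is therefore perfectly valid; if you wanted a self-contained 1969-style argument you would need to handle the possibility that $G/H$ is type~III directly, but for the purposes here that refinement is unnecessary.
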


We shall also need the following well-known bound on the number of homomorphisms from an arbitrary finite Abelian group to a cyclic group of prime order, which follows easily from Kronecker's Decomposition Theorem. 

\begin{prop}
  \label{prop:HomGZq}
  For every prime $q$, the number of homomorphisms from a finite Abelian group $G$ to the cyclic group $\ZZ_q$ is at most $|G|$.
\end{prop}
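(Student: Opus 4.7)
The plan is to invoke the Fundamental Theorem of Finite Abelian Groups (Kronecker's Decomposition Theorem) to reduce the statement to an explicit count on cyclic factors. Write $G \cong \ZZ_{n_1} \oplus \cdots \oplus \ZZ_{n_k}$, so that $|G| = \prod_{i=1}^k n_i$. Since $\Hom(-, \ZZ_q)$ converts direct sums into direct products, we have
\[
\Hom(G, \ZZ_q) \; \cong \; \prod_{i=1}^k \Hom(\ZZ_{n_i}, \ZZ_q),
\]
so it suffices to understand each factor.

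For each $i$, a homomorphism $\varphi_i \colon \ZZ_{n_i} \to \ZZ_q$ is determined by $\varphi_i(1)$, which must be an element $x \in \ZZ_q$ satisfying $n_i \cdot x = 0$. The number of such $x$ is $\gcd(n_i, q)$, and since $q$ is prime this equals $q$ when $q \mid n_i$ and $1$ otherwise. Letting $t = |\{ i : q \mid n_i\}|$, it follows that
\[
|\Hom(G, \ZZ_q)| \; = \; \prod_{i=1}^k \gcd(n_i, q) \; = \; q^t.
\]

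To finish, I would simply observe that every $n_i$ with $q \mid n_i$ satisfies $n_i \ge q$, hence
\[
|G| \; = \; \prod_{i=1}^k n_i \; \ge \; \prod_{i \,:\, q \mid n_i} n_i \; \ge \; q^t \; = \; |\Hom(G, \ZZ_q)|,
\]
which is the desired bound. There is no genuine obstacle here; the only thing to be careful about is that $q$ is prime, which is exactly what forces $\gcd(n_i, q) \in \{1, q\}$ and in turn guarantees that each factor contributing to the homomorphism count is absorbed by a corresponding factor of $|G|$.
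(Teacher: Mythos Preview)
Your proof is correct and is exactly the argument the paper has in mind: the paper does not spell out a proof but simply states that the bound ``follows easily from Kronecker's Decomposition Theorem,'' and your decomposition of $G$ into cyclic factors, together with the count $|\Hom(\ZZ_{n_i},\ZZ_q)| = \gcd(n_i,q)$ and the observation $q \mid n_i \Rightarrow n_i \ge q$, is precisely that derivation.
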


Theorem~\ref{thm:SFG-structure} and Proposition~\ref{prop:HomGZq} immediately imply the following.

\begin{cor}
  \label{cor:SFG}
  Let $G$ be an arbitrary group of type I. Then $|\SF(G)| \le |G|$.
\end{cor}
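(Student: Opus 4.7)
The plan is to combine Theorem~\ref{thm:SFG-structure} with Proposition~\ref{prop:HomGZq} in a completely direct way. Let $G$ be of type~I$(q)$ with $q = 3k+2$. I would define the map
\[
\Phi \colon \Hom(G,\ZZ_q) \longrightarrow 2^G, \qquad \Phi(\varphi) = \varphi^{-1}(\{k+1, \ldots, 2k+1\}),
\]
and argue that its image contains $\SF(G)$.

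For this, I would invoke Theorem~\ref{thm:SFG-structure} directly: it asserts that for every $A \in \SF(G)$ there exists some $\varphi \in \Hom(G,\ZZ_q)$ with $A = \Phi(\varphi)$. Hence $\SF(G) \subseteq \Phi(\Hom(G,\ZZ_q))$, so
\[
|\SF(G)| \,\le\, |\Phi(\Hom(G,\ZZ_q))| \,\le\, |\Hom(G,\ZZ_q)|.
\]
Finally, applying Proposition~\ref{prop:HomGZq} to the right-hand side yields $|\Hom(G,\ZZ_q)| \le |G|$, which completes the argument.

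There isn't really a hard step here: both ingredients are already stated in the excerpt, and the corollary is essentially an immediate combination. The only thing one might worry about is whether $\Phi$ is well-defined as a map into subsets of $G$ (which it obviously is) and whether the surjectivity onto $\SF(G)$ might require the homomorphism to be surjective (it does, since otherwise $\Phi(\varphi)$ has the wrong size or is empty), but Theorem~\ref{thm:SFG-structure} already guarantees the existence of a suitable $\varphi$ for each $A \in \SF(G)$, so no additional work is needed. Thus the entire proof should be a two-line deduction.
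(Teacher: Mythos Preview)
Your proposal is correct and matches the paper's approach exactly: the paper states that the corollary follows immediately from Theorem~\ref{thm:SFG-structure} and Proposition~\ref{prop:HomGZq}, which is precisely the two-line deduction you outline.
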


We will also need the following corollary from the classification of maximum-size sum-free subsets of $\ZZ_{3q}$, where $q$ is a prime with $q \equiv 1 \pmod 3$, due to Yap~\cite{Yap70}.

\begin{cor}
  \label{cor:SFZ3q}
  If $q$ is a prime with $q \equiv 1 \pmod 3$, then $|\SF(\ZZ_{3q})| \le 2q$.
\end{cor}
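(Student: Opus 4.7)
The plan is to apply Yap's classification directly and then count. Since $\ZZ_{3q}$ is of type II (it is divisible by $3$ but has no prime divisor $\equiv 2 \pmod 3$), Theorem~\ref{thm:muG} gives $\mu(\ZZ_{3q}) = 1/3$, so every $A \in \SF(\ZZ_{3q})$ has size exactly $q$. Yap's classification should then partition these maximum-size sum-free subsets into two families: (a) the two \emph{coset-type} sets $\{x \in \ZZ_{3q} : x \equiv 1 \pmod 3\}$ and $\{x \in \ZZ_{3q} : x \equiv 2 \pmod 3\}$, which are precisely the sum-free cosets of the index-$3$ subgroup; and (b) the \emph{AP-type} sets, i.e., $q$-term arithmetic progressions in $\ZZ_{3q}$ whose common difference is a unit. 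Family (a) has exactly two members, so it suffices to bound (b) by $2(q-1)$, the number of units of $\ZZ_{3q}$.

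To identify which APs are sum-free, I would parametrise each candidate as $A = a + d \cdot \{0, 1, \ldots, q-1\}$ with $d$ a unit, and rewrite the condition $(A + A) \cap A = \emptyset$ as: no $i, j, k \in \{0, \ldots, q-1\}$ satisfy $i + j - k \equiv -a d^{-1} \pmod{3q}$. The values $i + j - k$ fill the block of $3q - 2$ consecutive integers $\{-(q-1), \ldots, 2(q-1)\}$, whose complement modulo $3q$ is exactly $\{2q-1, 2q\}$. Hence for each unit $d$ there are exactly two sum-free APs with common difference $d$, namely
\[
d \cdot \{q, q+1, \ldots, 2q-1\} \quad \text{and} \quad d \cdot \{q+1, q+2, \ldots, 2q\},
\]
giving at most $4(q-1)$ candidate AP-type sets in total.

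The final step is to remove the overcount. The key observation is that $-1 \cdot \{q, q+1, \ldots, 2q-1\} = \{q+1, q+2, \ldots, 2q\}$ in $\ZZ_{3q}$, so under the substitution $d \mapsto -d$ the two subfamilies merge into the single orbit $\{d \cdot \{q, q+1, \ldots, 2q-1\} : d \text{ a unit of } \ZZ_{3q}\}$, containing at most $2(q-1)$ distinct sets. Combining with the two coset-type sets---which cannot lie in family (b), since they are arithmetic progressions with common difference $3$, and $3$ is not a unit in $\ZZ_{3q}$---gives $|\SF(\ZZ_{3q})| \le 2(q-1) + 2 = 2q$, as required.

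The main obstacle I anticipate is faithfully invoking Yap's classification: once it is accepted that every maximum-size sum-free subset of $\ZZ_{3q}$ belongs to one of the two families above, the rest is a short calculation followed by accounting for the $d \leftrightarrow -d$ double count. An exotic maximum-size sum-free subset outside these two families would break the count, so it is precisely the reliability of the cited structural theorem that drives the argument.
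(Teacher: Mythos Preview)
Your proposal is correct and follows the same route the paper takes: the paper states Corollary~\ref{cor:SFZ3q} as a direct consequence of Yap's classification of maximum-size sum-free subsets of $\ZZ_{3q}$, without writing out a proof. You have supplied the explicit count that the paper leaves implicit---parametrising the AP-type sets, identifying the two admissible starting points $dq$ and $d(q+1)$ for each unit $d$, and collapsing via $d \leftrightarrow -d$---and your arithmetic is sound, so there is nothing to correct.
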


In the proof of Theorem~\ref{thm:main}, we will use the following two lemmas, which were proved in~\cite{GR2} and~\cite{LLS} respectively. The first lemma establishes a strong stability property for large sum-free subsets of groups of type I.

\begin{lemma}[Green and Ruzsa~\cite{GR2}]
  \label{lemma:stability}
  Let $G$ be an Abelian group of type $I(q)$. If $A$ is a sum-free subset of $G$ and 
  \[
  |A| \, \ge \, \left( \mu(G) - \frac{1}{3q^2+3q} \right)|G|,
  \]
  then $A$ is contained in some $A' \in \SF(G)$.
\end{lemma}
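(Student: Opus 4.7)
The plan is to split the argument into two stages: an approximate-stability step locating $A$ inside the preimage $\varphi^{-1}(I)$ of a max sum-free subset of $\ZZ_q$, and a sharpening step upgrading ``almost inside'' to ``inside'' using the full hypothesis on $|A|$.

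For the first step, sum-freeness of $A$ combined with Fourier analysis on $G$ produces a non-trivial character $\chi$ of $G$ with $|\widehat{1_A}(\chi)| \ge c(|A|/|G|)^2$ for an absolute constant $c > 0$: expand $\sum_{x,y} 1_A(x)1_A(y)1_A(x+y) = 0$ in the Fourier basis, isolate the contribution of the trivial character (which equals $|A|^3/|G|^2$), and bound the remainder by $\max_{\chi \ne 1}|\widehat{1_A}(\chi)|$ times $\|1_A\|_2^2$. Since $|A|/|G|$ is close to $\mu(G) = (q+1)/(3q)$, the order of $\chi$ must be exactly $q$: any other order $m$ would factor $\chi$ through $\ZZ_m$, but since $q$ is the smallest prime divisor of $|G|$ with $q \equiv 2 \pmod 3$, one would get $\mu(\ZZ_m) < \mu(G)$, inconsistent with the Fourier mass on $\chi$. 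Hence $\chi$ yields a surjective homomorphism $\varphi : G \to \ZZ_q$, and a routine argument comparing $\varphi(A) \subseteq \ZZ_q$ with the maximum sum-free subset of $\ZZ_q$ identified in Theorem~\ref{thm:SFG-structure} shows that $|A \setminus \varphi^{-1}(I)| = o(|G|)$, where $I = \{k+1, \ldots, 2k+1\}$ and $q = 3k+2$.

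For the second step, set $N = |G|/q$ and $a_j = |A \cap \varphi^{-1}(j)|$ for $j \in \ZZ_q$. Suppose, for contradiction, that $a_j > 0$ for some $j \notin I$. For every $i \in I$ with $i - j \in I$, sum-freeness of $A$ in $G$ forbids all triples $(x,y,x+y)$ with $x \in A \cap \varphi^{-1}(j)$, $y \in A \cap \varphi^{-1}(i-j)$, and $x + y \in A \cap \varphi^{-1}(i)$. Double-counting the elements of $\varphi^{-1}(i) \setminus A$ forced across $i \in I$ and translates $x \in \varphi^{-1}(j)$ yields a linear inequality of the shape
\[
\sum_{i \in I} a_i \;\le\; (k+1) N \;-\; \Lambda_q \sum_{j \notin I} a_j,
\]
with a coefficient $\Lambda_q$ large enough that $|A| = \sum_j a_j$ falls strictly below $\mu(G)|G| - |G|/(3q(q+1))$ as soon as $\sum_{j \notin I} a_j > 0$. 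This contradicts the hypothesis, so $a_j = 0$ for every $j \notin I$ and $A \subseteq \varphi^{-1}(I) \in \SF(G)$.

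The hardest part is extracting the sharp constant $1/(3q(q+1))$ in the deficit bound of the second step. The extremal configuration is essentially $\varphi^{-1}(I)$ with one interior fiber partially depleted and a single neighbouring exterior fiber partially filled; verifying that this is indeed the worst case reduces to solving a finite linear program on distributions $(a_j)_{j \in \ZZ_q}$ subject to the Schur-triple constraints imposed by sum-freeness in $G$, and pinning down its exact optimum (rather than an order-of-magnitude bound) is the quantitative heart of the argument.
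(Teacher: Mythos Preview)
The paper does not prove this lemma at all: it is stated with attribution to Green and Ruzsa~\cite{GR2} and used as a black box (together with Lemma~\ref{lemma:saturation}) to derive Corollary~\ref{cor:saturation}. So there is no ``paper's own proof'' to compare your proposal against.

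As for the proposal itself, the two-stage shape (Fourier step to locate a homomorphism to $\ZZ_q$, then a fibre-by-fibre counting to upgrade ``almost contained'' to ``contained'') is indeed the outline of the Green--Ruzsa argument. Two points deserve more care. First, your justification that the large character $\chi$ must have order exactly $q$ is too quick: a large Fourier coefficient at $\chi$ of order $m$ says that $A$ is biased along the fibres of a map to $\ZZ_m$, but it does not by itself give $|A|/|G| \le \mu(\ZZ_m)$; Green and Ruzsa need a separate argument (essentially Kneser-type reasoning on the image) to rule out characters whose order is not divisible by $q$, and then to pass from order divisible by $q$ to a genuine surjection onto $\ZZ_q$. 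Second, in the sharpening step the inequality you write down, with a single coefficient $\Lambda_q$, hides the combinatorics: for each $j \notin I$ one must identify enough pairs $i,i' \in I$ with $i+i' = j$ or $i - j \in I$ and track how the resulting forbidden triples force deficits in distinct fibres, which is where the exact constant $1/(3q^2+3q)$ emerges. Your description of the extremal configuration is on the right track, but the linear-program formulation would need to be made explicit to actually extract the sharp bound.
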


The second lemma is a rather straightforward corollary of a much stronger result of Lev, {\L}uczak, and Schoen~\cite{LLS}.

\begin{lemma}[Lev, {\L}uczak, and Schoen~\cite{LLS}]
  \label{lemma:saturation}
  Let $\eps > 0$, let $G$ be a finite Abelian group, and let $A \subseteq G$. If 
  \[
  |A| \, \ge \, \left( \ds\frac{1}{3} + \eps \right)|G|,
  \]
  then one of the following holds:
  \begin{itemize}
  \item[$(a)$] $|A \setminus A'| \le \eps|G|$ for some sum-free $A' \subseteq A$.\smallskip
  \item[$(b)$] $A$ contains at least $\eps^3|G|^2/27$ Schur triples.
  \end{itemize}
\end{lemma}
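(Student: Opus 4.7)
The plan is to extract Lemma~\ref{lemma:saturation} directly from the stronger supersaturation/stability theorem of Lev, {\L}uczak, and Schoen, which provides a quantitative dichotomy for subsets $A \subseteq G$ whose density exceeds $1/3$: either $A$ is close, in symmetric difference, to a sum-free subset of $G$ (stability), or $A$ contains $\Omega(|G|^2)$ Schur triples with an explicit polynomial dependence on the excess density $|A|/|G| - 1/3$ (supersaturation). Case $(a)$ will come from the stability branch, and case $(b)$ from the supersaturation branch.

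The argument would proceed in two steps. First, I would apply the LLS theorem to $A$ with excess parameter $\eps$. If the stability branch triggers, LLS produce a sum-free set $S \subseteq G$ close to $A$; taking $A' := A \cap S$ then gives a sum-free subset of $A$ with $|A \setminus A'| \le \eps|G|$, yielding $(a)$ (after possibly a harmless rescaling of $\eps$ by an absolute constant, which is absorbed by re-parameterising the lemma). Second, if the supersaturation branch triggers, LLS guarantee at least $C\eps^c|G|^2$ Schur triples for some explicit constants $C,c$. The exponent $3$ in the target $\eps^3|G|^2/27$ reflects the heuristic that a uniformly random set of density $1/3+\eps$ in $G$ contains roughly $(1/3+\eps)^3|G|^2$ ordered Schur triples, which also explains the prefactor $1/27$; verifying $c = 3$ and $C \ge 1/27$ from the LLS bound then closes the case.

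The main obstacle --- and where ``rather straightforward'' hides some accounting --- is matching the two LLS constants against the clean thresholds $\eps|G|$ and $\eps^3|G|^2/27$ in the statement. In particular, one must check that the LLS stability loss is at most $\eps|G|$ at excess parameter $\eps$, and that below this threshold their Schur-triple count is at least $\eps^3|G|^2/27$. Any slack is absorbed by the generous factor $27$ and by the monotonicity of the conclusion in $\eps$ (if the lemma holds for some $\eps' < \eps$ up to constants, it holds for $\eps$ after adjustment). A direct hypergraph/Tur\'an argument on the Schur-triple $3$-uniform hypergraph is also tempting but appears to miss the additive structure, yielding only bounds linear in $|G|$ for the triple count; the use of the LLS structural theorem is what promotes the bound to the quadratic $|G|^2$-scaling required here.
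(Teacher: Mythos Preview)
The paper does not give a proof of this lemma; it is stated with attribution to Lev, {\L}uczak, and Schoen and described as ``a rather straightforward corollary of a much stronger result'' from~\cite{LLS}, with the additional remark that it also follows from the arithmetic removal lemma of Green~\cite{G2} and Kr{\'a}l--Serra--Vena~\cite{KSV}. Your plan---apply the LLS stability/supersaturation dichotomy and read off the two alternatives---is therefore exactly the derivation the paper has in mind, and there is nothing further to compare.

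Two small comments. First, your heuristic for the constants is slightly off: a random set of density $1/3+\eps$ has roughly $(1/3+\eps)^3|G|^2 \approx |G|^2/27$ ordered Schur triples, not $\eps^3|G|^2/27$; the cubic dependence on $\eps$ comes instead from the three-step removal/averaging that underlies the quantitative LLS bound, not from the density cube. Second, your worry about matching constants is the only real content of the derivation, and it cannot be resolved at the level of your sketch without invoking the precise LLS statement; since the paper is content to cite rather than reprove, this is not a defect in your proposal so much as a reflection of the fact that the lemma is being imported wholesale.
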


We remark that a more general statement, the so-called Removal Lemma for groups, was proved by Green~\cite{G2} (for Abelian groups) and Kr{\'a}l, Serra, and Vena~\cite{KSV} (for arbitrary groups). Lemmas~\ref{lemma:stability} and \ref{lemma:saturation} immediately imply the following.

\begin{cor}\label{cor:saturation}
  Let $G$ be a group of type $I(q)$, where $q$ is a prime $q \equiv 2 \pmod 3$, and let $0 < \eps < \frac{1}{9q^2+9q}$. Let $A \subseteq G$ and suppose that 
  \[
  |A| \, \ge \, \big( \mu(G) - \eps \big) |G|.
  \]
  Then one of the following holds: 
  \begin{itemize}
  \item[$(a)$] $|A \setminus A'| \le \eps|G|$ for some $A' \in \SF(G)$.\smallskip
  \item[$(b)$] $A$ contains at least $\eps^3|G|^2/27$ Schur triples.
  \end{itemize}
\end{cor}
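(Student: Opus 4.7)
The plan is to combine the two preceding lemmas in the natural way: use Lemma~\ref{lemma:saturation} first to pick up either the Schur triple bound (case $(b)$) or a sum-free subset $A'' \subseteq A$ approximating $A$; then use Lemma~\ref{lemma:stability} to upgrade $A''$ to a genuine maximum-size sum-free $A' \in \SF(G)$ containing $A''$, and conclude that $A' \supseteq A''$ forces $|A \setminus A'| \leq |A \setminus A''| \leq \eps|G|$.

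In more detail, I will first observe that for a type $I(q)$ group we have $\mu(G) = 1/3 + 1/(3q)$, so the hypothesis $|A| \geq (\mu(G) - \eps)|G|$ gives $|A| \geq (1/3 + 1/(3q) - \eps)|G| \geq (1/3 + \eps)|G|$ provided $2\eps \leq 1/(3q)$. A trivial check shows that our assumption $\eps < 1/(9q^2+9q) = 1/(9q(q+1))$ implies $\eps < 1/(6q)$ (since $9q(q+1) \geq 6q$ for $q \geq 1$), so Lemma~\ref{lemma:saturation} applies with the same $\eps$. If conclusion $(b)$ of that lemma holds, we are done. Otherwise we obtain a sum-free $A'' \subseteq A$ with $|A \setminus A''| \leq \eps|G|$, and hence
\[
|A''| \; \geq \; |A| - \eps |G| \; \geq \; \bigl(\mu(G) - 2\eps\bigr)|G|.
\]

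The second step is to verify that $A''$ satisfies the hypothesis of Lemma~\ref{lemma:stability}, that is, $2\eps \leq 1/(3q^2+3q)$. This again follows from $\eps < 1/(9q^2+9q)$ since $2/9 < 1/3$. Lemma~\ref{lemma:stability} then yields some $A' \in \SF(G)$ with $A'' \subseteq A'$. Because $A'' \subseteq A'$, every element of $A$ missing from $A'$ is also missing from $A''$, so $A \setminus A' \subseteq A \setminus A''$ and therefore $|A \setminus A'| \leq \eps|G|$, giving case $(a)$.

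There is essentially no real obstacle here beyond the small numerical check: the whole argument is just tracking that the constant $1/(9q^2+9q)$ in the hypothesis is small enough to survive the loss of a factor of $2$ when passing through Lemma~\ref{lemma:saturation} to Lemma~\ref{lemma:stability}. The choice of the constant $1/(9q^2+9q)$ (roughly a third of the $1/(3q^2+3q)$ appearing in Lemma~\ref{lemma:stability}) is tuned precisely to make both inequalities above hold simultaneously, which is why the authors describe the corollary as an immediate consequence of the two lemmas.
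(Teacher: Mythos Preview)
Your proof is correct and follows essentially the same approach as the paper: apply Lemma~\ref{lemma:saturation} (using $\mu(G)=1/3+1/(3q)$ and the bound on $\eps$ to get $|A|\ge(1/3+\eps)|G|$), and in the non-Schur-triple case feed the resulting sum-free $A''$ into Lemma~\ref{lemma:stability} after checking $2\eps<1/(3q^2+3q)$. Your numerical verifications are, if anything, more explicit than the paper's.
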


\begin{proof}
  Suppose first that $|A \setminus A''| > \eps|G|$ for every sum-free $A'' \subseteq A$. By Theorem~\ref{thm:muG} and our choice of $\eps$, we have $|A| \ge (1/3 + \eps)|G|$. Hence, by Lemma~\ref{lemma:saturation}, $A$ contains at least $\eps^3|G|^2/27$ Schur triples, as required.

  So assume that there exists a sum-free set $A'' \subseteq A$ with $|A \setminus A''| \le \eps|G|$. Then
  \[
  |A''| \, \ge \, |A| - \eps |G| \, \ge \, \big( \mu(G) - 2\eps \big) |G| \, > \, \left( \mu(G) - \frac{1}{3q^2+3q} \right) |G|,
  \]
  and so, by Lemma~\ref{lemma:stability}, $A''$ is contained in some $A' \in \SF(G)$. But then 
  \[
  |A \setminus A'| \, \le \, |A \setminus A''| \, \le \, \eps|G|,
  \]
  and so we are done in this case as well.
\end{proof}

\subsection{Probabilistic tools}

We shall next recall three well-known probabilistic inequalities: the FKG inequality, Janson's inequality, and Chernoff's inequality.

Given an arbitrary set $X$ and a real number $p \in [0,1]$, we denote by $X_p$ the $p$-random subset of $X$, i.e., the random subset of $X$, where each element is included with probability $p$ independently of all other elements. In the proof of our main results, we shall need several bounds on the probabilities of events of the form 
\[
\bigwedge_{i \in I} (B_i \nsubseteq X_p),
\]
where $B_i$ are subsets of $X$. The first such estimate can be easily derived from the FKG Inequality (see, e.g., \cite[Section 6.3]{AS}).

\begin{FKG}
  \label{fkg}
  Suppose that $\{B_i\}_{i \in I}$ is a family of subsets of a finite set $X$ and let $p \in [0,1]$. Then
  \begin{equation}
    \label{ineq:correlation}
    \Pr\big( B_i \nsubseteq X_p \text{ for all $i \in I$} \big) \, \ge \, \prod_{i \in I} \Pr\big( B_i \nsubseteq X_p \big) \, = \, \prod_{i \in I} \big( 1 - p^{|B_i|} \big).
  \end{equation}
\end{FKG}

The second result, due to Janson (see, e.g., \cite[Section 8.1]{AS}), gives an upper bound on the probability in the left-hand side of~\eqref{ineq:correlation} expressed in terms of the intersection pattern of the sets $B_i$.

\begin{Janson}
  \label{janson}
  Suppose that $\{B_i\}_{i \in I}$ is a family of subsets of a finite set $X$ and let $p \in [0,1]$. Let
  \[
  M = \prod_{i \in I} \big( 1 - p^{|B_i|} \big), \quad
  \mu = \sum_{i \in I} p^{|B_i|}, \quad \text{and} \quad
  \Delta = \sum_{i \sim j} p^{|B_i \cup B_j|},
  \]
  where $i \sim j$ denotes the fact that $i \neq j$ and $B_i \cap B_j \neq \emptyset$. Then,
  \[
  \Pr(B_i \nsubseteq X_p \text{ for all $i \in I$}) \le \min\left\{M e^{\Delta/(2-2p)}, e^{-\mu + \Delta/2}\right\}.
  \]
  Furthermore, if $\Delta \ge \mu$, then
  \[
  \Pr(B_i \nsubseteq X_p \text{ for all $i \in I$}) \le e^{-\mu^2/(2\Delta)}.
  \]
\end{Janson}

Finally, we will need the following well-known concentration result for binomial random variables; see, e.g., \cite[Appendix A]{AS}.

\begin{Chernoff}
  \label{chernoff}
  Let $X$ be the binomial random variable with parameters $n$ and $p$. Then for every positive $a$,
  \[
  \Pr\big( X - pn > a \big) \, < \, \exp\left( -\frac{a^2}{2pn} \,+\, \frac{a^3}{2(pn)^2} \right) \quad \text{and} \quad \Pr\big( X - pn < -a \big) \,<\, \exp\left( -\frac{a^2}{2pn} \right).
  \]
\end{Chernoff}

\subsection{The Conlon-Gowers Theorem}

In order to prove the $1$-statements in Theorems~\ref{thm:sharp} and~\ref{thm:main}, i.e., that every maximum-size sum-free subset of $G_p$ is of the form $A \cap G_p$ for some $A \in \SF(G)$, provided that $p$ is sufficiently large, we need the following approximate version of this statement, which can be derived from Corollary~\ref{cor:saturation} using the transference theorems of Conlon and Gowers~\cite{CG}.
\ifarXiv
This is done in Section~\ref{CGSsec}.
\fi

\begin{thm}
  \label{thm:approx-stability-full}
  For every $\eps > 0$ and every prime $q$ with $q \equiv 2 \pmod 3$, there exists a constant $C = C(q, \eps) > 0$ such that the following holds. Let $G$ be an arbitrary $n$-element group of type $I(q)$. If 
  \[
  p \, \ge \, \frac{C}{\sqrt{n}},
  \]
  then, with high probability as $n \to \infty$, for every sum-free subset of $G_p$ with 
  \[
  |B| \, \ge \, \left( \mu(G) - \frac{1}{40q^2+40q} \right) p|G|,
  \]
  there is an $A \in \SF(G)$ such that $|B \setminus A| \le \eps pn$.
\end{thm}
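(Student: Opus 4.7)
The plan is to derive this sparse approximate-stability statement from its dense-setting counterpart (Corollary~\ref{cor:saturation}) using the transference theorem of Conlon and Gowers~\cite{CG} for the $3$-uniform hypergraph $\HH$ whose edges are the Schur triples in $G$. Note that sum-free subsets of $G$ are precisely the independent sets in $\HH$, and that $\HH$ has pair-codegree $O(1)$, since the value $z=x+y$ is determined by the pair $(x,y)$. This codegree behaviour places us in the ``threshold-$1/\sqrt{n}$'' regime of the Conlon--Gowers framework, exactly matching the hypothesis $p\ge C/\sqrt{n}$.

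Fix $\eps>0$ and a prime $q\equiv 2\pmod 3$. Set $\eta = 1/(40q^2+40q)$ and choose an auxiliary $\eta'$ with $\eta<\eta'<1/(9q^2+9q)$. Applied with parameter $\eta'$, Corollary~\ref{cor:saturation} gives the dense supersaturation-stability dichotomy: every $A\subseteq G$ with $|A|\ge(\mu(G)-\eta')|G|$ either satisfies $|A\setminus A'|\le \eta'|G|$ for some $A'\in\SF(G)$, or spans at least $(\eta')^3|G|^2/27$ edges of $\HH$. Crucially, Corollary~\ref{cor:SFG} ensures that the family of structured sets $\SF(G)$ has size at most~$n$, so it is of polynomial size, which is what allows a union bound to be absorbed into the transference.

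Feed this dense input into the Conlon--Gowers transference theorem: for $p \ge C(\eps,q)/\sqrt{n}$ with $C$ sufficiently large, with high probability the analogous dichotomy holds in $G_p$. That is, every $B\subseteq G_p$ with $|B|\ge(\mu(G)-\eta)p|G|$ either satisfies $|B\setminus A'|\le \eps pn$ for some $A'\in\SF(G)$, or contains at least $c(\eps,q)\,p^3 n^2$ Schur triples. Since a sum-free $B$ contains no Schur triple at all, only the first alternative is possible, yielding the desired conclusion.

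The main technical obstacle is packaging the dense input so that the Conlon--Gowers machinery delivers the approximate-stability conclusion, rather than merely a density bound on maximum independent sets. One must view the $\eps$-neighbourhoods $\{T\subseteq G : |T\setminus A'|\le \eps pn \text{ for some } A'\in\SF(G)\}$ as the permitted ``containers'' and check that the parameters $(\eta,\eta',\eps)$, together with the codegree bound on $\HH$ and the threshold $C/\sqrt{n}$, can be lined up so that the union bound over the at most $n$ members of $\SF(G)$ still leaves the failure probability $o(1)$. Once these parameters are chosen consistently, the deduction from Corollary~\ref{cor:saturation} is essentially a direct application of the Conlon--Gowers transference black box.
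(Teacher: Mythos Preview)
Your high-level plan matches the paper's: feed Corollary~\ref{cor:saturation} into the Conlon--Gowers transference machinery, using $|\SF(G)|\le n$ (Corollary~\ref{cor:SFG}) to absorb the union bound over the structured sets. However, you gloss over two technical wrinkles that the paper has to deal with explicitly, and your closing claim that the deduction is ``essentially a direct application of the Conlon--Gowers transference black box'' hides real work.

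First, the transference theorem of~\cite{CG} carries an \emph{upper} bound $p\le 1/C$ as well as the lower bound $p\ge C/\sqrt{n}$; it does not apply directly for constant $p$. The paper therefore first proves the statement only in the range $C/\sqrt{n}\le p\le 1/C$ (Theorem~\ref{thm:approx-stability}) and then handles large $p$ by a separate chopping argument: write $G_p$ as a union of $M=O(1)$ independent copies of $G_{p/M}$, apply the bounded-$p$ version to each piece and to each pair of pieces, and argue that the approximating sets $A^i\in\SF(G)$ so obtained must coincide for almost all pieces. Second, the Schur-triple system is not a ``good system'' in the sense of~\cite{CG}: degenerate triples such as $(x,x,2x)$ and $(0,x,x)$ have repeated entries and must be removed, after which the remaining system is only approximately homogeneous (some $y\in G$ have many solutions to $2x=y$). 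The paper excises a set $D$ of at most $\sqrt{n}$ bad elements, works over $X=G\setminus(D\cup\{0\})$, and checks that the resulting almost-homogeneous system with two degrees of freedom still satisfies the Conlon--Gowers ``main assumption''. Neither issue is deep, but both require attention beyond what your sketch suggests.
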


After this work had been completed a more transparent proof of Theorem~\ref{thm:approx-stability-full} than the one originally included here was given in \cite{Sa} using a refinement of the methods of \cite{Sch}. Moreover, Theorem~\ref{thm:approx-stability-full} can be quite easily deduced from the main results of \cite{BaMoSa-ind} and \cite{SaTh}.
\ifarXiv
\else
Therefore, for the sake of brevity, we suppress the derivation of the theorem from the main results of~\cite{CG} and refer the interested reader to the original preprint~\cite{BaMoSa-preprint}.
\fi

\subsection{Notation} 

For the sake of brevity, we shall write $y$ for the set $\{y\}$.  We shall also use $\Delta$ to denote the ``correlation measure'' as in Janson's inequality, above, and $\Delta(G)$ for the maximum degree in a graph $G$. We trust that neither of these will confuse the reader.

\ifarXiv

\section{The Conlon-Gowers Method}\label{CGSsec}

In this section, we derive Theorem~\ref{thm:approx-stability-full} from the transference theorems of Conlon and Gowers~\cite{CG}. Unfortunately, for technical reasons, their methods can only be applied under the additional assumption that $p \ge C(q,\eps)^{-1}$. Therefore, we will show how to derive Theorem~\ref{thm:approx-stability-full} from the following statement, which in turn can be proved using the aforementioned transference theorems.

\begin{thm}
  \label{thm:approx-stability}
  For every $\eps \in (0,1)$, and every prime $q$ with $q \equiv 2 \pmod 3$, there exists a constant $C = C(q, \eps) > 0$ such that the following holds. Let $G$ be an arbitrary $n$-element group of type $I(q)$. If 
  \[
  \frac{C}{\sqrt{n}} \, \le \, p \, \le \, \frac{1}{C},
  \]
  then a.a.s.~for every sum-free subset of $G_p$ with 
  \[
  |B| \, \ge \, \left( \mu(G) - \frac{1}{10q^2+10q} \right) p|G|,
  \]
  there is an $A \in \SF(G)$ such that $|B \setminus A| \le \eps pn$.
\end{thm}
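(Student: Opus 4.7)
The plan is to combine the deterministic removal-stability alternative of Corollary~\ref{cor:saturation} with the Conlon-Gowers transference principle~\cite{CG}. View the Schur triples of $G$ as the edges of a 3-uniform hypergraph $\HH$ on vertex set $G$. This hypergraph is translation-invariant and each vertex lies in $\Theta(|G|)$ Schur triples; standard Fourier estimates give the pseudorandomness needed by the Conlon-Gowers machinery. A sum-free subset of $X\subseteq G$ is exactly an independent set in $\HH[X]$, so Theorem~\ref{thm:approx-stability} is a statement about independent sets in $\HH[G_p]$.

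Set $\delta := \frac{1}{10q^2+10q}$, choose $\eps_0 := \min\{\eps/2,\,\delta\}$, and let $\alpha := \eps_0^3/27$. Corollary~\ref{cor:saturation} with parameter $\eps_0$ (valid since $\eps_0 < \frac{1}{9q^2+9q}$) provides the deterministic dichotomy that serves as the dense input to transference: every $A' \subseteq G$ with $|A'| \ge (\mu(G) - \eps_0)|G|$ either satisfies $|A' \setminus A| \le \eps_0 |G|$ for some $A \in \SF(G)$, or else contains at least $\alpha|G|^2$ Schur triples. The Conlon-Gowers transference theorem converts this alternative to the sparse random setting: there exists $C = C(q,\eps)$ such that whenever $C/\sqrt{n}\le p\le 1/C$, with high probability every $B \subseteq G_p$ with $|B| \ge (\mu(G)-\delta)p|G|$ satisfies either
\begin{itemize}
\item[(i)] $|B\setminus A|\le \eps pn$ for some $A \in \SF(G)$, or
\item[(ii)] $B$ contains a Schur triple.
\end{itemize}
Since the theorem postulates that $B$ is sum-free, (ii) is ruled out and (i) yields the required conclusion. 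The union bound over the choice of $A \in \SF(G)$ is affordable because $|\SF(G)| \le n$ by Corollary~\ref{cor:SFG}.

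The main obstacle is realizing the last step, i.e.\ transferring not just a single density statement but the full stability/removal dichotomy. In the Conlon-Gowers framework one associates to each sum-free $B \subseteq G_p$ a \emph{dense model} $A' \subseteq G$ whose density in $G$ essentially matches that of $B$ in $G_p$ and which, in a precise weighted sense, inherits the approximate independence of $B$ in $\HH$. Corollary~\ref{cor:saturation} is then applied to $A'$, and the resulting structural conclusion is pulled back to $B$ via the sparse-to-dense correspondence. The technical assumption $p \le 1/C$ arises from the sparse regularity lemma underlying this model-construction step, which is why the unrestricted Theorem~\ref{thm:approx-stability-full} must be deduced from Theorem~\ref{thm:approx-stability} by a separate argument that handles the range of constant $p$.
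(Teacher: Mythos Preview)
Your outline matches the paper's approach: feed Corollary~\ref{cor:saturation} into the Conlon--Gowers transference, build a dense model $B'\subseteq G$ of the sum-free set $B\subseteq G_p$, apply the corollary to $B'$, and pull the resulting structural conclusion back to $B$. Two technical points that the paper's sketch handles explicitly are glossed over in your proposal.

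First, the Schur-triple system is \emph{not} a ``good system'' in the sense required by~\cite{CG}: triples of the form $(x,x,2x)$, $(x,0,x)$, $(0,x,x)$ have repeated entries, and once these are removed the system is only approximately homogeneous because $|\{x:2x=y\}|$ can vary wildly with $y$. The paper repairs this by deleting the zero element together with a set $D$ of at most $\sqrt{n}$ elements having many square-root representations, and then checks that the Conlon--Gowers ``main assumption'' still holds for the resulting almost-homogeneous system. Your appeal to ``standard Fourier estimates'' does not address this issue; no Fourier analysis is used.

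Second, the bound $|\SF(G)|\le n$ from Corollary~\ref{cor:SFG} is not used as a probabilistic union bound. Rather, it bounds the size of the collection $\V=\SF(G)\cup\{G\}$ of \emph{test sets} fed into the transference theorem, which guarantees that $\big|\,p^{-1}|B\cap V|-|B'\cap V|\,\big|\le \eps|G|$ simultaneously for every $V\in\V$. It is precisely this simultaneous control, applied with $V=A$ and $V=G$, that lets one convert $|B'\setminus A|\le O(\eps^{1/3})\,n$ into $|B\setminus A|\le O(\eps^{1/3})\,pn$. Your last paragraph describes this pull-back correctly, so the ``union bound'' remark appears to be a slip rather than a genuine misconception.
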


In order to deduce Theorem~\ref{thm:approx-stability-full} from Theorem~\ref{thm:approx-stability}, we simply chop $p$ into sufficiently small pieces, apply Theorem~\ref{thm:approx-stability} to each piece, and then show that we obtain the same set $A \in \SF(G)$ for (almost) all of the pieces. 

\begin{proof}[Proof of Theorem~\ref{thm:approx-stability-full}]
  Clearly, it suffices to consider the case $p \ge C(q, \eps)^{-1}$. To this end, fix some $p \in (0,1]$, let $c_q = 1 / (10q^2 + 10q)$, let $\eps' = \eps c_q/4$, and let $M$ be the least positive integer satisfying $2p/M \le 1/C'$, where $C' = C(q, \eps')$ is the constant defined in the statement of Theorem~\ref{thm:approx-stability}. Assign to each $x \in G$ a number $t(x) \in [0,1]$ uniformly at random and for all $i \in [M]$, let
  \[
  G^i = \{ x \in G \colon (i-1)p/M \le t(x) \le ip/M \}.
  \]
  It is not hard to see that $G^1 \cup \ldots \cup G^M$ has the same distribution as $G_p$, that $G^i$ has the same distribution as $G_{p/M}$ for every $i \in [M]$, and that $G^i \cup G^j$ has the same distribution as $G_{2p/M}$ for all distinct $i, j \in [M]$. Since $M = O(1)$, a.a.s.\ the following statements hold simultaneously:
  \begin{enumerate}
  \item[$(i)$]
    For all $i \in [M]$, $G^i$ satisfies the assertion of Theorem~\ref{thm:approx-stability}.
  \item[$(ii)$]
    For all distinct $i, j \in [M]$, $G^i \cup G^j$ satisfies the assertion of Theorem~\ref{thm:approx-stability}.
  \item[$(iii)$]
    For all $i \in [M]$ and distinct $A, A' \in \SF(G)$, $|G^i \cap A| \le (\mu(G) + \eps')pn/M$ and
    \[
    |G^i \cap A \cap A'| \, \le \, \left( \mu(G) - \frac{1}{2q^2} \right) \frac{pn}{M}.
    \]
  \end{enumerate}
  To see that $(iii)$ holds, observe that $|A \cap A'| \le (\mu(G) - 1/q^2)n$ since, by Theorem~\ref{thm:SFG-structure}, $A \setminus A'$ is a union of cosets of some subgroup $H \cap H'$, where $H$ and $H'$ are subgroups of index $q$ (and hence $H \cap H'$ has index $q$ or $q^2$). Now, since $p = \Theta(1)$ and $|\SF(G)| \le n$, by Corollary~\ref{cor:SFG}, the result follows by Chernoff's inequality.

Now, let $B$ be a sum-free subset of $G^1 \cup \ldots \cup G^M$ with $|B| \ge (\mu(G) - \eps')pn$. For each $i \in [M]$, let $B^i = G^i \cap B$ and let
  \[
  I \, = \, \left\{ i \in [M] \,\colon\, |B^i| \ge \big( \mu(G) - c_q \big) \frac{pn}{M} \right\}.
  \]
  Suppose that $i \in I$. Since $B^i \subset B$ is sum-free, $(i)$ implies that $|B^i \setminus A^i| \le \eps'pn/M$ for some $A^i \in \SF(G)$. Moreover, for every distinct $i, j \in I$, the set $B^i \cup B^j \subset B$ is also sum-free and $|B^i \cup B^j| = |B^i| + |B^j| \ge (\mu(G)-c_q)2pn/M$. Thus, by $(ii)$, we have $|(B^i \cup B^j) \setminus A| \le 2\eps'pn/M$ for some $A \in \SF(G)$. In particular, it follows from $(iii)$ that $A^i = A^j = A$, since otherwise
  \[
  |G^i \cap A \cap A^i| \, \ge \, |B^i \cap A \cap A^i| \, \ge \, |B^i| - |B^i \setminus A^i| - |B^i \setminus A| \, \ge \, \Big( \mu(G) - c_q - 3\eps' \Big)\frac{pn}{M},
  \]
  which contradicts $(iii)$, since $c_q + 3\varepsilon' \le 1/(2q^2)$. 
  
Let $A$ be the unique maximum-size sum-free subset of $G$ satisfying $A^i = A$ for all $i \in I$; we claim that $|B \setminus A| \le \eps pn$. Indeed, by the definition of $A^i$ and $(iii)$,
  \[
  |B^i| \le |G^i \cap A| + |B^i \setminus A| \le (\mu(G) + 2\eps')pn/M
  \]
  and hence
  \[
  \big( \mu(G) - \eps' \big) pn \, \le \, |B| \, \le \, |I| \Big( \mu(G) + 2\eps' \Big) \frac{pn}{M} + \big( M - |I| \big) \Big( \mu(G) - c_q \Big)\frac{pn}{M},
  \]
  which implies that $|I| \ge (1 - \eps)M$. We conclude that
  \[
  |B \setminus A| \, \le \, \sum_{i \in I} |B^i \setminus A| + \sum_{i \not\in I} |B^i| \, \le \, |I| \cdot \frac{\eps'pn}{M} + \big( M - |I| \big) \mu(G) \cdot \frac{pn}{M} \, \le \, \eps pn,
  \]
as required.
\end{proof}

In the remainder of this section, we shall sketch the proof of Theorem~\ref{thm:approx-stability}. Let $G_p$ be the $p$-random subset of $G$. Following~\cite{CG}, we define the associated measure of $G_p$, denoted $\mu$, by $\mu = p^{-1} \cdot \chi_{G_p}$, i.e., $\mu(x) = p^{-1}$ if $x \in G_p$ and $\mu(x) = 0$ otherwise. Let $S$ be the collection of all Schur triples in $G$ and note that $|S| = \Theta(n^2)$. Moreover, let $\V = \SF(G) \cup \{G\}$ and note that (by Corollary~\ref{cor:SFG}) $|\V| \le n+1$. The transference theorem proved in~\cite{CG} asserts that a.a.s.~for every function $f \colon G \to \RR$ with $0 \le f \le \mu$, there is a function $g \colon G \to [0,1]$ such that
\begin{equation}
  \label{ineq:trans-triples}
  \Ex_{s \in S} f(s_1)f(s_2)f(s_3) \ge \Ex_{s \in S} g(s_1)g(s_2)g(s_3) - \eps
\end{equation}
and
\begin{equation}
  \label{ineq:trans-traces}
  \left| \sum_{x \in V} f(x) - \sum_{x \in V} g(x) \right| \le \eps |G| \quad \text{for all $V \in \V$.}
\end{equation}
Moreover, we may assume that $g$ takes values only in $\{0,1\}$, i.e., $g$ is the characteristic function of some subset of $G$; see, e.g., \cite[Corollary 9.7]{CG}. In particular, if we let $f$ to be $p^{-1}$ times the characteristic function of some subset of $G_p$, then we will see that for every $B \subseteq G_p$, there exists a $B' \subseteq G$ such that $|B' \cap V| \approx p|B \cap V|$ for every $V \in \V$ and the number of Schur triples in $B'$ is by at most $\eps n^2$ larger than $p^{-3}$ times the number of Schur triples in $B$. Hence, if $B \subseteq G_p$ is sum-free and $|B| \ge (\mu(G) - c_q)pn$ for some small positive constant $c_q$, then the corresponding set $B'$ has at least $(\mu(G) - c_q - \eps)n$ elements and at most $\eps n^2$ Schur triples. By Corollary~\ref{cor:saturation}, there is an $A \in \SF(G)$ such that $|B' \setminus A| \le 3\eps^{1/3}n$. Finally, since $A \in \V$, we can conclude that $|B \setminus A| = O(\eps^{1/3}pn)$.

To complete the proof of Theorem~\ref{thm:approx-stability}, we still need to argue how the results of Conlon and Gowers~\cite{CG} imply that a.a.s.~every function $f$ with $0 \le f \le \mu$ can be approximated in the above sense by some $g \colon G \to \{0,1\}$. This implication would be a direct consequence of~\cite[Corollary 9.7]{CG} if $S$, the collection of Schur triples in $G$, was a so called {\em good system}, i.e., if $S$ satisfied the following conditions:
\begin{enumerate}
  \item[$(i)$]
    No sequence in $S$ has repeated elements.
  \item[$(ii)$]
    $S$ is {\em homogeneous}, i.e., for every $j \in [3]$ and every $x \in G$, the set $\{s \in S \colon s_j = x\}$, denoted $S_j(x)$, has the same size.
  \item[$(iii)$]
    $S$ has {\em two degrees of freedom}, i.e., whenever some $s, t \in S$ satisfy $s_i = t_i$ for two distinct indices $i$, then $s = t$.
\end{enumerate}
Indeed, if $S$ satisfied $(i)$--$(iii)$, then we could simply apply~\cite[Corollary 9.7]{CG}, since for families with two degrees of freedom, the threshold for $p$ required for the transference theorem is $n^{-\gamma/2}$, where $\gamma$ is defined by $|S_j(x)| = n^\gamma$. Since in our case $\gamma = 1$, the threshold is at $n^{-1/2}$.

Unfortunately, in our setting some sequences in $S$ have repeated elements (Schur triples of the form $(x,x,2x)$, $(x,0,x)$, or $(0,x,x)$) and when we remove them, the new set $S$ is no longer homogeneous as, e.g., some $y \in G$ satisfy $y = x + x$ for many different $x$. To overcome this problem, let
\[
D = \big\{ y \in G \colon y = x + x \text{ for more than $\sqrt{n}$ different $x$} \big\}.
\]
Since there are only $n$ sums of the form $x+x$, it follows that $|D| \le \sqrt{n}$. Next, we let $X = G \setminus (D \cup \{0\})$ and, instead of working with the collection of all Schur triples in $G$, we define the new set $S$ as follows:
\[
S = \big\{ (x,y,x+y) \in X^3 \colon x \neq y \big\}.
\]
Since $0 \not\in X$, it follows that no triple in $S$ has repeated elements. Moreover, for all $j \in [3]$ and every $x \in X$,
\[
n - 2\sqrt{n} - 1 \le |S_j(x)| \le n,
\]
i.e., the new set $S$ satisfies $|S_j(x)| = (1+o(1))n$ for all $j \in [3]$ and $x \in X$.

Before being able to state the version of the transference theorem that we are actually going to use, we need to do some preparation. Recall that for every $j \in [3]$ and $x \in X$, we defined $S_j(x) = \{s \in S \colon s_j = x\}$. Following~\cite{CG}, given functions $h_1, h_2, h_3 \colon X \to \RR$ and $j \in [3]$, we define their {\em $j$th convolution} $*_j(h_1, h_2, h_3)$ by
\begin{align*}
  *_j(h_1, h_2, h_3)(x) & = \frac{|X|}{|S|}\sum_{s \in S_j(x)} h_1(s_1) \ldots h_{j-1}(s_{j-1}) h_{j+1}(s_{j+1})h_3(s_3) \\
  & = \frac{|X||S_j(x)|}{|S|}\Ex_{s \in S_j(x)} h_1(s_1) \ldots h_{j-1}(s_{j-1}) h_{j+1}(s_{j+1})h_3(s_3).
\end{align*}
Moreover, we define an inner product of real-valued functions on $X$ by
\[
\langle f, g \rangle = \frac{1}{|X|}\sum_{x \in X}f(x)g(x).
\]
A crucial observation is that our (slightly modified in comparison with \cite[Definition 3.2]{CG}) definition of the convolutions $*_j$ guarantees that for each $j$,
\[
\langle h_j, *_j(h_1, h_2, h_3) \rangle = \frac{1}{|S|}\sum_{s \in S}h_1(s_1)h_2(s_2)h_3(s_3) = \Ex_{s \in S} h_1(s_1)h_2(s_2)h_3(s_3).
\]
This allows us to write
\begin{equation}
  \label{eq:f-g-triples}
  \Ex_{s \in S} f(s_1)f(s_2)f(s_3) - \Ex_{s \in S} g(s_1)g(s_2)g(s_3) = \sum_{j = 1}^3 \langle f-g, *_j(g, \ldots, g, f, \ldots, f) \rangle
\end{equation}
and
\begin{equation}
  \label{eq:f-g-traces}
  \sum_{x \in V} f(x) - \sum_{x \in V} g(x) = \langle f-g, \chi_V \rangle \cdot |X| \quad \text{for every $V \subseteq X$}.
\end{equation}
One of the main ideas in~\cite{CG} is to use~\eqref{eq:f-g-triples} and \eqref{eq:f-g-traces} to show that, if the inner products of two functions $f$ and $g$ with elements of some large class of functions on $X$ do not differ much from one another, then $f$ and $g$ will satisfy~\eqref{ineq:trans-triples} and~\eqref{ineq:trans-traces}. Due to various technical complications that would arise in the most straightforward approach, i.e., letting this large class of functions to contain all the convolutions $*_j$, Conlon and Gowers work with so called {\em capped convolutions} $\circ_j$, defined by $\circ_j(h_1, h_2, h_3) = \min\{*_j(h_1, h_2, h_3), 2\}$. They then define the class of {\em basic anti-uniform functions}, which are functions of the form $\circ_j(g_1, \ldots, g_j, f_{j+1}, \ldots, f_3)$, where $0 \le g_i \le 1$, $0 \le f_i \le \mu$, and $\mu$ is the associated measure of the $p$-random subset coming from some finite sequence, or of the form $\chi_V$ for some $V \in \V$ (the characteristic functions are included in order to guarantee that \eqref{ineq:trans-traces} will also hold).

After all these preparations, we can finally state the transference theorem of Conlon and Gowers~\cite{CG} in the version which is best suited for our needs.

\begin{main_ass}
Let $d,m \in \N$ and $\eta,\lambda > 0$ be given. Let $U_1, \ldots, U_m$ be independent $p$-random subsets of $X$, and let $\mu_1, \ldots, \mu_m$ be their associated measures. If $p \ge p_0$, then the following properties hold with probability $1 - n^{-C'}$, where $C'$ is a large enough constant:
  \begin{enumerate}
    \item[0.]
      $\| \mu_i \|_1 = 1 + o(1)$ for each $1 \le i \le m$.\smallskip
    \item[1.]
    If $1 \le i_1 < i_2 < i_3 \le m$, and $j \in [3]$, then
      \[
      \| *_j(\mu_{i_1}, \mu_{i_2}, \mu_{i_3}) - \circ_j(\mu_{i_1}, \mu_{i_2}, \mu_{i_3})\|_1 \le \eta.
      \]
    \item[2.]
    If $j \in \{2, 3\}$ and $1 \le i_{j+1} <  \ldots < i_3 \le m$, then
      \[
      \| *_j(1, \ldots, 1, \mu_{i_{j+1}}, \ldots, \mu_{i_3})\|_\infty \le 2.
      \]
    \item[3.]
      $\left|\langle \mu - 1, \xi \rangle\right| < \lambda$ if $\xi$ is a product of at most $d$ basic anti-uniform functions.
  \end{enumerate}
\end{main_ass}

\begin{thm}[{\cite[Theorems~4.10 and 9.3]{CG}}]
Let $\eps > 0$. Let $X$ be a finite set, let $S$ be a collection of ordered subsets of $X$ of size $3$, and let $\V$ be a collection of subsets of $X$. Then there exist constants $C, \eta, \lambda > 0$ and $d, m \in \N$ such that the following holds.  

Let $p_0$ be such that the main assumption holds for the triple $(S, p_0, \V)$ and the constants $\eta$, $\lambda$, $d$, and $m$. Let $U$ be the $p$-random subset of $X$, where $Cp_0 \le p \le 1/C$, and let $\mu$ be the associated measure of $U$. 

Then, with probability $1 - o(1)$, for every function $f \colon X \to \RR$ with $0 \le f \le \mu$, there exists a function $g \colon X \to \{0, 1\}$ such that~\eqref{ineq:trans-triples} and~\eqref{ineq:trans-traces} hold.
\end{thm}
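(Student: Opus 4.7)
The plan is to follow the two-stage strategy underlying the Conlon--Gowers transference machinery: first produce a real-valued ``dense model'' $\tilde g \colon X \to [0,1]$ approximating $f$ in the relevant dual sense, and then round $\tilde g$ to a $\{0,1\}$-valued function $g$ that still satisfies \eqref{ineq:trans-triples} and \eqref{ineq:trans-traces}.

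For the dense model, I would run an energy-increment / Hahn--Banach iteration. Start with $\tilde g_0 \equiv 0$. At stage $k$, if $\tilde g_k$ fails \eqref{ineq:trans-triples} or \eqref{ineq:trans-traces}, then by the identities \eqref{eq:f-g-triples}--\eqref{eq:f-g-traces} some inner product
\[
\langle f - \tilde g_k,\; *_j(\tilde g_k,\ldots,\tilde g_k,f,\ldots,f)\rangle \quad \text{or} \quad \langle f - \tilde g_k,\, \chi_V\rangle
\]
exceeds a threshold depending on $\eps$. Replace the uncapped convolution $*_j$ by its capped version $\circ_j$; since $0 \le f \le \mu$, Property~1 of the main assumption bounds the cost of this replacement by $O(\eta)$. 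The resulting witness $\xi_k$ is a basic anti-uniform function. Define $\tilde g_{k+1}$ by adding a small multiple of $\xi_k$ to $\tilde g_k$ and truncating back into $[0,1]$. A standard energy-increment argument produces a constant-sized increment in $\|\tilde g_k\|_2^2$, so the iteration halts after $K = O(1/\eps^2)$ steps; fixing $d \ge 3K$ in advance ensures that Property~3 applies to every product of basic anti-uniform functions that arises. Properties~2 and~3 together keep $\tilde g_k$ bounded in $L^\infty$ by $1 + o(1)$ throughout, and allow inner products against $f \le \mu$ to be compared to the same inner products against the corresponding function with $\mu$ replaced by the constant $1$, up to error $O(\lambda)$ that is absorbed into $\eps$ provided $\lambda$ is chosen sufficiently small in terms of $\eps$.

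For the rounding stage, define $g \colon X \to \{0,1\}$ by independent Bernoulli sampling: set $g(x) = 1$ with probability $\tilde g(x)$, independently across $x \in X$. Then for each $V \in \V$, $\Ex\bigl[\sum_{x \in V} g(x)\bigr] = \sum_{x \in V} \tilde g(x)$, and Chernoff's inequality bounds the deviation by $o(|X|)$ with probability $1 - |\V|^{-2}$, so a union bound over $\V$ keeps \eqref{ineq:trans-traces} intact. Likewise $\Ex_{s \in S}\bigl[g(s_1)g(s_2)g(s_3)\bigr]$ matches $\Ex_{s \in S}\bigl[\tilde g(s_1)\tilde g(s_2)\tilde g(s_3)\bigr]$ in expectation, and the two-degrees-of-freedom / homogeneity structure of $S$ together with $|S_j(x)| = (1+o(1))|X|$ (established just before the theorem) makes the variance $o(1)$ by a direct second-moment computation, so \eqref{ineq:trans-triples} survives with positive probability; in particular some valid $g$ exists.

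The main obstacle is the bookkeeping in the iteration: the constants must be chosen in the order $\eps \Rightarrow d \Rightarrow \eta,\lambda \Rightarrow m \Rightarrow C$, and the witnesses $\xi_k$ extracted at each step must be certified as products of at most $d$ basic anti-uniform functions so that Property~3 controls $\langle \mu - 1, \xi_k \rangle$. Once the constant hierarchy is set up, the passage from uncapped to capped convolutions via Property~1 and the concentration arguments used to round $\tilde g$ to $\{0,1\}$ are essentially routine.
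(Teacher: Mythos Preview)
The paper does not prove this statement: it is quoted from Conlon--Gowers~\cite[Theorems~4.10 and~9.3]{CG} and used as a black box. The surrounding section only verifies that the \emph{main assumption} is satisfied for the particular system of Schur triples, so there is no in-paper proof to compare your proposal against.

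As a sketch of the Conlon--Gowers argument itself, your two-stage outline (dense model, then random rounding) is correct in spirit, and the rounding step is essentially what~\cite[Corollary~9.7]{CG} does. But your account of the dense-model step misassigns the roles of the hypotheses. Property~1 compares $*_j$ and $\circ_j$ only when the three arguments are \emph{independent} associated measures $\mu_{i_1},\mu_{i_2},\mu_{i_3}$; it says nothing directly about $*_j(\tilde g,\ldots,\tilde g,f,\ldots,f)$ with $f \le \mu$ for a single $\mu$. Bridging that gap is precisely why the parameter $m$ and the $m$ independent copies $U_1,\ldots,U_m$ appear in the main assumption, and you never invoke them. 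Similarly, Property~2 is not there to keep $\tilde g_k$ bounded --- that is automatic from your truncation --- but to ensure that $*_j$ already coincides with $\circ_j$ once enough of its arguments lie in $[0,1]$. Finally, in~\cite{CG} the dense model is obtained via a direct Hahn--Banach separation rather than the boosting iteration you describe; the two routes are known to be interchangeable, but in the iterative version the products of up to $d$ basic anti-uniform functions arise not as the witnesses $\xi_k$ themselves (each $\xi_k$ is a single basic anti-uniform function) but in the dual analysis certifying that the set of possible $\tilde g$'s is nonempty, and the potential that increases monotonically is not $\|\tilde g_k\|_2^2$ once truncation is in play.
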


Thus, in order to deduce that the conclusion we require, it suffices to check that the main assumption holds for our choice of $X$, $S$, $\V$ and $p_0  = C / \sqrt{n}$, for a large enough constant $C$. Indeed, Property 0 easily follows from Chernoff's inequality, as it simply says that, with probability at least $1 - n^{-C'}$, the $p$-random subset of $X$ has $(1+o(1))p|X|$ elements. Moreover, it is shown in~\cite{CG} that Property~3 is implied by Properties~0--2, together with the fact that $|\V| \le n+1 = 2^{o(p|X|)}$, so we only need to argue that our system satisfies Properties~1 and~2. 

This is done in~\cite{CG} in the case when $S$ is a homogeneous system with two degrees of freedom. In our case $S$ does have two degrees of freedom but we only know that it is `almost homogeneous', i.e., that $|S_j(x)| = (1+o(1))n$ for every $j$ and $x$. Fortunately, it is not hard (though somewhat tedious) to check that this is a sufficiently strong assumption to keep the arguments of~\cite{CG} valid; see~\cite[Lemma 7.2]{CG} and \cite[Lemma 8.4]{CG} plus the discussion below it.

Finally, let us mention briefly where the lower bound $p \ge C / \sqrt{n}$ in Theorem~\ref{thm:approx-stability} comes from. Following~\cite{CG}, for each $x \in X$ let
\[
K(x) \, = \, \big\{ y \in X \,:\, S_1(x) \cap S_3(y) \neq \emptyset \big\},
\]
and for each $y \in K(x)$, let 
\[
W(x,y) \,=\, \Ex_{s \in S_1(x) \cap S_3(y)} \mu(s_2),
\]
where $\mu$ is the associated measure of some $p$-random subset of $X$. A crucial assumption in the proof of Property~2 (see \cite[Lemma 8.4]{CG}) is that $W(x,y) \ll p|K(x)|$ for every $x,y \in X$. 

Now, since our set $S$ has two degrees of freedom, we have $|S_1(x) \cap S_3(y)| \le 1$ for every $x,y \in X$, and therefore $|K(x)| = |S_1(x)| = (1+o(1))n$. Furthermore, 
\[
W(x,y) \, \le \, \max_{s \in S_1(x) \cap S_3(y)} \mu(s_2) \, \le \, \frac{1}{p},
\]
so it is enough to require that $p^{-1} \ll p(1+o(1))n$, which is equivalent to $p \gg n^{-1/2}$.

\fi

\section{Abelian groups of Type I}\label{sec:proof-thm-main}

In this section, we shall prove Theorem~\ref{thm:main}. For the sake of clarity of the argument, from now on, we will assume not only that $q$ divides $|G|$, but also that $G$ is of type I($q$), i.e., that $q$ is the smallest prime $q'$ that divides $|G|$ and satisfies $q' \equiv 2 \pmod 3$. Since for each $q$, there are at most finitely many primes $q'$ smaller than $q$ that satisfy $q' \equiv 2 \pmod 3$, this assumption clearly does not affect the validity of our argument.

We begin with the $0$-statement, i.e., that if 
\[
\frac{\log n}{n} \, \ll \, p(n) \, \le \, c_q \sqrt{ \frac{\log n}{n} },
\]
then with high probability not all maximum-size sum-free subsets of $G_p$ are of the form $A \cap G_p$, with $A \in \SF(G)$. In fact, we shall prove that with high probability \emph{none} of them have this form. The proof uses Janson's inequality and the second moment method.

\begin{remark}
  If $\log n / n \ll p(n) \ll n^{-2/3}$, then the $0$-statement in Theorem~\ref{thm:main} becomes almost trivial. Indeed, since $G$ contains at most $n^2$ Schur triples, then with high probability the set $G_p$ itself is sum-free and (by Chernoff's inequality) $|A \cap G_p| < |G_p|$ for every $A \in \SF(G)$.
\end{remark}

\begin{proof}[Proof of the $0$-statement in Theorem~\ref{thm:main}]
We wish to prove that, for each prime $q \equiv 2 \pmod 3$, if $c_q$ is sufficiently small then the following holds with probability tending to $1$ as $n \to \infty$. Let $G$ be an Abelian group of type I($q$) with $|G| = n$, let
\[
\frac{\log n}{n} \, \ll \, p \; \le \; c_q \sqrt{ \frac{\log n}{n} },
\]
and let $G_p$ be a random $p$-subset of $G$. Then, for any maximum-size sum-free subset $B \subseteq G_p$, we have $|B| > |A \cap G_p|$ for every $A \in \SF(G)$. 

The proof will be by the second moment method. To be precise, we shall show that, given $A \in \SF(G)$, with high probability there exist at least $10\sqrt{p n \log n}$ elements $x \in G_p$, each chosen from a sum-free subset of a subgroup of $G$ disjoint from $A$, such that $(A \cap G_p) \cup \{x\}$ is sum-free. It will be easy to bound the expected number of such elements using the FKG inequality; to bound the variance we shall need to calculate more carefully, using Janson's inequality. The result then follows by Chernoff's inequality, since the size of the sets $\{A \cap G_p : A \in \SF(G)\}$ is highly concentrated.

\smallskip

To begin, observe that for any $A \in \SF(G)$, we have $|A| \ge n/3$ (by Theorem~\ref{thm:muG}) and 
\[
\Pr\left( \big| |A \cap G_p| - p|A| \big| > 4\sqrt{pn\log n}\right) \, \le \, n^{-4},
\]
by Chernoff's inequality (with $a = 4\sqrt{pn\log n}$) and by our lower bound on $p$. By Corollary~\ref{cor:SFG}, it follows that, with high probability,
  \begin{equation}
    \label{ineq:AcapGp}
    \big| |A \cap G_p| - p|A| \big| \le 4\sqrt{pn\log n} \quad \text{for every $A \in \SF(G)$}.
  \end{equation}
  
Throughout the rest of the proof, let $A \in \SF(G)$ be fixed. By Theorem~\ref{thm:SFG-structure}, there exists a subgroup $H$ of $G$ of index $q$ such that $A$ is a union of cosets of $H$. Since $H$ is not sum-free (it is a subgroup), it follows that $A \cap H = \emptyset$. Recall that $\mu(H) \ge 2/7$, by Theorem~\ref{thm:muG}, and let $E \in \SF(H)$ be an arbitrary maximum-size sum-free subset of $H$, so 
\[
|E| \, = \, \mu(H)|H| \, \ge \, \frac{2|G|}{7q}.
\]
We shall find in $E$ our elements $x$ such that $(A \cap G_p) \cup \{x\}$ is sum-free. To this end, for each $x \in E$ we define
  \begin{align*}
    C_1(x) & = \Big\{y \in A \colon x = y + y \Big\}, \\
    C_2(x) & = \left\{ \{y, z\} \in {A \choose 2} \colon x = y + z \right\}, \\
    C_3(x) & = \left\{ \{y, z\} \in {A \choose 2} \colon x = y - z \right\},
  \end{align*}
and let $C(x) = C_1(x) \cup C_2(x) \cup C_3(x)$. Note that $|C_2(x)| \le n/2$ and $|C_3(x)| \le n$ for every $x \in E$. 
  
We shall say that an element $x \in E$ is {\em safe} if no set in $C(x)$ is fully contained in $G_p$. Thus $x$ is safe if and only if the set $(A \cap G_p) \cup \{x\}$ is sum-free. We shall show below that, with high probability, $E \cap G_p$ contains more than $10\sqrt{pn\log n}$ safe elements. Since $E$ is sum-free, and $H$ is a subgroup, we have $E \pm E \subseteq H \setminus E$. Since $A \cap H = \emptyset$, it follows that the set 
\[
B \; := \; \big( A \cap G_p \big) \cup \big\{ x \in E \cap G_p \,\colon\, x \textup{ is safe} \big\}
\]
is sum-free. By~\eqref{ineq:AcapGp}, it will follow that $B$ is larger than $A' \cap G_p$ for every $A' \in \SF(G)$.

We begin by giving a lower bound on the expected number of safe elements in $E \cap G_p$. In fact, in order to simplify the calculation of the variance, below, we shall focus on a subset $E'' \subseteq E$, defined as follows. First, let $E' = \{x \in E \colon |C_1(x)| < 7q\}$ and note that, since the $C_1(x)$ are disjoint subsets of $A$, we have  
\[
\big| E \setminus E' \big| \cdot 7q \, \le \, |A| \, \le \, n,
\]
and so $|E'| \ge |E| - n/(7q) \ge n/(7q)$. Now, let $E''$ be an arbitrary subset of $E'$ of cardinality at least $|E'|/2$ such that there are no distinct $x, y \in E''$ with $x = -y$. Finally, let $S$ be the number of safe elements in $E''$.

For each $x \in E$, denote by $S_x$ the event that $x$ is safe. By the FKG inequality, we have
  \begin{equation}
    \label{PSx}
    \Pr(S_x) \ge (1-p)^{|C_1(x)|} (1-p^2)^{|C_2(x) \cup C_3(x)|}.
  \end{equation}
 Thus, since $p \le c_q \sqrt{ \frac{\log n}{n} }$, and using the bounds $|E''| \ge n/(14q)$, $|C_1(x)| \le 7q$, $|C_2(x)| \le n/2$, and $|C_3(x)| \le n$, we have
\begin{equation}
    \label{ineq:ExS}
    \Ex[S] \, = \, \sum_{x \in E''} \Pr(S_x) \, \ge \, |E''| (1-p)^{7q}(1-p^2)^{3n/2} \, \ge \, \frac{n}{20q} \cdot e^{-2p^2n} \, \ge \, \sqrt{n},
\end{equation}
  where the last inequality holds if $c_q$ is sufficiently small. 

The following bound on $\Var(S)$ will allow us to apply Chebychev's inequality.

\begin{claim}
  $\Var[S] = o(\Ex[S]^2)$.
\end{claim}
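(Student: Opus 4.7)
The plan is to apply the standard second-moment decomposition
\[
\Var[S] \, = \, \sum_{x \in E''} \Var[\one_{S_x}] \, + \, \sum_{x \ne y \in E''} \big( \Pr(S_x \cap S_y) - \Pr(S_x)\Pr(S_y) \big).
\]
The diagonal part is at most $\sum_x \Pr(S_x) = \Ex[S]$, which by~\eqref{ineq:ExS} is $o(\Ex[S]^2)$ since $\Ex[S] \ge \sqrt{n} \to \infty$. Hence the claim reduces to showing that $\Pr(S_x \cap S_y) \le (1 + o(1)) \Pr(S_x)\Pr(S_y)$ uniformly over $x \ne y \in E''$; summing the resulting off-diagonal bound over pairs will then give $o(1) \cdot \Ex[S]^2$, as required.

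To prove this near-independence I would combine Janson's inequality, applied to the combined set system $C(x) \cup C(y)$ (upper bound on $\Pr(S_x \cap S_y)$), with the FKG bound~\eqref{PSx} (lower bound on $\Pr(S_x)\Pr(S_y)$). Writing $\mu_x := \sum_{B \in C(x)} p^{|B|}$, both inequalities produce a common factor of $e^{-\mu_x - \mu_y}$, so what I must verify is that three exponential correction terms are all $o(1)$ uniformly in $x, y$: (a) the FKG lower-order correction $\sum_{B \in C(x)} p^{2|B|}$, coming from $1 - p^{|B|} \ge \exp(-p^{|B|} - p^{2|B|})$; (b) the within-collection Janson terms $\Delta_x := \sum_{B \ne B' \in C(x),\, B \cap B' \ne \emptyset} p^{|B \cup B'|}$ (and the analogous $\Delta_y$); and (c) the cross term $\Delta_{xy} := \sum_{B \in C(x),\, B' \in C(y),\, B \cap B' \ne \emptyset} p^{|B \cup B'|}$. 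It is also necessary to check that $C(x) \cap C(y)$ has only $O(1)$ elements, so that the Janson measure on $C(x) \cup C(y)$ differs from $\mu_x + \mu_y$ by at most $O(p) = o(1)$; here the restriction $x \ne -y$ built into the definition of $E''$ is used to rule out coincidences such as a common pair in $C_3(x) \cap C_3(y)$.

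The main task is bounding $\Delta_{xy}$, which I would carry out by a case analysis on the types $(i,j) \in \{1,2,3\}^2$ of $B \in C_i(x)$ and $B' \in C_j(y)$. In every case, fixing a shared element $a \in B \cap B'$ and invoking the defining equations ($2a=x$, $a+b=x$, or $a-b=\pm x$, and analogously for $y$) determines the remaining elements of both $B$ and $B'$ up to at most $O(1)$ choices. Combining this with $|C_1(\cdot)| \le 7q$ and $|C_2(\cdot)|, |C_3(\cdot)| \le n$ yields $\Delta_{xy} = O(qp^2 + np^3)$, and the hypothesis $p \le c_q \sqrt{\log n / n}$ makes both terms $o(1)$. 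Essentially the same counting shows $\Delta_x, \Delta_y = O(qp^2 + np^3) = o(1)$ and $\sum_{B \in C(x)} p^{2|B|} = O(qp^2 + np^4) = o(1)$, completing the proof. The only real obstacle is therefore this case analysis, which is mechanical since each of the governing linear equations has a bounded number of solutions once one free variable is fixed.
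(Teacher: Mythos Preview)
Your overall framework matches the paper's: second-moment decomposition, Janson for the upper bound on $\Pr(S_x \wedge S_y)$, FKG for the lower bound on $\Pr(S_x)\Pr(S_y)$, and the estimate $\Delta_{x,y} = O(qp^2 + np^3)$ for the correlation term. That part is fine and essentially identical to what the paper does.

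The gap is your assertion that $|C(x) \cap C(y)| = O(1)$. This holds when $|G|$ is odd, but can fail badly when $|G|$ is even. If $\{a,b\} \in C_2(x) \cap C_3(y)$ then $a+b=x$ and $a-b=y$, so $2a = x+y$ and $2b = x-y$; in an even-order group these equations can have many solutions. Concretely, since in this case $q=2$ and $A = G \setminus H$, if $(a,b)$ is a solution then so is $(a+h,b+h)$ for every $h$ in the $2$-torsion subgroup $I = \{h \in H : 2h = 0\}$, giving $|C_2(x) \cap C_3(y)|$ of order $|I|$. When $|I|$ is large (think $G = \ZZ_2^k$), the overlap contributes a factor $\exp\big(p^2 \cdot |C(x)\cap C(y)|\big)$ to the Janson/FKG ratio that is \emph{not} $o(1)$; it can be polynomial in $n$. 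So $\Pr(S_x \wedge S_y) \le (1+o(1))\Pr(S_x)\Pr(S_y)$ simply does not hold for all pairs.

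The paper repairs this by splitting into cases. For $|G|$ odd, your argument goes through. For $|G|$ even, one observes that if $|C_2(x)\cap C_3(y)| \ne 0$ then it lies between $|I|/2$ and $|I|$, and that the number of pairs $\{x,y\}$ for which this happens is at most $n^2/(2|I|)$. When $|I| \le \sqrt{n}$ the overlap is $O(\sqrt{n})$ and $p^2\sqrt{n} = o(1)$ saves you. When $|I| > \sqrt{n}$ one abandons the ratio estimate altogether for those rare bad pairs and instead uses the crude bound $\Pr(S_x \wedge S_y) - \Pr(S_x)\Pr(S_y) \le \Pr(S_x) \le e^{-p^2 n/12}$ (which comes from finding a large matching inside $C_3(x)$), and checks that $(n^2/|I|) \cdot e^{-p^2 n/12} = o(\Ex[S]^2)$. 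This extra case analysis is the missing idea in your proposal.
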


\begin{proof}
  Given distinct elements $x, y \in E$, define a graph $J(x,y)$ on the elements of $C(x) \cup C(y)$ as follows: let $B_1 \sim B_2$ if $B_1 \cap B_2 \neq \emptyset$ and $B_1 \neq B_2$. Now define
  \[
  \Delta_{x,y} = \sum_{B_1 \sim B_2} p^{|B_1 \cup B_2|},
  \]
  where the sum is taken over all edges of $J(x,y)$. For the sake of brevity, let $C_1(x,y) = C_1(x) \cup C_1(y)$ and let $C'(x,y) = C_2(x) \cup C_2(y) \cup C_3(x) \cup C_3(y)$. By Janson's inequality,
  \begin{equation}
    \label{ineq:PSxSy}
    \Pr(S_x \wedge S_y) \le (1-p)^{|C_1(x,y)|} (1-p^2)^{|C'(x,y)|} e^{\Delta_{x,y}}.
  \end{equation}
  We claim that for each $x,y \in E''$ with $x \neq y$,
 \begin{equation}
    \label{ineq:Deltaxy}
    \Delta_{x,y} \le |C_1(x,y)| \cdot 6p^2 + |C'(x,y)| \cdot 12p^3 = O\big( qp^2 + np^3 \big).
  \end{equation}
  Indeed, if $B_1 \in C_1(x,y) \cup C'(x,y)$ and $B_2 \in C'(x,y)$ are such that $B_1 \cap B_2 = \{z\}$, then $B_2 = \{z, z'\}$, where 
  \[
  z' \in \big\{ x-z, z-x, z+x, y-z, z-y, z+y \big\}.
  \]
  Thus, given $B_1 \ni z$, there are at most six sets $B_2 \in C'(x,y)$ such that $B_1 \cap B_2 = \{z\}$. It follows that for every $B_1 \in C_1(x,y)$, there are at most six sets $B_2 \in C'(x,y)$ with $B_1 \cap B_2 \neq \emptyset$ and for every $B_1 \in C'(x,y)$, there are at most twelve sets $B_2 \in C'(x,y)$ with $B_1 \cap B_2 \neq \emptyset$. The second inequality follows since $|C_1(x,y)| = |C_1(x)| + |C_1(y)| < 14q$ and $|C'(x,y)| \le 3n$. 
  
  Combining~\eqref{PSx},~\eqref{ineq:PSxSy}, and~\eqref{ineq:Deltaxy}, we obtain
  \begin{equation} \label{SxSy}
    \Pr(S_x \wedge S_y)\, \le \, \Pr(S_x) \Pr(S_y) (1-p^2)^{C^*(x,y)} e^{O( qp^2 + np^3)},
  \end{equation}
  where $C^*(x,y)  = |C'(x,y)| - |C_2(x) \cup C_3(x)| - |C_2(y) \cup C_3(y)|$. Hence, it only remains to bound $C^*(x,y)$ from below. 
  
  Recall that $E''$ does not contain any pairs $\{x,y\}$ with $x = -y$. Hence $C_3(x) \cap C_3(y) = \emptyset$, and trivially $C_2(x) \cap C_2(y) = \emptyset$. It follows by elementary manipulation that
  \begin{equation}
    \label{ineq:C2}
    |C'(x,y)| \ge |C_2(x) \cup C_3(x)| + |C_2(y) \cup C_3(y)| - |C_2(x) \cap C_3(y)| - |C_2(y) \cap C_3(x)|,
  \end{equation}
  i.e., $C^*(x,y) \ge - |C_2(x) \cap C_3(y)| - |C_2(y) \cap C_3(x)|$.
  
  If $C_2(x) \cap C_3(y) \neq \emptyset$, then there exist $a,b \in A$ such that $x = a + b$ and $y = a - b$, and thus $2a = x+y$ and $2b = x-y$. We split into two cases.
  
  \medskip
  \noindent \textbf{Case 1}: $|G|$ is odd. 
  
  \medskip
  In this case the equations $2a = x+y$ and $2b = x-y$ have at most one solution $(a,b)$, and so $|C_2(x) \cap C_3(y)| \le 1$. Hence $C^*(x,y) \ge -2$, and so, by~\eqref{SxSy},  
  \[
  \Pr(S_x \wedge S_y) \, \le \, \Pr(S_x)\Pr(S_y) (1-p^2)^{-2} e^{O(qp^2 + np^3)}.
  \]
  Thus, using the bounds $p \ll n^{-1/3}$ and $\Ex[S] \gg 1$,
  \begin{align*}
    \Var[S] & = \sum_{x,y \in E''} \Big(\Pr(S_x \wedge S_y) - \Pr(S_x)\Pr(S_y) \Big) \, \le \, \Ex[S] + \Ex[S]^2\left( (1-p^2)^{-2} e^{O(qp^2 + np^3)} - 1\right) \\
    & \le \Ex[S] + O\big( qp^2 + np^3 \big)\Ex[S]^2 \, = \, \Ex[S] + o(\Ex[S]^2) \, = \, o(\Ex[S]^2),
  \end{align*}
  as required.
  
  \medskip
  \noindent \textbf{Case 2}: $|G|$ is even.
  
  \medskip
  Since $|G|$ is even, it follows (by Theorem~\ref{thm:SFG-structure}) that $H$ is of index $2$, and so $A = G \setminus H$. Let 
  \[
  I \; := \; \big\{ h \in H \,\colon\, h + h = 0 \big\},
  \]
  note that $I$ is a subgroup of $H$, and let $i = |I|$. We claim that 
  \[
  |C_2(x) \cap C_3(y)| = 0 \quad \text{or} \quad i/2 \, \le \, |C_2(x) \cap C_3(y)| \, \le \, i.
  \]
  To see this, simply observe that if $\{a,b\} \in C_2(x) \cap C_3(y)$ and $h \in I$, then $\{a+h,b+h\} \in C_2(x) \cap C_3(y)$. Indeed, if $x = a + b$ and $y = a - b$ for some $a, b \in A$, then $x = (a+h) + (b+h)$ and $y = (a+h) - (b+h)$, and $a + h, b + h \in A$ since $A = G \setminus H$. Conversely, if $x = a+b = a'+b'$ and $y = a-b = a'-b'$ for some $a,a',b,b' \in A$, then $a' - a = b' - b \in I$. Hence, there are precisely $i$ ordered pairs $(a,b)$ with $\{a, b\} \in C_2(x) \cap C_3(y)$.

  Next, note that there are at most $n^2/(2i)$ pairs $\{x,y\} \in {E'' \choose 2}$ with $|C_2(x) \cap C_3(y)| + |C_2(y) \cap C_3(x)| > 0$. Indeed, there are at most $n^2/4$ quadruples $(a,b,x,y)$ in $A^2 \times (E'')^2$ with $x = a + b$ and $y = a - b$, and there are at least $i/2$ such quadruples for each pair $\{x,y\}$ as above. Moreover, for every $x \in E''$, either $|C_3(x)| = 0$ or $|C_3(x)| \ge n/4$, since any solution $(a,b) \in A^2$ of the equation $x = a - b$ may be shifted by an arbitrary element of $H$. Hence, for such an $x$,
  \begin{equation}
    \label{ineq:PSxSy-PSxPSy}
    \Pr(S_x \wedge S_y) - \Pr(S_x)\Pr(S_y) \, \le \, \Pr(S_x) \, \le \, (1-p^2)^{|C_3(x)|/3} \, = \, (1-p^2)^{n/12} \, \le \, e^{-p^2n/12},
  \end{equation}
  where the second inequality follows because there exists a matching in $C_3(x)$ of size $|C_3(x)|/3$. (This follows because the graph $C_3(x)$ has maximum degree $2$; the worst case is a disjoint union of triangles.)
  
  Finally, we divide once again into two cases: $i \le \sqrt{n}$ and $i > \sqrt{n}$. In the former case we have
  \[
  C^*(x,y) \, \ge \, - |C_2(x) \cap C_3(y)| - |C_2(y) \cap C_3(x)| \, \ge \, - 2\sqrt{n},
  \]
  and so, by~\eqref{SxSy},
  \[
  \Pr(S_x \wedge S_y) \,\le \,\Pr(S_x)\Pr(S_y) (1-p^2)^{-2\sqrt{n}} e^{O(qp^2 + np^3)} \, = \, (1+o(1))\Pr(S_x)\Pr(S_y),
  \]
  since $p^2\sqrt{n} + p^3n = o(1)$. Hence $\Var[S] = o(\Ex[S]^2)$, as required. 
  
  If $i > \sqrt{n}$, we partition the sum $\sum_{x,y \in E''} \big(\Pr(S_x \wedge S_y) - \Pr(S_x)\Pr(S_y) \big)$ into two parts, according to whether or not $|C_2(x) \cap C_3(y)| + |C_2(y) \cap C_3(x)| > 0$. By~\eqref{ineq:PSxSy-PSxPSy}, we obtain
  \[
  \sum_{x,y \in E''} \Big(\Pr(S_x \wedge S_y) - \Pr(S_x)\Pr(S_y) \Big) \, \le \, \Ex[S] + \Ex[S]^2 \left( e^{O(qp^2 + np^3)} - 1\right) + \frac{n^2}{2i} \cdot e^{-p^2n/12}.
  \]
  Now, recalling from~\eqref{ineq:ExS} that $\Ex[S] \ge \frac{n}{20q} e^{-2p^2n}$ and noting that $\frac{1}{\sqrt{n}} e^{-p^2n/12} \ll e^{-4p^2n}$ if $c_q$ is sufficiently small, we deduce that
  \[
  \Var[S] \, = \, o\Big( \Ex[S]^2 + n^2e^{-4p^2n} \Big) \, = \, o(\Ex[S]^2),
  \]
  as required.
\end{proof}

Since $\Var[S] = o(\Ex[S]^2)$, the number of safe elements in $E''$ satisfies 
\[
S \, \ge \, \frac{\Ex[S]}{2} \; \ge \; \frac{n}{40q} \cdot e^{-2p^2n},
\]
with high probability, by Chebyshev's inequality. Finally, note that the events $\{S_x\}_{x \in E''}$ depend only on the set $A \cap G_p$, and recall that $A \cap E'' = \emptyset$. Hence the events $\{ x \in G_p\}_{x \in E''}$ are independent of the events $\{S_x\}_{x \in E''}$, and so, by Chernoff's inequality, the number of safe elements in $E'' \cap G_p$ is at least $pS/2$ with high probability. 

Hence
\[
\left|\big\{ x \in E'' \cap G_p \colon \text{$x$ is safe} \big\} \right| \, \ge \, \frac{pn}{80q} \cdot e^{-2p^2n} \, > \, 10\sqrt{pn\log n},
\]
where the last inequality follows from the bounds $\frac{\log n}{n} \ll p < c_q \sqrt{\frac{\log n}{n}}$, provided that $c_q$ is sufficiently small and $n$ is sufficiently large. Thus, by~\eqref{ineq:AcapGp},
\[
B \; := \; \big( A \cap G_p \big) \cup \big\{ x \in E'' \cap G_p \,\colon\, x \textup{ is safe} \big\}
\]
is larger than $A' \cap G_p$ for every $A' \in \SF(G)$, and is sum-free, as required.
\end{proof}

We now turn to the $1$-statement in Theorem~\ref{thm:main}. The proof uses Theorem~\ref{thm:approx-stability-full}, together with Janson's inequality.

\begin{proof}[Proof of the $1$-statement in~Theorem~\ref{thm:main}]
Let $q$ be a prime with $q \equiv 2 \pmod 3$. We shall prove that if $C_q$ is sufficiently large, then the following holds with high probability as $n \to \infty$. Let $G$ be an Abelian group of type I($q$) with $|G| = n$, let 
\begin{equation}
  \label{eq:p-assumption}
  p \; \ge \; C_q \sqrt{ \frac{\log n}{n} },
\end{equation}
and let $G_p$ be a random $p$-subset of $G$. Then every maximum-size sum-free subset $B \subseteq G_p$ is of the form $A \cap G_p$ for some $A \in \SF(G)$. 

The proof will be roughly as follows. If a maximum-size sum-free subset $B \subseteq G_p$ is not of the form $A \cap G_p$, where $A \in \SF(G)$, then there must exist a nonempty set $S \subseteq G_p \setminus A$ and a set $T \subseteq A \cap G_p$ satisfying $|S| \ge |T|$ such that $B = S \cup (A \cap G_p) \setminus T$. We shall show that the expected number of such pairs $(S,T)$ is small when $|S| \le \eps p n$, using Janson's inequality. The case $|B \setminus A| > \eps p n$ for every $A \in \SF(G)$ is dealt with using Theorem~\ref{thm:approx-stability-full}.

\smallskip

Let $\eps > 0$ be sufficiently small and let $B$ be a maximum-size sum-free subset of $G_p$. Note that $|B| \ge |A \cap G_p|$ for any $A \in \SF(G)$ and so, by Chernoff's inequality, 
\[
|B| \; \ge \; \left( \mu(G) - \frac{\varepsilon}{40q^2 + 40q} \right) p |G|
\]
with high probability as $n \to \infty$. Hence, by Theorem~\ref{thm:approx-stability-full}, we may assume that $|B \setminus A| \le \eps p n$ for some $A \in \SF(G)$. We shall prove that in fact, with high probability, $B = A \cap G_p$. 

We shall say that a pair of sets $(S,T)$ is \emph{bad} for a set $A \in \SF(G)$ if the following conditions hold:
\begin{itemize}
\item[$(a)$] $S \subseteq G_p \setminus A$ and $T \subseteq A \cap G_p$, \\[-2ex]
\item[$(b)$] $0 < |S| = |T| \le \eps p n$,  \\[-2ex]
\item[$(c)$] $S \cup (A \cap G_p) \setminus T$ is sum-free. 
\end{itemize}
We shall prove that, for every $A \in \SF(G)$, the probability that there exists a bad pair $(S,T)$ is $o(1/n)$ as $n \to \infty$. It will follow (by Corollary~\ref{cor:SFG}) that with high probability no such pair exists for any $A \in \SF(G)$. By the above discussion, it will imply that $G_p$ is sum-free good with high probability. We remark that a bound of the form $o(1)$ on the probability of the existence of a bad pair does not suffice since the events ``$|B \setminus A| \le \eps p n$'' and ``there exists a pair $(S,T)$ which is bad for $A$'' are not independent of one another.

Fix some $A \in \SF(G)$. As in the proof of the lower bound, for every $x \in G \setminus A$, define
\[
C_1(x) = \Big\{y \in A \colon x = y + y \Big\} \quad \text{and} \quad C_2(x) = \left\{ \{y, z\} \in {A \choose 2} \colon x = y + z \right\}.
\]
We begin by proving two easy properties of the sets $C_1(x)$ and $C_2(x)$, which will be useful in what follows.

\begin{cl}
  \label{cl:claim-one}
  For every $x \in G \setminus A$, we have $\max\left\{ |C_1(x)|, |C_2(x)| \right\} \ge n/(3q)$.  
\end{cl}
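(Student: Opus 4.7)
The plan is to apply Theorem~\ref{thm:SFG-structure} to exploit the structure of $A$. By that theorem, $A$ is a union of cosets of some subgroup $H \le G$ of index $q$, and moreover $A \cup (A+A) = G$. I will count the ordered pairs $(y,z) \in A \times A$ with $y + z = x$, show there are at least $|H| = n/q$ of them, and then deduce the bound on $\max\{|C_1(x)|, |C_2(x)|\}$ by a trivial averaging step.

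Fix $x \in G \setminus A$. Since $A \cup (A+A) = G$, there exist $y_0, z_0 \in A$ with $y_0 + z_0 = x$. Because $A$ is a union of $H$-cosets, for every $h \in H$ both $y_0 + h$ and $z_0 - h$ again lie in $A$, and
\[
(y_0 + h) + (z_0 - h) \;=\; y_0 + z_0 \;=\; x.
\]
The map $h \mapsto (y_0 + h, z_0 - h)$ is injective (the first coordinate determines $h$), so this produces at least $|H| = n/q$ distinct ordered pairs $(y,z) \in A \times A$ with $y + z = x$. Splitting these pairs according to whether $y = z$ or $y \ne z$, and noting that pairs with $y = z$ correspond exactly to elements counted by $C_1(x)$ while pairs with $y \ne z$ correspond to unordered pairs counted twice by $C_2(x)$, we obtain
\[
|C_1(x)| + 2|C_2(x)| \;\ge\; \frac{n}{q}.
\]

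The claim then follows from the elementary inequality $3\max(a,b) \ge a + 2b$ for nonnegative reals $a, b$ (if $a \ge b$ it reads $2a \ge 2b$; otherwise $b \ge a$), applied with $a = |C_1(x)|$ and $b = |C_2(x)|$. There is no real obstacle: the crux is simply that Theorem~\ref{thm:SFG-structure} delivers both ingredients at once, namely the saturation property $A \cup (A+A) = G$ (which guarantees at least one representation $x = y_0 + z_0$) and the coset structure of $A$ (which inflates this single representation into an orbit of size $|H|$).
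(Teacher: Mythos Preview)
Your proof is correct and follows essentially the same route as the paper's: both invoke Theorem~\ref{thm:SFG-structure} to obtain a single representation $x = y_0 + z_0$ with $y_0,z_0 \in A$, shift by every $h \in H$ to produce $|H| = n/q$ ordered representations, and deduce $|C_1(x)| + 2|C_2(x)| \ge n/q$. The only difference is cosmetic --- you make the final passage from $|C_1(x)| + 2|C_2(x)| \ge n/q$ to $\max\{|C_1(x)|,|C_2(x)|\} \ge n/(3q)$ explicit via the inequality $3\max(a,b) \ge a+2b$, whereas the paper leaves this step to the reader.
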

\begin{proof}
  By Theorem~\ref{thm:SFG-structure}, there exists a subgroup $H$ of $G$ of index $q$ such that $A$ is a union of cosets of $H$ and $A \cup (A+A) = G$. It follows that $x = y + z$ for some $y, z \in A$, and that $y+h, z-h \in A$ for every $h \in H$. Thus $\{y + h,z - h\} \in C_1(x) \cup C_2(x)$ for every $h \in H$, and hence $|C_1(x)| + 2|C_2(x)| \ge |H| = n/q$.
\end{proof}

Let $A^* = \{x \in G \setminus A \colon |C_1(x)| \ge n/(3q)\}$. We now show that we can immediately disregard bad pairs that contain an element of $A^*$.
  
\begin{cl}
  \label{cl:claim-two}
  With probability $1 - o(1/n)$, there is no bad pair $(S,T)$ with $S \cap A^* \neq \emptyset$.
\end{cl}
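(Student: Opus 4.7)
The plan is to show that the condition $x \in A^*$ forces $|T|$ to be far larger than $\varepsilon p n$, contradicting condition $(b)$. First I would extract the structural consequence of sum-freeness of $B = S \cup (A \cap G_p) \setminus T$: if $x \in S \cap A^*$ and $y \in C_1(x)$, then $y \in A$ and $y+y = x$, so whenever $y \in G_p \setminus T$ we would have $y, y, x \in B$, which is a Schur triple. Since $B$ is sum-free by condition $(c)$, this forces $C_1(x) \cap G_p \subseteq T$, and therefore $|T| \ge |C_1(x) \cap G_p|$.

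Next I would use Chernoff's inequality to obtain a uniform lower bound on $|C_1(x) \cap G_p|$ over $x \in A^*$. Since $|C_1(x)| \ge n/(3q)$ by the definition of $A^*$, the mean $p|C_1(x)|$ is at least $pn/(3q)$, and the lower-tail Chernoff bound gives
\[
\Pr\!\left( |C_1(x) \cap G_p| < \frac{pn}{6q} \right) \le \exp\!\left( -\frac{pn}{24q} \right).
\]
Because $p \ge C_q\sqrt{\log n / n}$, we have $pn \ge C_q\sqrt{n \log n} \gg q \log n$ provided that $C_q$ is large enough, so the above probability is at most $n^{-3}$. Union bounding over the at most $n$ elements of $A^*$ yields that, with probability $1 - o(1/n)$, every $x \in A^*$ satisfies $|C_1(x) \cap G_p| \ge pn/(6q)$.

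Combining these two observations, and choosing $\varepsilon$ small enough that $\varepsilon < 1/(6q)$ (a condition we are free to impose on the fixed constant $\varepsilon$ throughout the proof of the $1$-statement), any bad pair $(S,T)$ with $x \in S \cap A^*$ would satisfy $|T| \ge pn/(6q) > \varepsilon p n$, contradicting $|T| \le \varepsilon p n$ from $(b)$. Hence no such pair exists, with probability $1 - o(1/n)$, as claimed. I do not anticipate any real obstacle: the whole content is the deterministic sum-free deduction $C_1(x) \cap G_p \subseteq T$ combined with a routine Chernoff estimate, and the factor $q$ is absorbed by the slack in the lower bound on $p$.
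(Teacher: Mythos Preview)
Your argument is correct and essentially identical to the paper's own proof: the same deterministic deduction $C_1(x)\cap G_p\subseteq T$, the same Chernoff estimate $\Pr(|C_1(x)\cap G_p|<pn/(6q))\le\exp(-pn/(24q))\le n^{-3}$, and the same union bound over $x\in A^*$. The only cosmetic difference is that the paper states the contradiction $pn/(6q)>\eps pn\ge |T|$ in one line rather than isolating the condition $\eps<1/(6q)$.
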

\begin{proof}
  Let $x \in A^*$, so $x \in G \setminus A$ and $|C_1(x)| \ge n/(3q)$. If $(S,T)$ is a bad pair for $A$ with $x \in S$, then $C_1(x) \cap G_p \subseteq T$, since $S \cup (A \cap G_p) \setminus T$ is sum-free, and $|T| \le \eps p n$. But by Chernoff's inequality, we have
  \[
  \left| C_1(x) \cap G_p \right| \, \ge \, \frac{pn}{6q} \, > \, \eps p n \, \ge \, |T|
  \]
  with probability at least $1 - \exp\big( -\frac{pn}{24q} \big) \ge 1 - n^{-3}$. By the union bound, with probability $1 - o(1/n)$ this holds for every $x \in A^*$, and hence with probability $1 - o(1/n)$ there is no bad pair $(S,T)$ with $S \cap A^* \neq \emptyset$.
\end{proof}

We now arrive at an important definition. Given a set $S \subseteq G \setminus (A \cup A^*)$ of size $k$, let $\G_S$ denote the graph with vertex set $A$ and edge set $\bigcup_{x \in S}C_2(x)$. The key observation about $\G_S$ is the following: If $(S,T)$ is a bad pair for $A$, then $A \cap G_p \setminus T$ is an independent set in $\G_S$. This follows because the endpoints of each edge of $\G_S$ sum to an element of $S$.

By Claim~\ref{cl:claim-one} and the definition of $A^*$, we have that $|C_2(x)| \ge n/(3q)$ for every $x \in S$. Since, by definition, $C_2(x) \cap C_2(x') = \emptyset$ if $x \neq x'$, we have $e(\G_S) \ge kn/(3q)$. Moreover, each $C_2(x)$ is a matching (i.e., no two edges share an endpoint) and hence $\Delta(\G_S) \le |S| = k$. 

Given a subset $T \subseteq A$ of size $k$, let $\G_{S,T} = \G_S[A \setminus T]$. Note that $e(\G_{S,T}) \ge kn/(3q) - k^2$ and $\Delta(\G_{S,T}) \le k$ for every such $T$. Also, if $(S,T)$ is a bad pair for $A$, then $e(\G_{S,T}[G_p]) = 0$. Crucially, since the sets $S$, $T$, and $A \setminus T$ are pairwise disjoint, the events $S \subseteq G_p$, $T \subseteq G_p$, and $e(\G_{S,T}[G_p]) = 0$ are independent. Hence, the expected number of bad pairs for $A$ is at most
  \begin{equation}
    \label{ineq:P-bad-pair}
    \sum_{S,T} \, \Pr\big( S  \subseteq G_p \big) \, \Pr \big( T \subseteq G_p \big) \, \Pr\big( e(\G_{S,T}[G_p]) = 0 \big),
  \end{equation}
  where the summation ranges over all $S \subseteq G \setminus (A \cup A^*)$ and $T \subseteq A$ with $1 \le |S| = |T| \le \eps p n$.

  Fix a $k$ with $1 \le k \le \eps p n$ and let $S \subseteq G \setminus (A \cup A^*)$ and $T \subseteq A$ be sets of size $k$. Let $\mu = \Ex[e(\G_{S,T}[G_p])]$ and observe that
  \begin{equation}
    \label{ineq:mu}
    \mu \, = \,  p^2e\big( \G_{S,T} \big) \, \ge \, p^2 \left( \frac{kn}{3q} - k^2 \right) \, \ge \, \frac{p^2kn}{6q}.
  \end{equation}
since $k \le \varepsilon n$ and $\eps$ is sufficiently small. Furthermore, let $\Delta = \sum_{B_1 \sim B_2} p^{|B_1 \cup B_2|}$, where the summation ranges over all $B_1, B_2 \in E(\G_{S,T})$ such that $B_1 \neq B_2$ and $B_1 \cap B_2 \neq \emptyset$ (in other words, $B_1$ and $B_2$ are two edges of $\G_{S,T}$ that share an endpoint), and note that
  \begin{equation}
    \label{ineq:Delta}
    \Delta \, \le \, n {\Delta(\G_{S,T}) \choose 2} p^3 \, \le \, p^3k^2n.
  \end{equation}
  By Janson's inequality, if $\Delta \le \mu$, then~\eqref{ineq:mu} and~\eqref{eq:p-assumption} imply that
  \begin{equation*}
    \label{ineq:Janson-one}
    \Pr\big( e(\G_{S,T}[G_p]) = 0 \big) \, \le \, e^{-\mu/2} \, \le \, \left( e^{-p^2n/(12q)} \right)^k \, \le \, n^{-5k}
  \end{equation*}
  and if $\Delta \ge \mu$, then~\eqref{ineq:Delta} implies that
  \begin{equation*}
    \label{ineq:Janson-two}
    \Pr\big( e(\G_{S,T}[G_p]) = 0 \big) \, \le \, e^{-\mu^2/(2\Delta)} \, \le \, e^{-pn/(72q^2)}.
  \end{equation*}
 Hence, using~\eqref{ineq:P-bad-pair}, for those $k$ such that $\Delta \le \mu$, the expected number of bad pairs $(S,T)$ with $|S| = |T| = k$ is at most
  \[
  {n \choose k}^2p^{2k} n^{-5k} \, \le \, n^{-3k} \, \le \, n^{-3}.
  \]
 Also, recalling that $k \le \eps p n$, for those $k$ such that $\Delta > \mu$, the expected number of bad pairs $(S,T)$ with $|S| = |T| = k$ is at most
  \[
  {n \choose k}^2p^{2k} e^{-\frac{pn}{72q^2}} \, \le \, \left(\frac{epn}{k}\right)^{2k} e^{-\frac{pn}{72q^2}} \, \le \, \left[\left(\frac{e}{\eps}\right)^{2\eps} e^{-\frac{1}{72q^2}} \right]^{pn} \, \le \, e^{-\sqrt{n}}
  \]
  whenever $\eps$ is sufficiently small and $n$ is sufficiently large (depending only on $q$). It follows that $\eps p n \cdot \max\{ n^{-3}, e^{-\sqrt{n}} \}$ is an upper bound on the probability that there  exists a bad pair $(S,T)$ for $A$. Since $|\SF(G)| \le n$, the expected number of pairs $(S,T)$ which are bad for some $A \in \SF(G)$ tends to $0$ as $n \to \infty$. This completes the proof.
\end{proof}

\section{Lower bounds for other Abelian groups}\label{sec:disc-thm-main}

In this section we shall prove the following three propositions, which show that the threshold for some groups can be much larger than that determined in Theorems~\ref{thm:sharp} and~\ref{thm:main}. The first of these results shows that for certain groups of Type II, the threshold is at least $(n\log n)^{-1/3}$. Recall that $G_p$ denotes the $p$-random subset of $G$.

\begin{prop}
  \label{c_ex_2}
  Let $q$ be a prime with $q \equiv 1 \pmod 3$, let $n = 3q$, and let $G = \ZZ_n$. If 
  \[
  \frac{\log n}{n} \; \ll \; p(n) \; \ll \; \left( \frac{1}{n\log n} \right)^{1/3},
  \]
  then, with high probability as $n \to \infty$, there exists a sum-free subset of $G_p$ which is larger than $A \cap G_p$ for every $A \in \SF(G)$.
\end{prop}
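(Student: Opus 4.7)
The plan splits the range $\log n / n \ll p \ll (n \log n)^{-1/3}$ into two subranges. For $p \ll n^{-2/3}$, the expected number of Schur triples in $G_p$ is $O(p^3 n^2) = o(1)$, so with high probability $G_p$ is itself sum-free; by Chernoff, $|G_p|$ concentrates around $pn = 3pq$ while $|A \cap G_p|$ concentrates around $pq$ for each of the $|\SF(G)| \le 2q$ maximum sum-free sets, so taking $B := G_p$ beats $A \cap G_p$ for every $A \in \SF(G)$, exactly as in the Remark preceding the proof of the $0$-statement of Theorem~\ref{thm:main}.

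For the harder range $n^{-2/3} \lesssim p \ll (n \log n)^{-1/3}$ I would imitate the second moment argument used in that proof. By the Green--Ruzsa classification for type-II groups, every $A \in \SF(G)$ is a coset of the index-three subgroup $H := 3\ZZ_{3q}$ (the preimage of a nonzero residue under the canonical map $G \to \ZZ_3$); fix one such $A$. Let $E$ be a maximum sum-free subset of $H \cong \ZZ_q$, so $|E| = (q-1)/3$ by Theorem~\ref{thm:muG}. For $x \in E \setminus \{0\}$, call $x$ \emph{safe} if $(A \cap G_p) \cup \{x\}$ is sum-free. Because $H + A \subseteq A$ and $H + H \subseteq H$, the only potential Schur triples in $(A \cap G_p) \cup \{x\}$ are of the form $(a, x, a+x)$ with $a, a+x \in A \cap G_p$, and these pairs $\{a, a+x\}$, one for each $a \in A$, form a single Hamilton cycle $P_x$ of length $q$ on $A$. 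Thus $x$ is safe exactly when no edge of $P_x$ lies inside $G_p$.

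The second moment computation is simpler than the one in the proof of the $0$-statement of Theorem~\ref{thm:main}, since in the type-II setting the analogues of $C_1(x)$ and $C_2(x)$ are empty. FKG gives $\Pr(x \text{ safe}) \ge (1-p^2)^q$, and Janson's inequality applied to $P_x$ provides a matching upper bound; for a pair $x_1 \neq x_2$, the correlation estimate $\Pr(x_1, x_2 \text{ both safe}) \le \Pr(x_1 \text{ safe}) \Pr(x_2 \text{ safe}) \, e^{O(\Delta)}$ can be controlled via the limited vertex-sharing between $P_{x_1}$ and $P_{x_2}$. A Chebyshev step then gives a lower bound of order $\omega(\sqrt{pq\log n})$ on the number of safe $x \in E \cap G_p$; adjoining these safe elements to $A \cap G_p$ produces a sum-free subset of $G_p$ strictly larger than $\max_{A' \in \SF(G)}|A' \cap G_p|$, which by Chernoff and $|\SF(G)| \le 2q$ is at most $pq + O(\sqrt{pq\log n})$ with high probability.

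The main obstacle is pushing the upper bound on $p$ all the way to $(n\log n)^{-1/3}$. A naive single-element safety count has expectation $\sim (q/3)\, p\, e^{-p^2 q}$, which is useful only while $p^2 q \lesssim \log(pq)$, i.e.\ while $p \lesssim \sqrt{\log n / n}$; this is weaker than the claimed threshold. Closing this gap seems to require a multi-element swap: one adds a larger subset $X \subseteq E \cap G_p$ and simultaneously removes a small vertex cover $Y \subseteq A \cap G_p$ of those edges of $\bigcup_{x \in X} P_x$ that actually appear in $G_p$. Balancing $|X|$ against the typical minimum vertex-cover size, which scales like $|X| p^2 q$ divided by the maximum degree $2|X|$ of the union graph, is what produces the exponent $1/3$ and the logarithmic factor in $(n \log n)^{-1/3}$.
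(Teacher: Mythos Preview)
Your proposal has a genuine gap, and you have already put your finger on it: the single-element ``safe'' argument you borrow from the proof of the $0$-statement of Theorem~\ref{thm:main} cannot reach the exponent $1/3$. In the regime $p \gg \sqrt{\log n / n}$, the probability that a fixed $x \in E$ is safe is $\approx e^{-p^2 q} = o(1/q)$, so the expected number of safe elements in $E$ is $o(1)$ and the second moment argument is vacuous. Your suggested fix, a multi-element swap balancing $|X|$ against a minimum vertex cover of $\bigcup_{x \in X} P_x$ restricted to $G_p$, is only a heuristic; you would have to control the size of that vertex cover with high probability, and there is no indication in your outline of how to do this. (There is also a minor structural slip: it is not true that every $A \in \SF(\ZZ_{3q})$ is a coset of the index-three subgroup --- for instance the interval $\{q+1,\ldots,2q\}$ is maximum sum-free --- though this is harmless since you only need one $A$ to start from and Corollary~\ref{cor:SFZ3q} for the final comparison.)

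The paper takes a completely different route, and the different idea is exactly what produces the $1/3$. Rather than starting from some $A \in \SF(G)$ and trying to augment $A \cap G_p$, the paper proves a general lemma (Lemma~\ref{thm:lower-bound-Zn}) constructing a large sum-free subset of $(\ZZ_n)_p$ directly from an \emph{interval}. One takes $A = \{\ell,\ldots,r\}$ of length about $n/3 - 2m$ with $m = \min\{n,p^{-2}\}/100$, and then looks for safe elements in an adjacent interval $A'$ of length about $3m$. The crucial point is that the Schur triples obstructing an $x \in A'$ only involve a short sub-interval $A'' \subseteq A$ of length $\approx 2m \le p^{-2}/50$, so $\Pr(x\text{ safe}) \ge (1-p^2)^{2m} \ge e^{-O(1)}$ stays bounded away from zero throughout the entire range of $p$. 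One thus gains $\Theta(pm) = \Theta(1/p)$ extra elements over $pn/3$, and $1/p \gg \sqrt{pn\log n}$ is precisely $p \ll (n\log n)^{-1/3}$. The interval construction, not a swap argument, is the missing idea.
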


The second result gives the same bounds for groups of Type I of prime order.

\begin{prop}
  \label{c_ex_1}
  Let $q$ be a prime with $q \equiv 2 \pmod 3$, let $n = q$, and let $G = \ZZ_n$. If 
  \[
  \frac{\log n}{n} \; \ll \; p(n) \; \ll \; \left( \frac{1}{n\log n} \right)^{1/3},
  \]
  then, with high probability as $n \to \infty$, there exists a sum-free subset of $G_p$ which is larger than $A \cap G_p$ for every $A \in \SF(G)$.
\end{prop}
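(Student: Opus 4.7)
The strategy mirrors the $0$-statement of Theorem~\ref{thm:main}: for a fixed $A \in \SF(G)$, we construct a sum-free $B \subseteq G_p$ with $|B| > |A' \cap G_p|$ for every $A' \in \SF(G)$ with high probability. By Chernoff's inequality and a union bound over $|\SF(G)| \le n$ (Corollary~\ref{cor:SFG}), $|A' \cap G_p| \le p|A'| + 4\sqrt{pn\log n}$ for all $A' \in \SF(G)$ with high probability, so it suffices to produce a sum-free $B$ with $|B| \ge |A \cap G_p| + 10\sqrt{pn\log n}$. The main novelty relative to Theorem~\ref{thm:main} is that $\ZZ_q$ has no proper subgroup, so the sum-free set $E \subseteq H$ used there is unavailable; instead we exploit the arithmetic-progression structure of $A$.

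Take $A = \{k+1, \ldots, 2k+1\}$ where $q = 3k+2$, and let $C_1(x), C_2(x), C_3(x)$ be as in the proof of Theorem~\ref{thm:main}. A direct calculation exploiting the interval structure of $A$ shows that for $x = 2k+1+j$ with $j \in \{1, \ldots, k\}$ (and symmetrically for $x = k-j+1$), $|C_1(x)| + |C_2(x)| + |C_3(x)| \le 2(j+1)$. Set $L := \min\{\lfloor c_q/p^2 \rfloor, k/2\}$ for a small enough absolute constant $c_q > 0$, and let $X := \{x \in G \setminus A : |C_1(x)| + |C_2(x)| + |C_3(x)| \le L\}$, so $|X| \ge L$. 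For each $x \in X$, let $S_x$ be the event that $x \in G_p$, no element of $C_1(x) \cup (\{2x\} \cap A)$ lies in $G_p$, and no pair in $C_2(x) \cup C_3(x)$ is fully contained in $G_p$; by FKG, $\Pr(S_x) \ge p(1-p)^{O(1)}(1-p^2)^L \ge p/2$ for $c_q$ small. Setting $N := |\{x \in X : S_x\}|$, we get $\Ex[N] \ge Lp/2 = c_q/(2p)$. Since $q$ is odd, for distinct $x, x' \in X$ equations such as $2y = x + x'$ have unique solutions, so $|C_2(x) \cap C_3(x')| = O(1)$ and the Janson correlation satisfies $\Delta_{x,x'} = O(p^2)$; summing, $\sum_{x \ne x'}(\Pr(S_x \wedge S_{x'}) - \Pr(S_x)\Pr(S_{x'})) = O(p^2) \Ex[N]^2 = o(\Ex[N]^2)$, so Chebyshev's inequality yields $N \ge \Ex[N]/2$ with high probability.

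Let $S := \{x \in X : S_x\}$, and let $Y$ be the set of $y \in A \cap G_p$ such that $y = x_1 + x_2$ or $y = x_1 - x_2$ for some $x_1, x_2 \in S$. A case analysis (using $L \le k/2$) shows that the potential values of such $y$ are confined to two fixed intervals of $A$ of combined length $O(L)$, that $S$ is itself sum-free, and that $B := (A \cap G_p \setminus Y) \cup S$ is sum-free. The expected number of triples $(x_1, x_2, y)$ with $x_1, x_2 \in X \cap G_p$, $y \in G_p$, and $y \in \{x_1 + x_2, x_1 - x_2\}$ is at most $2|X|^2 p^3 \le 2L^2 p^3 = O(c_q^2/p)$, so $|Y| \le 4c_q^2/p$ with high probability. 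Combining, $|B| - |A \cap G_p| = |S| - |Y| \ge c_q/(4p) - 4c_q^2/p = \Omega(c_q/p)$ for $c_q$ sufficiently small; since $1/p \gg \sqrt{pn\log n}$ exactly when $p^3 n \log n \ll 1$, the hypothesis $p \ll (n\log n)^{-1/3}$ provides the required excess. The main technical obstacle is the variance bound $\Var[N] = o(\Ex[N]^2)$, which is delicate since $\Ex[N]$ itself is only of order $1/p$; it is handled via Janson's inequality, leveraging the sparsity of pairwise overlaps among the Schur structures $C_i(x)$ when $q$ is odd.
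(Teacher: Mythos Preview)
Your route is different from the paper's. The paper deduces Proposition~\ref{c_ex_1} (and Proposition~\ref{c_ex_2} simultaneously) from Lemma~\ref{thm:lower-bound-Zn}, a general lower bound on the largest sum-free subset of a $p$-random subset of any $\ZZ_n$. The key device in that lemma is to work not with a maximum sum-free set but with a \emph{shortened} interval $A=\{\ell,\ldots,r\}$ of length about $n/3-2m$ (where $m=\min\{n,p^{-2}\}/100$), together with an adjacent interval $A'$ of length about $3m$. The shortening is arranged so that every Schur triple in $A\cup A'$ has both summands in a fixed sub-interval $A''\subseteq A$ of length about $2m$; hence for each $x\in A'$ one has $|C_1(x)|\le 1$, $|C_2(x)|\le 2m$, and there are \emph{no} $C_3$-type constraints whatsoever. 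Thus $(A\cap G_p)\cup\{\text{safe }x\in A'\cap G_p\}$ is automatically sum-free, with no removal step, and the proposition follows by comparing $pn/3+pm/4$ with $p\lceil n/3\rceil+4\sqrt{pn\log n}$. Your approach instead keeps the full maximum $A$ and is forced to excise a conflict set $Y$ afterwards; this is more direct but costs an extra concentration argument.

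That extra argument is where your write-up has a real gap. From $\Ex[\#\text{triples}]\le 2L^2p^3$ you conclude ``$|Y|\le 4c_q^2/p$ with high probability'', but an expectation bound plus Markov only gives this with probability bounded away from $0$, not tending to $1$. A second-moment estimate on the triple count does work here (the variance is $O(L^2p^3+L^3p^5)=o((Lp)^2)$ under the hypothesis $p^3n\ll 1$), but you must actually carry it out. A smaller issue: the claim $\Delta_{x,x'}=O(p^2)$ conflates coincident edges with edges merely sharing a vertex; the Janson $\Delta$ is a sum over the latter, and since $|C_2(x)\cup C_3(x)|=O(L)$ one gets $\Delta_{x,x'}=O(p^2+Lp^3)=O(c_qp)$. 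The conclusion $\Var[N]=o(\Ex[N]^2)$ survives, but for a different reason than the one you give.
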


The third proposition shows that for the hypercube $\{0,1\}^k$ on $2n$ vertices, the threshold is different from the threshold for $\ZZ_{2n}$.

\begin{prop}\label{2^k}
  Let $C < 1/2$ and let $k, n \in \N$ satisfy $n = 2^{k-1}$, and let $G = \ZZ_2^k$. If 
  \[
  \frac{\log n}{n} \; \ll \; p \; \le \; \sqrt{\frac{C \log n}{n} },
  \]
  then, with high probability as $n \to \infty$, there exists a sum-free subset of $G_p$ which is larger than $A \cap G_p$ for every $A \in \SF(G)$.
\end{prop}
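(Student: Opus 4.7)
The plan is to adapt the strategy of the $0$-statement of Theorem~\ref{thm:main} to $G = \ZZ_2^k$, exploiting two simplifications special to characteristic~$2$. By Theorem~\ref{thm:SFG-structure} (applied with $q = 2$), every $A \in \SF(G)$ has the form $A = G \setminus H$ for some index-$2$ subgroup $H \cong \ZZ_2^{k-1}$; since $H$ is itself of type~I($2$), it contains a maximum sum-free set $E$ with $|E| = \mu(H)\cdot|H| = n/2$. Fix one such pair $(A,E)$; the goal is to show that, with high probability, many elements of $E \cap G_p$ are \emph{safe} in the sense that adjoining them to $A \cap G_p$ preserves sum-freeness, and that the resulting set is strictly larger than every $A' \cap G_p$ with $A' \in \SF(G)$.

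For $x \in E$, define $C_1(x), C_2(x), C_3(x)$ as in the proof of the $0$-statement of Theorem~\ref{thm:main}. Because every element of $G$ has order $2$, the self-doubling set $C_1(x)$ is empty for $x \neq 0$, and $C_3(x) = C_2(x)$; moreover, since $A$ is a coset of $H$ and $x \in H \setminus \{0\}$, the map $y \mapsto y+x$ is a fixed-point-free involution on $A$, so $C_2(x)$ is a perfect matching on $A$ of size $n/2$. Thus $x$ is safe iff no edge of this matching lies entirely in $G_p$, and by FKG
\[
\Ex[S] \;:=\; \Ex\bigl[\#\{\text{safe }x \in E\}\bigr] \;\ge\; \tfrac{n}{2}(1-p^2)^{n/2} \;=\; \bigl(\tfrac{1}{2}+o(1)\bigr)n^{\,1-C/2},
\]
where $C = p^2 n / \log n$. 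No trimming of $E$ (as in the original argument) is required: $|C_1(x)| = 0$ and, in characteristic~$2$, the constraint $x = -y$ is equivalent to $x = y$.

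The key point for the variance is that, for distinct $x, y \in E$, the matchings $C_2(x)$ and $C_2(y)$ are disjoint (a pair $\{a,b\}$ has a unique sum), so the overlap excess $C^*(x,y)$ from the proof of Theorem~\ref{thm:main} vanishes. A short count --- each edge of $C_2(x)$ meets exactly two edges of $C_2(y)$ --- gives $\Delta_{x,y} = O(np^3) = o(1)$ under our hypothesis $p \le \sqrt{C\log n/n}$, and Janson's inequality then yields $\Pr(S_x \wedge S_y) \le (1+o(1))\Pr(S_x)\Pr(S_y)$. Consequently $\Var[S] = o(\Ex[S]^2)$ (since $\Ex[S] \to \infty$ for any $C < 2$), and Chebyshev yields $S \ge \Ex[S]/2$ w.h.p. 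Since the events $\{x\text{ is safe}\}_{x \in E}$ depend only on $G_p \cap A$ while the events $\{x \in G_p\}_{x \in E}$ depend only on $G_p \cap H$, the two families are independent; a Chernoff bound then gives at least $pS/2 \gtrsim \sqrt{\log n}\cdot n^{(1-C)/2}$ safe elements of $E$ inside $G_p$ with high probability.

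To conclude, set $B = (A \cap G_p) \cup \{\text{safe }x \in E \cap G_p\}$. A case analysis on any putative Schur triple $b_1+b_2 = b_3$ in $B$ shows $B$ is sum-free: $A$ being sum-free rules out triples inside $A \cap G_p$; safety rules out triples with exactly one safe element; triples with exactly two safe elements would force an element of $A$ to equal a sum in $E+E \subseteq H$, impossible since $A \cap H = \emptyset$; and $E$ being sum-free rules out triples inside the safe set. Combining Chernoff with the bound $|\SF(G)| \le 2n$ from Corollary~\ref{cor:SFG} controls $|A' \cap G_p|$ to within $O(\sqrt{pn\log n}) = O(n^{1/4}(\log n)^{3/4})$ uniformly in $A' \in \SF(G)$; the safe-element gain $\sqrt{\log n}\cdot n^{(1-C)/2}$ beats this precisely when $C < 1/2$, which is the origin of the bound in the proposition. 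The main obstacle is really just bookkeeping: one must track which of the factors $(1-p)^{|C_1|}$, $(1-p^2)^{|C_2|}$, $(1-p^2)^{|C_3|}$ survive in characteristic~$2$, with the improved threshold $\sqrt{\log n/(2n)}$ (versus $\sqrt{\log n/(3n)}$ for $\ZZ_{2n}$) arising precisely from the combined effect of $C_1 = \emptyset$ and $C_2 = C_3$.
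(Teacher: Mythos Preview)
Your proof is correct and follows essentially the same approach as the paper: fix an index-$2$ subgroup $H$, take a maximum sum-free set $E \subseteq H$ of size $n/2$, show via FKG/Janson/Chebyshev that many $x \in E$ are safe (using the characteristic-$2$ simplifications $C_1(x) = \emptyset$ and $C_2(x) = C_3(x)$ with $|C_2(x)| = n/2$), and then compare the resulting surplus of safe elements in $G_p$ against the Chernoff fluctuation $O(\sqrt{pn\log n})$ of $|A' \cap G_p|$ uniformly over $A' \in \SF(G)$. Your explicit verification that $\Delta_{x,y} = O(np^3)$ via the ``each edge meets exactly two'' count and your case analysis for the sum-freeness of $B$ are slightly more detailed than the paper's treatment, but the argument is the same.
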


The proofs of Propositions~\ref{c_ex_2} and~\ref{c_ex_1} are almost identical and are based on the following general statement providing a lower bound for the size of a largest sum-free subset of the $p$-random subset of $\ZZ_n$.

\begin{lemma}
  \label{thm:lower-bound-Zn}
  Let $G = \ZZ_n$ and let $m = \min\{n,p^{-2}\}/100$. If 
  \[
  \frac{\log n}{n} \; \ll \; p(n) \; \ll \; 1,
  \]
  then, with high probability as $n \to \infty$, the largest sum-free subset of $G_p$ has at least 
  \[
  \frac{pn}{3} \, + \, \frac{pm}{4} \, - \, 4\sqrt{pn\log n}
  \]
  elements.
\end{lemma}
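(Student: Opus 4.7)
My plan is to construct a sum-free subset of $G_p$ of the required size by augmenting $A \cap G_p$, where $A$ is the middle third of $\ZZ_n$, with safely selected elements just above $A$. First I take $A = \{a \in \ZZ_n : n/3 < a \le 2n/3\}$, which is sum-free with $|A| \ge n/3$; Chernoff's inequality gives $|A \cap G_p| \ge pn/3 - 4\sqrt{pn\log n}$ with high probability, and the task reduces to gaining roughly $pm/4$ additional elements.

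The reservoir I would use is $L = \{\ell \in \ZZ_n : 2n/3 < \ell \le 2n/3 + m\}$, placed immediately above $A$. The crucial calculation is that for $x = 2n/3 + y \in L$, the Schur triples involving $x$ and two elements of $A$ are confined to two narrow families: those of the form $v_2 \equiv v_1 + x \pmod n$ (forcing $v_1 \in (2n/3 - y, 2n/3]$) and those of the form $v_1 + v_2 = x$ (forcing $v_1 \in (n/3, n/3 + y)$), together contributing at most $2y \le 2m$ bad pairs, plus the singleton events involving $2x \in A$. Call $x \in L \cap G_p$ \emph{locally safe} if no such bad pair is fully contained in $G_p$ and $2x \notin G_p$. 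Because $p^2 m \le 1/100$ by the definition of $m$, FKG gives $\Pr(x \textup{ locally safe}) \ge (1 - p^2)^{2m}(1 - p)^2 \ge 0.9$ for large $n$, so if $S^0 \subseteq L \cap G_p$ denotes the locally safe elements, linearity yields $\Ex[|S^0|] \ge 0.9\,pm$. A standard second-moment argument---applicable because bad pairs for distinct $x \in L$ are almost disjoint---gives $|S^0| \ge 0.8\, pm$ with high probability.

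The subtlety is that $L + L \pmod n \subset (n/3, n/3 + 2m] \subset A$, so sums of pairs of elements of $S^0$ land inside $A$ and can create Schur triples with elements of $A \cap G_p$. I handle this by deleting $T := (S^0 + S^0) \cap A \cap G_p$ from $A \cap G_p$. The deterministic bound $|S^0 + S^0| \le 2m$, combined with a careful count of the number of representations of each $w \in A$ as $x_1 + x_2$ with $x_i \in L$, gives $\Ex[|T|] \lesssim p^3 m^2 = pm \cdot (p^2 m) \le pm/100$, and a Markov-type concentration bound yields $|T| \le pm/50$ with high probability. Setting $B := (A \cap G_p \setminus T) \cup S^0$, a case analysis of Schur triples $(u,v,w) \in B^3$ (all in $A$; one in $S^0$ and two in $A$; two in $S^0$ and one in $A$; all in $S^0$; and $(x,x,2x)$) confirms that $B$ is sum-free, using respectively the sum-freeness of $A$, local safety, the definition of $T$, and the fact that both $L - L$ and $L + L \pmod n$ miss $L$. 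Summing,
\[
|B| \,\ge\, |A \cap G_p| \,-\, |T| \,+\, |S^0| \,\ge\, \frac{pn}{3} \,+\, \frac{pm}{4} \,-\, 4\sqrt{pn\log n},
\]
as required.

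The main obstacle is the choice of $L$. Placing $L$ deep inside $\ZZ_n \setminus A$ (say, near $0$) forces each $x \in L$ to have $\Theta(n)$ bad pairs, so safety probabilities become exponentially small once $p \gg 1/\sqrt n$, and the naive ``add safe elements'' strategy collapses. Placing $L$ adjacent to $A$ drops the bad pair count per element to $\Theta(m)$, which is exactly matched to the defining scale $p^2 m \le 1/100$ and keeps safety probabilities bounded away from $0$ throughout the range $\log n / n \ll p \ll 1$; the price is that $L + L \pmod n \subset A$, which must then be neutralised by the small removal $T$. Verifying the second-moment concentration for $|S^0|$ and $|T|$ is largely routine but needs care to ensure the with-high-probability bounds hold simultaneously.
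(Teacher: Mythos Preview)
Your approach is correct in outline and reaches the same bound, but via a genuinely different construction from the paper's. The paper does not keep the full middle third and then repair it with a removal set $T$; instead it \emph{shifts and shrinks} the base interval to $A=\{\lceil n/3\rceil+\lceil 4m\rceil+1,\ldots,\lfloor 2n/3\rfloor+\lfloor 2m\rfloor\}$, of size about $n/3-2m$, and places the extension $A'$ (of size about $3m$) just \emph{below} $A$. This positioning guarantees that every Schur triple in $A\cup A'$ has both summands in a short sub-interval $A''\subset A$ and the sum in $A'$; in particular, no Schur triple has two elements in $A'$, so no removal step is needed at all. The paper sacrifices about $2pm$ in $|A\cap G_p|$ but gains about $9pm/4$ safe elements from the longer $A'$, for the same net $pm/4$. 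Your route keeps the larger base interval at the price of the extra argument controlling $|T|$; the paper's route is cleaner, yours is more direct.

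Two genuine gaps remain in your sketch. First, local safety must also forbid the singleton events coming from $2v\equiv x\pmod n$ with $v\in A$: the Schur triple $(v,v,x)$ has only \emph{one} element of $A$, and it is covered neither by your pair count nor by the removal of $T$ (there is no reason for such a $v$ to lie in $S^0+S^0$). Adding at most two further $(1-p)$ factors is harmless for the $0.9$ bound, but as written the case analysis is incomplete. Second, Markov's inequality alone cannot give $|T|\le pm/50$ with high probability from $\Ex[|T|]\le pm/100$; it gives only probability at most $1/2$. You need an honest second-moment bound: writing $X$ for the number of triples $(x_1,x_2,x_1+x_2)\in (L\cap G_p)^2\times G_p$, one has $\Ex[X]\sim p^3m^2/2$ and $\Var(X)=O(\Ex[X]+m^3p^5)=o(\Ex[X]^2)$ since $pm\to\infty$, and Chebyshev then gives $|T|\le X\le pm/50$ with high probability. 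You do write ``second-moment concentration for $|T|$'' in the final paragraph, so this may be a slip, but the earlier phrase ``Markov-type concentration bound'' is wrong and should be replaced.
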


\begin{proof}
  The idea is as follows: First, we construct a sum-free set $A \subseteq G$ of size about $n/3 - 2m$; then we observe that $A \cap G_p$ typically has at least $pn/3 - 2pm - 4\sqrt{pn\log n}$ elements; finally, we show that, with high probability, we can add $9pm/4$ elements to  $A \cap G_p$ while still remaining sum-free.  

  We begin by letting $A = \{\ell, \ldots, r\}$, where $\ell = \lceil n/3 \rceil + \lceil 4m \rceil + 1$ and $r = \lfloor 2n/3 \rfloor + \lfloor 2m \rfloor$. Observe that $A$ is sum-free, since $2\ell > r$ and $2r - n < \ell$, and that
  \[
  \frac{n}{4} \, \le \, \frac{n}{3} - 2m - 3 \, \le \, |A| \, \le \, \frac{n}{3} - 2m.
  \]
  By Chernoff's inequality (applied with $a = 3\sqrt{pn\log n}$) and our lower bound on $p$, we have
  \begin{equation}
    \label{ineq:AcapGp2}
    \left| \big| A \cap G_p \big| - \left( \frac{pn}{3} - 2pm \right) \right| \, \le \, 4\sqrt{pn\log n}
  \end{equation}
  with high probability.

  Next, let $A' = \{\ell', \ldots, \ell-1\}$ and let $A'' = \{r', \ldots, r\} \subseteq A$, where $\ell' = \lceil n/3 \rceil + \lceil m \rceil + 1$ and $r' = \lfloor 2n/3 \rfloor - \lceil m \rceil$. Since $A$ is sum-free, $2\ell' > r$ and $(r' - 1) + r - n < \ell'$, it follows that every Schur triple $(x,y,z)$ in $A \cup A'$ satisfies $x, y \in A''$ and $z \in A'$.
  
  For every $x \in A'$, let
  \[
  C_1(x) = \Big\{ y \in A'' \colon x = y + y \Big\}, \quad C_2(x) = \Big\{ \{y, z\} \in {{A''} \choose 2} \colon x = y + z \Big\},
  \]
  and $C(x) = C_1(x) \cup C_2(x)$. Moreover, note that $|C_1(x)| \le 1$ and $|C_2(x)| \le |A''|/2 \le 2m$ for every $x \in A'$. Call an $x \in A'$ {\em safe} if no $B \in C(x)$ is fully contained in $G_p$. By the above observation about the Schur triples in $A \cup A'$, the set 
  \[
  \big( A \cap G_p \big) \cup \big\{ x \in A' \colon x \text{ is safe} \big\}
  \]
  is sum-free.

  In the remainder of the proof, we will show that a.a.s.~$A' \cap G_p$ contains at least $9pm/4$ safe elements. Together with~\eqref{ineq:AcapGp2}, this will imply the existence of a sum-free subset of $G_p$ with at least $pn/3 + pm/4 - 4\sqrt{pn \log n}$ elements.

  For each $x \in A'$, denote by $S_x$ the event that $x$ is safe, and let $S$ be the number of safe elements in $A'$. As in the proof of the lower bound in Theorem~\ref{thm:main}, by the FKG inequality we have
  \[
  \Ex[S] \, = \, \sum_{x \in A'} \Pr(S_x) \, \ge \, \sum_{x \in A'} (1-p)^{|C_1(x)|} (1-p^2)^{|C_2(x)|} \, \ge \, |A'| e^{-3p^2m} \, \ge \, \frac{11m}{4}.
  \]
Here we used the bounds $|C_1(x)| \le 1$, $|C_2(x)| \le 2m$, $|A'| \ge 3m - 2$ and $p^2m \le 1/100$. Similarly, using Janson's inequality, we obtain
  \begin{align*}
    \Var[S] & = \sum_{x,y \in A'} \Big( \Pr(S_x \wedge S_y) - \Pr(S_x)\Pr(S_y) \Big) \, \le \, \Ex[S] + \Ex[S]^2\left( e^{O(p^2 + p^3m)} - 1\right)\\
    & \le \Ex[S] + O\big( p^2 + p^3m \big) \cdot \Ex[S]^2 \, = \, o\big( \Ex[S]^2 \big).
  \end{align*}
To see this, observe that~\eqref{ineq:PSxSy},~\eqref{ineq:Deltaxy}, and~\eqref{SxSy} still hold (with $n$  replaced by $2m$) and that $C^*(x,y) = 0$. The last inequality follows from the fact that $p^2 + p^3m \ll 1$. 

By Chebyshev's inequality,
  \[
 \Pr\left( \big| S - \Ex[S] \big| > \frac{\Ex[S]}{11} \right) \, \le \, \frac{121\Var[S]}{\Ex[S]^2} \, = \, o(1),
  \]
and it follows that, with high probability, the number of safe elements in $A'$ satisfies 
  \[
  S \, \ge \, \frac{10\Ex[S]}{11} \, \ge \, \frac{5m}{2}.
  \]
Finally, note that for every $x \in A'$, the event $S_x$ is independent of the events $\{ y \in G_p \}_{y \in A'}$. Hence, by Chernoff's inequality, with high probability the number of safe elements in $A' \cap G_p$ satisfies
  \[
  \left| \big\{ x \in A' \cap G_p \colon \text{$x$ is safe} \big\} \right| \, \ge \, \frac{9p S}{10} \, \ge \, \frac{9pm}{4},
  \]
as required.
\end{proof}

Propositions~\ref{c_ex_2} and~\ref{c_ex_1} both follow easily from Lemma~\ref{thm:lower-bound-Zn}; since the proofs are almost identical, we shall prove only the former and leave the details of the latter to the reader.

\begin{proof}[Proof of Proposition~\ref{c_ex_2}]
  Fix an $A \in \SF(G)$ and recall that $|A| = n/3$ by Theorem~\ref{thm:muG}. (Under the assumptions of Proposition~\ref{c_ex_1}, we would have that $|A| = \lceil n/3 \rceil$, again by Theorem~\ref{thm:muG}). Thus, by Chernoff's inequality (with $a = 4\sqrt{pn\log n}$) and our lower bound on $p$, we have
  \[
  \Pr\left( \left| |A \cap G_p| - \frac{pn}{3} \right| > 4\sqrt{pn\log n} \right) \, \le \, \frac{1}{n^4}.
  \]
  Hence, by Corollary~\ref{cor:SFZ3q} (Corollary~\ref{cor:SFG} in the proof of Proposition~\ref{c_ex_1}), with high probability,
  \begin{equation}
    \label{ineq:AcapGp-lower}
    |A \cap G_p| \, \le \, \frac{pn}{3} + 4\sqrt{pn\log n} \quad \text{for every $A \in \SF(G)$}.
  \end{equation}
  Now, by Theorem~\ref{thm:lower-bound-Zn}, with high probability $G_p$ contains a sum-free subset with at least 
  \[
  \frac{pn}{3} \, + \, \frac{pm}{4} \, - \, 4\sqrt{pn\log n}
  \]
  elements, where $m = \min\{n, p^{-2}\}/100$. 

  Finally, observe that $pm \gg \sqrt{pn\log n}$, since if $m = n/100$, then this is equivalent to $pn \gg \log n$ and if $m = 1/(100p^2)$, then it is equivalent to $p^3 n \log n \ll 1$. Hence, by~\eqref{ineq:AcapGp-lower}, there exists a sum-free subset of $G_p$ that is larger than $A \cap G_p$ for every $A \in \SF(G)$.
\end{proof}

We shall now prove Proposition~\ref{2^k}. Note that for the hypercube, the conditions $x = y + z$ and $x = y - z$ are the same and so we have fewer restrictions for a vertex to be safe. This allows us to show that the threshold is different than in the case $G = \ZZ_{2n}$. 

\begin{proof}[Proof of Proposition~\ref{2^k}]
  With each non-zero element $a \in G = \ZZ_2^k$, we can associate the subgroup $E(a)$ of elements that are orthogonal to $a$ (when viewing $\ZZ_2^k$ as a linear space over the $2$-element field); note that $E(a)$ has index $2$ and let $O(a) = G \setminus E(a)$ to be its non-zero coset. Note that $n = |E(a)| = |O(a)|$, that $O(a)$ is sum-free, and that, by Theorem~\ref{thm:SFG-structure}, every element of $\SF(G)$ is of the form $O(a)$ for some non-zero $a \in G$. By Chernoff's inequality, for every $a \in G \setminus \{0\}$,
  \[
  \Pr\left( \big| |O(a) \cap G_p| - pn \big| > 4\sqrt{pn\log n}\right) \, \le \, \frac{1}{n^2},
  \]
  and hence, with high probability,
  \[
  \big| |O(a) \cap G_p| - pn \big| \le 4\sqrt{pn\log n} \quad \text{for every $a \in G \setminus \{0\}$}.
  \]
  Let $\one \in G$ be the all-ones vector. To simplify the notation, let $E = E(\one)$ and $O = O(\one)$. Since $E$ is isomorphic to $\ZZ_2^{k-1}$, $E$ contains a sum-free subset of size $2^{k-2}$. Choose one such subset and denote it $E'$. Note that $0 \not\in E'$. 

  For each $x \in E'$, let
  \[
  C(x) = \left\{ \{y, z\} \in {O \choose 2} \colon x = y + z \right\},
  \]
  and note that $|C(x)| = n/2$. Call an $x \in E'$ {\em safe} if no $B \in C(x)$ is fully contained in $G_p$, and for every $x \in E'$, denote by $S_x$ the event that $x$ is safe. Let $\eps = 1/4 - C/2 > 0$, let
  \[
  \mu \, := \, \frac{np^2}{2} \, \le \, \frac{C\log n}{2} \, = \, \left( \frac{1}{4} - \eps \right) \log n,
  \]
  and observe that, by the FKG inequality,
  \[
  \Pr(S_x) \, \ge \, (1-p^2)^{n/2} \, \ge \, \exp\Big( - \mu - O\big( np^4 \big) \Big) \, \ge \, \frac{n^{-1/4+\eps}}{2},  
  \]
  since $np^4 \ll 1$. Let $S$ be the number of safe elements in $E'$ and note that
  \[
  \Ex[S] \, = \, \sum_{x \in E'}\Pr(S_x) \, \ge \, \frac{n^{3/4 + \eps}}{4}.
  \]
  Since for two distinct $x, y \in E'$, the sets $C(x)$ and $C(y)$ are disjoint, it follows from Janson's inequality (as in the proof of Theorem~\ref{thm:main}) that
  \[
  \Pr(S_x \wedge S_y) \le (1-p^2)^{|C(x) \cup C(y)|} e^{np^3} = (1+o(1))\Pr(S_x)\Pr(S_y)
  \]
  and hence $\Var[S] = o(\Ex[S]^2)$. By the inequalities of Chebyshev and Chernoff (as in the proof of Lemma~\ref{thm:lower-bound-Zn}), with high probability the set 
  \[
  \big( O \cap G_p \big) \cup \big\{ x \in E' \cap G_p \colon \text{$x$ is safe} \big\}
  \]
  is sum-free and larger than $O(a) \cap G_p$ for every non-zero $a \in G$.
\end{proof}

\section{The group $\ZZ_{2n}$}\label{sec:proof-thm-Z2n}

Let $E_{2n}$ and $O_{2n}$ denote the sets of even and odd elements of $\ZZ_{2n}$, respectively, and recall that $O_{2n}$ is the unique maximum-size sum-free subset of $\ZZ_{2n}$. Let $G = \ZZ_{2n}$ and recall that $G_p$ denotes the $p$-random subset of $G$. We shall prove Theorem~\ref{thm:sharp}, which gives a sharp threshold for the property that $\SF(G_p) = \{ G_p \cap O_{2n} \}$.

For each $x \in E_{2n} \cap [n-1]$, let
\begin{align*}
  C_1(x) & = \Big\{ y \in O_{2n} \colon x = y + y \Big\}, \\
  C_2(x) & = \left\{ \{y, z\} \in {O_{2n} \choose 2} \colon x = y + z \right\}, \\
  C_3(x) & = \left\{ \{y, z\} \in {O_{2n} \choose 2} \colon x = y - z \right\},
\end{align*}
and let $C(x) = C_1(x) \cup C_2(x) \cup C_3(x)$. Call such an element $x$ {\em safe} if no $B \in C(x)$ is fully contained in $G_p$ and observe that $x$ is safe if and only if $(G_p \cap O_{2n}) \cup \{x\}$ is sum-free.

The following heuristic argument explains where the constant $1/3$ in the statement of Theorem~\ref{thm:sharp} comes from. To establish the $0$-statement, it is enough to prove that, with high probability, $G_p$ contains some safe element. Since $|C_1(x)| \le 2$ and $|C_2(x) \cup C_3(x)| \approx 3n/2$ for all $x \in E_{2n} \cap [n-1]$, we have that
\begin{equation}
  \label{eq:Ex-safe-elements}
  \Ex\big[\text{\#safe elements in $G_p$}\big] \approx p \cdot (1-p^2)^{3n/2} \cdot |E_{2n} \cap [n-1]| \approx pe^{-3p^2n/2}n/2.
\end{equation}
Now, note that the right-hand side of~\eqref{eq:Ex-safe-elements} grows with $n$ precisely when $p^2n \le \frac{1}{3}\log n$, so, at least in expectation, the sharp threshold for the property of containing a safe element is at $p = \sqrt{\log n / (3n)}$.  To establish the $1$-statement, one needs to prove that, with high probability, there are no $S \subseteq G_p \cap E_{2n}$ and $T \subseteq G_p \cap O_{2n}$ with $|T| \le |S|$ and $|S| \ge 1$ such that $(G_p \setminus T) \cup S$ is sum-free. Theorem~\ref{thm:approx-stability-full} rules out the possibility that such sets exists with $|S| = \Omega(pn)$. Otherwise, it turns out that the `worst' case is already when $|S|=1$ and $|T| = 0$; showing this is the most difficult and technical part of the proof. In other words, the threshold for the property of being sum-free good is the same as the threshold for the property of (not) containing a safe element.

We now turn to the rigorous proof of Theorem~\ref{thm:main} and begin by proving the $0$-statement. Even though we compute the precise value of the threshold, the proof is somewhat simpler than in the general case.

\begin{proof}[Proof of the $0$-statement in Theorem~\ref{thm:sharp}]
  Assume that 
  \[
  \frac{\log n}{n} \, \ll \, p \; \le \; \sqrt{ \frac{\log n}{3n} }
  \]
  and let $G_p$ be the $p$-random subset of $G = \ZZ_{2n}$. We shall prove that, with high probability, $G_p \cap E_{2n}$ contains a safe element and hence there exists a sum-free subset $B \subseteq G_p$ that is larger than $G_p \cap O_{2n}$. 

  Indeed, for each $x \in E_{2n} \cap [n-1]$, denote by $S_{x}$ the event that $x$ is safe and let $S$ denote the number of safe elements in $E_{2n} \cap [n-1]$. Note that $|C_1(x)| \le 2$, $\frac{n}{2} - 1 \le |C_2(x)| \le \frac{n}{2}$, and $|C_3(x)| = n$ for every $x \in E_{2n} \cap [n-1]$, where we used the fact that $x \neq -x$ for every such $x$. By the FKG inequality, 
\[
    \Pr(S_{x}) \, \ge \, (1-p)^{|C_1(x)|}(1-p^2)^{|C_2(x) \cup C_3(x)|} \, \ge \, \exp\left( - \frac{3p^2n}{2} + O\big( p + p^4n \big) \right),
\]
  and so, since $3p^2 n \le \log n$,
\begin{equation}
    \label{eq:ExS}
  \Ex[S] \, \ge \, \sum_{x \in E_{2n} \cap [n-1]}  \Pr(S_{x}) \, \ge \, \frac{n}{3} \cdot e^{-3p^2n/2} \, \ge \, \frac{1}{3} \sqrt{n}.
\end{equation}
  The calculation of the variance is similar to those in the proofs of Theorem~\ref{thm:main} and Lemma~\ref{thm:lower-bound-Zn} and we omit the details, noting only that $C_2(x) \cap C_2(y) = C_3(x) \cap C_3(y) = \emptyset$ and $|C_2(x) \cap C_3(y)| \le 4$ if $x \neq -y$. Using the fact that $E_{2n} \cap [n-1]$ and -$(E_{2n} \cap [n-1])$ are disjoint, we obtain
  \begin{align*}
    \Var[S] & = \sum_{x,y \in E_{2n} \cap [n-1]} \Big(\Pr(S_x \wedge S_y) - \Pr(S_x)\Pr(S_y) \Big) \, \le \, \Ex[S] + \Ex[S]^2 \left( e^{O(p^2 + np^3)} - 1\right) \\
    & \le \Ex[S] + O\big(p^2 + np^3 \big) \Ex[S]^2 \, = \, o\big( \Ex[S]^2 \big),
  \end{align*}
  since $\Ex[S] \gg 1$, by~\eqref{eq:ExS}, and $p^3n = o(1)$. Hence, by Chebyshev's inequality, with high probability the number of safe elements satisfies 
  \[
  S \, \ge \, \frac{\Ex[S]}{2} \, \ge \, \frac{n}{6} \cdot e^{-3p^2n/2}.
  \]

  Finally, for each $x \in E_{2n}$, the event $x \in G_p$ is independent of all the events $\{S_x\}_{x \in E_{2n} \cap [n-1]}$, so the probability that no safe element belongs to $G_p$ (given $|S| \ge \Ex[S]/2$) is at most 
  \[
  (1-p)^{\Ex[S]/2} \, \le \, \exp\left( -\frac{pn}{6} \cdot e^{-3p^2n/2} \right) \, \le \, \exp\left(-\frac{\sqrt{\log n}}{20}\right) \, \le \, o(1),
  \]
  as required.
\end{proof}

In order to obtain the $1$-statement, we shall have to work much harder. To show that all sum-free sets $B$ with at least $\eps p n$ even numbers are smaller than $G_p \cap O_{2n}$, we shall use the result of Conlon and Gowers (Theorem~\ref{thm:approx-stability-full}), as in the proof of Theorem~\ref{thm:main}. When $|B \cap E_{2n}| = o(pn)$, however, we shall need to study carefully the structure of the graph $\G_S$ for each $S \subseteq E_{2n}$, where $V(\G_S) = O_{2n}$ and $E(\G_S)$ consists of all pairs $\{a, b\}$ such that either $a + b \in S$, or $a - b \in S$.

\begin{proof}[Proof of the $1$-statement in Theorem~\ref{thm:sharp}]
  Let $\delta > 0$, let $C = C(n) \ge \ds\frac{1}{3} + \delta$, let
  \[
  p \, = \, p(n) \, = \, \sqrt{ \frac{C \log n}{n} },
  \]
  and let $G_p$ be the $p$-random subset of $G = \ZZ_{2n}$. We shall prove that, with high probability, $G_p \cap O_{2n}$ is the unique maximum-size sum-free subset of $G_p$. 

  Let $\eps > 0$ be sufficiently small and let $B$ be a maximum-size sum-free subset of $G_p$. Note that $|B| \ge |G_p \cap O_{2n}|$ and so, by Chernoff's inequality, 
  \[
  |B| \; \ge \; \left( \frac{1}{2} - \eps^2 \right) p |G|
  \]
  with high probability as $n \to \infty$. Hence, by Theorem~\ref{thm:approx-stability-full}, we have $|B \setminus O_{2n}| \le \eps p n$ with high probability. 

  Let $S = B \cap E_{2n}$ and suppose that $|S| = k$ for some positive $k \le \eps p n$. Since $B$ is at least as large as $G_p \cap O_{2n}$, there must exist a set $T \subseteq G_p \cap O_{2n}$, with $|T| \le k$, such that 
  \[
  B \, = \, S \cup \big(G_p \cap O_{2n} \big) \setminus T.
  \]
  We shall bound the expected number of such pairs $(S,T)$, with $|S| = k$ and $T$ minimal.

  For each set $S \subseteq E_{2n}$, define $\G_S$ to be the graph with vertex set $O_{2n}$ whose edges are all pairs $\{a, b\} \in {O_{2n} \choose 2}$, such that either $a + b \in S$ or $a - b \in S$. Note that we ignore loops\footnote{Hence we prove a slightly stronger result -- even if we allow Schur triples of the form $(a,a,2a)$ to be contained in a sum-free set, the odd numbers are still the best.}. We say that a pair of sets $(S, T)$ is {\em good} if the following conditions hold:
  \begin{itemize}
  \item[$(i)$] $S \subseteq G_p \cap E_{2n}$, with $|S| = k$,\\[-2ex]
  \item[$(ii)$] $T \subseteq G_p \cap O_{2n}$, with $|T| \le k$,\\[-2ex]
  \item[$(iii)$] $G_p \setminus T$ is an independent set in $\G_S$, \\[-2ex]
  \item[$(iv)$] $G_p \setminus T'$ is not independent for every $T' \subsetneq T$.
  \end{itemize}
  It is a simple (but key) observation that if $B = S \cup (G_p \cap O_{2n}) \setminus T$ is a maximum-size sum-free subset of $G_p$ with $|T| \le |S| = k$, then $(S, T')$ is a good pair for some $T' \subseteq T$. Indeed, every edge of $\G_S[G_p \cap O_{2n}]$ must have an endpoint in $T$ since $S \cup (G_p \cap O_{2n}) \setminus T$ is sum-free. Now take $T' \subseteq T$ to be minimal such that this holds. 

  Let $m$ denote the number of pairs $\{x,-x\} \subseteq S$, where $x \in E_{2n} \cap [n-1]$, and let $\one_S(x)$ denote the indicator function of the event $x \in S$. 

  \begin{cl}
    \label{cl:claim-one-Z2n}
    $e(\G_S) \ge \left( \ds\frac{3k-\one_S(n)}{2} - m \right) n - O\big( k^2 \big)$ and $\Delta(\G_S) \le 3k$.
  \end{cl}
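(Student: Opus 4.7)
The plan is to decompose the edges of $\G_S$ according to which element of $S$ witnesses each edge. For each $x \in S$, I would set
\[
U(x) := \Bigl\{ \{a, b\} \in {O_{2n} \choose 2} : a + b = x \Bigr\}, \qquad D(x) := \Bigl\{ \{a, b\} \in {O_{2n} \choose 2} : a - b \in \{x, -x\} \Bigr\},
\]
so that $E(\G_S) = \bigcup_{x \in S} \bigl( U(x) \cup D(x) \bigr)$.

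For the degree bound, I would fix $a \in O_{2n}$: any neighbour $b$ of $a$ satisfies one of $a+b \in S$, $a-b \in S$, $b-a \in S$, so $b \in \{x - a,\ a - x,\ a + x : x \in S\}$. This set has at most $3|S| = 3k$ elements, giving $\Delta(\G_S) \le 3k$.

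For the edge count I would record three easy facts. First, the $U(x)$ are pairwise disjoint (since $a+b$ determines $x$) and $|U(x)| \ge n/2 - 1$, after discarding the at most two solutions $a \in O_{2n}$ of $2a = x$ (which would give loops). Second, $D(x) = D(-x)$ and $D(x) \cap D(y) = \emptyset$ whenever $y \notin \{x, -x\}$; moreover $|D(x)| = n$ for each $x \in E_{2n} \setminus \{0, n\}$ (every $a \in O_{2n}$ yields a distinct edge $\{a, a-x\}$), while $|D(n)| = n/2$ when $n$ is even (self-inversion halves the count), and $n \notin E_{2n}$ when $n$ is odd. Third, $0 \notin S$ since $B$ is sum-free; therefore the orbits of the involution $x \mapsto -x$ acting on $S$ consist of at most one singleton $\{n\}$ (possible only when $n$ is even), exactly $m$ two-element orbits $\{x, -x\}$, and $k - \one_S(n) - 2m$ further singletons. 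Summing $|D(x)|$ over orbit representatives yields
\[
\Bigl| \bigcup_{x \in S} D(x) \Bigr| \;=\; \bigl( k - \one_S(n) - m \bigr) n \;+\; \one_S(n) \cdot \frac{n}{2} \;=\; \Bigl( k - m - \frac{\one_S(n)}{2} \Bigr) n.
\]
Any pair in $U(x) \cap D(y)$ must satisfy $2a \in \{x + y, x - y\}$, which has at most four solutions in $\ZZ_{2n}$, so $|U(x) \cap D(y)| = O(1)$ and the total overlap is $O(k^2)$. Inclusion-exclusion then gives
\[
e(\G_S) \;\ge\; \frac{kn}{2} - k \,+\, \Bigl( k - m - \frac{\one_S(n)}{2} \Bigr) n \,-\, O(k^2) \;=\; \Bigl( \frac{3k - \one_S(n)}{2} - m \Bigr) n - O(k^2).
\]

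The one genuinely nontrivial point is the handling of the self-inverse element $x = n$ when $n$ is even: it yields only $n/2$ instead of $n$ difference-pairs, which is exactly what the $\one_S(n)/2$ correction in the statement records. Everything else is routine bookkeeping.
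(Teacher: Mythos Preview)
Your proposal is correct and follows essentially the same approach as the paper: both arguments split $E(\G_S)$ into ``sum'' edges (your $U(x)$) and ``difference'' edges (your $D(x)$), count each family separately --- obtaining $kn/2 - O(k)$ and $(k - m - \tfrac{1}{2}\one_S(n))n$ respectively --- and then bound the overlap between the two families by $O(k^2)$ via the observation that $a+b,\, a-b \in S$ pins down $2a$ to $O(1)$ values. Your orbit bookkeeping for $\bigcup D(x)$ and your degree argument are the same as the paper's, just phrased slightly more formally.
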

  \begin{proof}
    Each $x \in S$ (other than $x = n$, which contributes $n/2$) contributes $n$ edges $\{a, b\}$ with $a - b = x$ to $\G_S$, with two edges incident to each vertex of $O_{2n}$. The edge sets corresponding to different members of $S$ are disjoint, except for those corresponding to $x$ and $-x$, which are identical. Thus the pairs $\{a, b\}$ with $a - b \in S$ contribute at least $\big( k - m - \frac{1}{2} \one_S(n) \big)n$ edges to $\G_S$, and there are at most $2(k - m) \le 2k$ such edges incident to each vertex. 

    Now consider the pairs $\{a, b\}$ with $a + b \in S$. Each $x \in S$ contributes at least $(n-2)/2$ such edges (since there are at most two loops, at $x/2$ and $n + x/2$) and these sets of edges are disjoint for different members of $S$. Moreover, each vertex is incident to at most $k$ such edges, and so $\Delta(\G_S) \le 3k$.

    Finally, if for some $\{a, b\}$, both $a + b$ and $a - b$ are in $S$, then $\{a, b\} = \{x+y, x-y\}$ or $\{a,b\} = \{n+x+y, n+x-y\}$, where $2x, 2y \in S$. There are at most $k^2$ such pairs $\{2x,2y\}$, and so the number of pairs $\{a, b\}$ with $a + b \in S$ and $a - b \not\in S$ is at least $nk/2 - 3k^2$. Hence the total number of edges is at least
    \[
    \left( k - m - \frac{\one_S(n)}{2} \right) n \,+\, \frac{nk}{2} \, - \, 3k^2,
    \]
    as required.
  \end{proof}

  The following idea is key: 
  
  \begin{cl}
    \label{cl:claim-two-Z2n}
    If a pair $(S, T)$ is good, then there exists a subset $U \subseteq (G_p \cap O_{2n}) \setminus T$ with $|U| \le |T|$, such that $T \subseteq N_{\G_S}(U)$ and in $\G_S$ there is matching of size $|U|$ from $U$ to $T$.
  \end{cl}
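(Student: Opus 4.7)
My plan is to derive the claim from K\"onig's theorem applied to an auxiliary bipartite graph $H$ on vertex set $T \cup X$, where $X := (G_p \cap O_{2n}) \setminus T$ and the edges of $H$ are exactly those edges of $\G_S$ that go between $T$ and $X$. The intended choice of $U$ is the set of $X$-endpoints of a maximum matching $M$ of $H$, and the matching from $U$ to $T$ of size $|U|$ promised by the claim will simply be $M$ itself. The bound $|U| = |M| \le |T|$ is then automatic, since $M$ is a bipartite matching and $T$ is one of the two parts, so the only real content is verifying $T \subseteq N_{\G_S}(U)$.

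Before turning to K\"onig, I would first cash in the minimality condition (iv). The key consequence is that every $t \in T$ has at least one $\G_S$-neighbor in $X$: by (iii), $\G_S[G_p \setminus T]$ has no edges, so any edge of $\G_S[G_p \setminus (T \setminus \{t\})]$, which exists by (iv), must be incident to $t$, and its other endpoint must lie in $G_p \setminus T = X$ since $\G_S$ has no loops. Thus the $T$-side of $H$ contains no isolated vertex, which is the hypothesis I will need when handling vertices of $T$ that lie outside the vertex cover.

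Now I apply K\"onig's theorem: let $M$ be a maximum matching of $H$ and $C$ a minimum vertex cover, so $|M| = |C|$. The standard consequence of this equality is that every vertex of $C$ is saturated by $M$, since the map sending each edge of $M$ to one of its endpoints in $C$ is an injection between two sets of the same size, hence a bijection. Setting $U$ to be the $X$-endpoints of $M$, the verification of $T \subseteq N_{\G_S}(U)$ splits into two cases. For $t \in T \cap C$, saturation provides $t$ with a partner in $U$ via $M$. For $t \in T \setminus C$, the minimality step supplies a neighbor $x \in X$; since $t \notin C$, the vertex-cover property forces $x \in X \cap C$, and $X \cap C$ is itself $M$-saturated and therefore contained in $U$. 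Neither step is really an obstacle; the only delicate point is getting the case split to mesh correctly with the saturation property, and that is exactly what K\"onig's theorem delivers for free.
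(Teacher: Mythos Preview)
Your argument is correct and follows essentially the same construction as the paper: take a matching between $T$ and $X=(G_p\cap O_{2n})\setminus T$ in $\G_S$ and let $U$ be its $X$-endpoints, then use the minimality condition~(iv) to supply each $t\in T$ with a neighbour in $X$. The only difference is in the verification of $T\subseteq N_{\G_S}(U)$: where you invoke K\"onig's theorem and split on whether $t$ lies in the minimum vertex cover, the paper simply takes a \emph{maximal} (not necessarily maximum) matching and observes directly that a vertex $t\in T\setminus N_{\G_S}(U)$ together with its neighbour $b\in X$ (which cannot lie in $U$) would extend the matching---so K\"onig is not needed.
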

  
  \begin{proof}
    To see this, we simply take a maximal matching $M$ from $T$ to $G_p \setminus T$ in $\G_S$ and let $U$ be the set of vertices in $G_p \setminus T$ that are incident to $M$. By construction, $|U| = |M| \le |T|$ and $M$ is a matching of size $|U|$ from $U$ to $T$. 
  
    It remains to prove that $T \subseteq N_{\G_S}(U)$. Suppose not, i.e., assume that there is a vertex $a \in T \setminus N_{\G_S}(U)$. Since $G_p \setminus T$ is an independent set in $\G_S$ and $T$ is minimal, it follows that $a$ has a neighbor $b \in G_p \setminus T$. But $a \not\in N_{\G_S}(U)$, so $b \not\in U$, and thus $M \cup \{a,b\}$ is a matching from $T$ to $G_p \setminus T$ which is larger than $M$. This contradicts the choice of $M$. It follows that $T \subseteq N_{\G_S}(U)$, as claimed.
  \end{proof}

  The plan for the rest of the proof is the following: Say that a triple $(S,T,U)$ is good if $(S, T)$ is good, $U \subseteq (G_p \cap O_{2n}) \setminus T$ and $T \subseteq N_{\G_S}(U)$. Let $Z' = Z'(k,\ell,m,j)$ denote the number of good triples $(S, T, U)$ with $|S| = k$, $|T| = \ell$, $|U| = j$, and with $m$ pairs $\{x,-x\}$ in $S$, and let
  \[
  Z \; := \; \sum_{k=1}^{\eps p n}\sum_{m = 0}^{k/2}\sum_{\ell = 0}^k\sum_{j=0}^{\ell} Z'(k,\ell,m,j).
  \]
  By Claim~\ref{cl:claim-two-Z2n} and the discussion above, if there exists a maximum-size sum-free set $B \ne G_p \cap O_{2n}$, then $Z \ge 1$, and hence
  \begin{equation}\label{PatmostZ}
    \Pr\Big( \SF(G_p) \ne \{G_p \cap O_{2n}\} \Big) \, \le \, \Ex\big[ Z \big] \, = \, \sum_{k,\ell,m,j} \Ex\big[ Z'(k,\ell,m,j) \big].
  \end{equation}
  It thus will suffice to bound $\Ex[ Z'(k,\ell,m,j) ]$ for each $k$, $\ell$, $m$, and $j$. We shall prove that $\Ex[ Z'(k,\ell,m,j) ] \le n^{-\eps k}$ if $kp \le 1$, and $\Ex[ Z'(k,\ell,m,j) ] \le e^{-\sqrt{n}}$ otherwise.

  Let us fix $k$, $m$, $\ell$, and $j$, and count the triples $(S,T,U)$ which contribute to $Z'(k,\ell,m,j)$. There are at most $3^k {n \choose {k-m}}$ choices for the set $S$ if $n \not\in S$ (since $S$ intersects $k-m$ of the pairs $\{x,-x\}$), and similarly there are at most $3^k {n \choose {k-m-1}}$ choices for $S$ if $n \in S$. Also, regardless of whether $n \in S$ or $n \not\in S$, there are at most ${n \choose k}$ choices for $S$ with $|S| = k$.  
  
  Now, for each $S \subseteq E_{2n}$ and $\ell,j \in \N$, let $W(S,\ell,j)$ denote the number of pairs $(T,U)$ such that $T,U \subseteq G_p \cap O_{2n}$, $T \cap U = \emptyset$ and $T \subseteq N_{\G_S}(U)$, with $|T| = \ell$ and $|U| = j$. 

  \begin{cl}
    \label{cl:claim-three-Z2n}
    If $|S| = k$ and $0 \le j \le \ell \le k \le \eps pn$, then
    \begin{equation}
      \label{ineq:triples}
      \Ex\big[ W(S,\ell,j) \big] \, \le \, (3e^2p^2n)^k \, \ll \, (C \log n)^{2k}.
    \end{equation}
  \end{cl}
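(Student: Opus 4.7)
The plan is a direct first-moment calculation followed by a short optimization. For fixed $U, T \subseteq O_{2n}$ with $T \cap U = \emptyset$, the event $U \cup T \subseteq G_p$ has probability $p^{j+\ell}$, independent of the purely deterministic conditions $|T|=\ell$, $|U|=j$, and $T \subseteq N_{\G_S}(U)$. Linearity of expectation therefore gives
\[
\Ex\bigl[ W(S,\ell,j) \bigr] \;=\; \sum_{\substack{U \subseteq O_{2n}\\ |U|=j}}\sum_{\substack{T \subseteq N_{\G_S}(U)\setminus U\\ |T|=\ell}} p^{j+\ell}.
\]
The degree bound $\Delta(\G_S) \le 3k$ from Claim~\ref{cl:claim-one-Z2n} yields $|N_{\G_S}(U) \setminus U| \le 3kj \le 3k^{2}$, so
\[
\Ex\bigl[W(S,\ell,j)\bigr] \;\le\; \binom{n}{j}\binom{3k^{2}}{\ell}p^{j+\ell}.
\]

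With the degenerate case $j = 0$ trivial (it forces $\ell = 0$), I would next apply $\binom{a}{b} \le (ea/b)^{b}$ to obtain
\[
\Ex\bigl[W(S,\ell,j)\bigr] \;\le\; \left(\frac{enp}{j}\right)^{j}\left(\frac{3ek^{2}p}{\ell}\right)^{\ell},
\]
and maximize over $1 \le j \le \ell \le k$. This optimization is the only substantive step. The log-derivative in $j$ equals $\log(np/j)$, which is positive throughout the range since $j \le k \le \eps pn < np$, so the maximum pushes $j$ up to $\ell$. On the diagonal $j = \ell$ the expression collapses to $(3e^{2}nk^{2}p^{2}/\ell^{2})^{\ell}$, whose log-derivative in $\ell$ is $\log(3nk^{2}p^{2}/\ell^{2})$; using $np^{2} = C\log n \to \infty$, this stays positive for all $\ell \le k$. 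Hence the maximum occurs at $j = \ell = k$ and equals
\[
\left(\frac{enp}{k}\right)^{k}(3ekp)^{k} \;=\; (3e^{2}np^{2})^{k},
\]
which is the first inequality in the claim.

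The second inequality is then immediate: since $3e^{2}p^{2}n = 3e^{2}C\log n$, we have
\[
\frac{(3e^{2}p^{2}n)^{k}}{(C\log n)^{2k}} \;=\; \left(\frac{3e^{2}}{C\log n}\right)^{\!k} \;\longrightarrow\; 0
\]
as $n \to \infty$, completing the proof sketch.
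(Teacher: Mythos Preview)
Your proof is correct and follows essentially the same approach as the paper: a first-moment bound $\Ex[W(S,\ell,j)] \le \binom{n}{j}\binom{3kj}{\ell}p^{j+\ell}$ (you loosen $3kj$ to $3k^2$, which is harmless), followed by an optimization showing the maximum is attained at $j=\ell=k$. The only cosmetic difference is that the paper argues the monotonicity in $j$ and $\ell$ directly from the binomial coefficients and the standard fact that $x \mapsto (a/x)^x$ is increasing on $[0,a/e]$, whereas you compute log-derivatives; the substance is identical.
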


  \begin{proof}
    We have at most ${n \choose j}$ choices for $U$ and, given $S$ and $U$, there are at most ${{3kj} \choose \ell}$ choices for $T$ since $T \subseteq N_{\G_S}(U)$ and $\Delta(\G_S) \le 3k$. Since $T$ and $U$ are disjoint, the probability that $T$ and $U$ are contained in $G_p$ is $p^{\ell+j}$. 

    To simplify the computation, note that for fixed $\ell$ and $k$ with $0 \le \ell \le k \le \eps pn$, the functions ${n \choose j}p^j$ and ${3kj \choose \ell}$ are increasing in $j$ if $0 \le j \le \ell$. Therefore,
    \begin{equation}
      \label{ineq:jl}
      \Ex\big[ W(S,\ell,j) \big]  \, \le \, p^{\ell+j}{n \choose j}{3kj \choose \ell} \, \le \, p^{2\ell}{n \choose \ell}{3k\ell \choose \ell} \, \le \, \left(\frac{3e^2p^2nk}{\ell}\right)^\ell.
    \end{equation}
    Since, for any $a > 0$, the function $x \mapsto \left(\frac{a}{x}\right)^x$ is increasing for $0 \le x \le \frac{a}{e}$ and since $3e^2p^2n \gg 1$ and $0 \le \ell \le k$, the quantity in the right-hand side of \eqref{ineq:jl} is maximized when $\ell = k$. This yields \eqref{ineq:triples}.
  \end{proof}
  
  Finally, recall that if $(S, T, U)$ is good, then no edge of the graph 
  \[
  \G_{S,T,U} \, := \, \G_S\big[ O_{2n} \setminus (T \cup U) \big]
  \]
  has both its endpoints in $G_p$. Let $\S(k,m)$ denote the set of $S \subseteq E_{2n}$ with $|S| = k$ and with $m$ pairs $\{x,-x\}$ in $S$. Since the vertex set of $\G_{S,T,U}$ is disjoint from $S \cup T \cup U$, it follows that the events $e(\G_{S,T,U}[G_p]) = 0$ and $S,T,U \subseteq G_p$ are all independent. Therefore,
  \begin{equation}
    \label{Zbound}
    \Ex\big[ Z'(k,\ell,m,j) \big] \, \le \, \sum_{S \in \S(k,m)} \Pr(S \subseteq G_p) \cdot \Ex\big[ W(S,\ell,j) \big] \cdot \max_{T,U} \left\{ \Pr\Big( e\big( \G_{S,T,U}[G_p] \big) = 0 \Big) \right\},
  \end{equation}
  where the maximum is taken over all pairs $(T,U)$ as in the definition of $W(S,\ell,j)$. Hence, to complete the proof, it only remains to give a uniform bound on the probability that $e(\G_{S,T,U}[G_p]) = 0$. We shall do so using Janson's inequality. 

  Note first that, by Claim~\ref{cl:claim-one-Z2n}, $\Delta(\G_{S,T,U}) \le \Delta(\G_S) \le 3k$ and\
 \begin{equation}
    \label{eq:eGSTU}
  e(\G_{S,T,U}) \, \ge \, e(\G_S) - (|T| + |U|)\Delta(\G_S) \, = \, \left( \frac{3k - 2m - \one_S(n)}{2} \right) n - O\big( k^2 \big) \, \ge \, \frac{kn}{2},
 \end{equation}
where the final inequality follows since $2m \le k \ll n$. Set 
$$\mu \, = \, \frac{3k}{2}p^2n \qquad \text{and} \quad \quad \mu' \, = \, p^2 \cdot e(\G_{S,T,U})$$ 
and observe that $\mu/3 \le \mu' \le \mu$, by~\eqref{eq:eGSTU}, and that
   \begin{equation}
    \label{eq:deltaprime}
  k^2p^3 n \, \le \, \Delta' \, := \, \ds\sum_{B_1 \sim B_2} p^{|B_1 \cup B_2|} \, \le \, {3k \choose 2}p^3n,
   \end{equation}
  where the sum is over pairs of edges of $\G_{S,T,U}$ that share an endpoint. There are two cases.

  \bigskip
  \noindent
  {\bf Case 1. $\Delta' \le \mu'$.}
  \medskip
  
  In this case, we shall show that $\Ex[Z'(k,\ell,m,j)] \le n^{-\eps k}$. By Janson's inequality and~\eqref{eq:eGSTU}, we have
  \begin{equation}
    \label{ineq:JansonCase1}
    \Pr\Big( e\big( \G_{S,T,U}[G_p] \big) = 0\Big) \, \le \, e^{-\mu' + \Delta'/2} \, \le \, e^{-\mu + \Delta'/2} \exp\left( \frac{2m+\one_S(n)}{2} p^2n + O(p^2k^2)\right).
  \end{equation}
  Suppose first that $\Delta' \le \eps \mu$. Then $kp \le 1$, by~\eqref{eq:deltaprime}, and so  
  \begin{equation}
    \label{mudelta}
    - \mu  + \frac{\Delta'}{2} + O\big( p^2k^2 \big) \, \le \, - \big( 1 - \eps \big) \mu \, = \, -\left( 1 - \eps \right) \frac{3C}{2} k \log n \, \le \, - \left( \frac{1}{2} + 2\eps \right) k \log n,
  \end{equation}
  since $p^2 n = C \log n$ and $C > 1/3 + 2\eps$, provided that $\eps > 0$ is sufficiently small. Moreover,
  \begin{align}\label{ineq:m}
    \sum_{S \in \S(k,m)} \exp\left(\frac{2m + \one_S(n)}{2}  p^2n\right) & \, \le \, 3^k \left[{n \choose k-m} e^{p^2nm} + {n \choose k-m-1}e^{p^2n(m+1)}\right] \nonumber \\
    & \, \le \, 2^{O(k)} n^{Cm} \bigg( \frac{n}{k} \bigg)^{k-m} \big( 1 + k n^{C-1} \big)
  \end{align}
  by the discussion above Claim~\ref{cl:claim-three-Z2n}, and since\footnote{Here we used the usual bound ${n \choose k} \le \big( \frac{en}{k} \big)^k$, except when $k - m  = 1$, in which case ${n \choose k-m-1} = 1$.} $2m \le k$ and $e^{p^2n} = n^C$.  
  
 Hence, combining Claim~\ref{cl:claim-three-Z2n} with~\eqref{Zbound} and~\eqref{ineq:JansonCase1}--\eqref{ineq:m}, we obtain 
  \begin{align} \label{eq:ExZbound:case1}
 & \Ex\big[ Z'(k,\ell,m,j) \big] \, \le \, p^k \cdot (C \log n)^{2k} \sum_{S \in \S(k,m)} \max_{T,U} \Big\{ \Pr\Big( e\big( \G_{S,T,U}[G_p] \big) = 0\Big) \Big\} \nonumber\\
 & \hspace{2cm} \, \le \, \bigg( O(1) \cdot p \cdot (C \log n)^2 \cdot n^{-1/2 - 2\eps} \cdot \frac{n}{k} \bigg)^k  \left( k n^{C-1}  \right)^m \big( 1 + k n^{C-1} \big) \, \ll \, n^{-\eps k}
  \end{align}
  as $n \to \infty$, assuming that $C < 1/2$ (which implies that $k n^{C-1} < 1$, since $kp \le 1$). If $C \ge 1/2$, then a similar calculation works, but we do not need to estimate so precisely, as we can obtain a much stronger bound in \eqref{mudelta}, which allows us to replace the $n^{-1/2-2\eps}$ term in~\eqref{eq:ExZbound:case1} with  $n^{-4C/3}$; moreover, we recall that $m \le k/2$ and if $C \le 4$, then $pn \le 2\sqrt{n \log n}$. We leave the details to the reader. 

  The case $\eps\mu < \Delta' \le \mu'$ is similar, so we shall skip some of the details. Note that $3kp > \eps$, by~\eqref{eq:deltaprime}, and hence $pn/k < (3 / \eps) p^2 n = (3C / \eps) \log n$. Observe also that~\eqref{ineq:m} still holds and that 
  \[
  - \mu  + \frac{\Delta'}{2} + O\big( p^2k^2 \big) \, \le \, -\frac{2\mu}{5} \, = \, - \frac{3C}{5} k \log n,
  \]
  since $\Delta' \le \mu' \le \mu$ and $k \ll n$. Thus~\eqref{eq:ExZbound:case1} becomes in this case
  \[
  \Ex\big[ Z'(k,\ell,m,j) \big] \, \le \, \left( O(1) \cdot p \cdot (C \log n)^2 \cdot n^{-3C/5} \cdot \frac{n}{k} \right)^k  \left( k n^{C-1}  \right)^m \big( 1 + k n^{C-1} \big) \, \ll \, n^{-\eps k},
  \]
 since $m \le k/2$. It follows that if $\Delta' \le \mu'$, then $\Ex[Z'(k,\ell,m,j)] \le n^{-\eps k}$, as claimed.

  \bigskip
  \noindent
  {\bf Case 2. $\Delta' \ge \mu'$.}
  \medskip

  In this case, we shall show that $\Ex[Z'(k,\ell,m,j)] \le e^{-\sqrt{n}}$. Indeed, by Janson's inequality,
  \[
  \Pr\Big( e\big( \G_{S,T,U}[G_p] \big) = 0 \Big) \, \le \, \exp\left(-\frac{(\mu')^2}{2\Delta'}\right) \, \le \, \exp\left(-\frac{\mu^2}{18\Delta'}\right) \, \le \, e^{- pn / 36},
  \]
  by~\eqref{eq:deltaprime}, and since $\mu' \ge \frac{\mu}{3}$. We shall also need an improved version of Claim~\ref{cl:claim-three-Z2n} when $k$ is large, since the upper bound ${{3kj} \choose \ell}$ on the number of choices for $T$ becomes very bad. Fortunately, the following bound is trivial:
  \[
  \Ex[ W(S,\ell,j)] \, \le \, {n \choose \ell} {n \choose j} p^{\ell + j} \, \le \, {n \choose k}^2 p^{2k},
  \]
  where the second inequality follows since $j \le \ell \le k \le \eps p n$. Finally, recall that we have at most ${n \choose k}$ choices for $S$, and that $pn \gg \sqrt{n}$. By \eqref{Zbound}, it follows that
  \begin{align*}
  \Ex[Z'(k,\ell,m,j)] & \, \le \, p^k \cdot {n \choose k}^2 p^{2k} \sum_{S \in \S(k,m)} \max_{T,U} \Big\{ \Pr\Big( e\big( \G_{S,T,U}[G_p] \big) = 0\Big) \Big\}\\
  & \, \le \, {n \choose k}^3 p^{3k} e^{-\mu^2/(18\Delta')} \, \le \, \left(\frac{epn}{k}\right)^{3k} e^{-pn/36} \, \le \, e^{-\sqrt{n}},
  \end{align*}
  since $k \le \eps p n$, as claimed. 

  \bigskip

  Having bounded $\Ex[Z']$ in both cases, the result now follows easily by summing over $k$, $\ell$, $m$, and $j$.  Indeed, by~\eqref{PatmostZ}, together with the application of Theorem~\ref {thm:approx-stability-full} noted at the start of the proof, we have
  \begin{align*}
    \Pr\Big( \SF(G_p) \ne \{G_p \cap O_{2n}\} \Big) & \, \le \, \sum_{k=1}^{\eps p n}\sum_{m = 0}^{k/2}\sum_{\ell = 0}^k\sum_{j=0}^{\ell} \Ex\big[ Z'(k,\ell,m,j) \big] \, + \, o(1)\\
    & \, \le \, \sum_{k=1}^{\eps p n} (k+1)^3\max\left\{n^{-\eps k}, e^{-\sqrt{n}}\right\}  \, + \, o(1) \; \to \; 0
  \end{align*}
  as $n \to \infty$, as required. This completes the proof of Theorem~\ref{thm:sharp}.
\end{proof}

\bigskip
\noindent
{\bf Acknowledgment.} The bulk of this work was done during the IPAM Long Program, Combinatorics: Methods and Applications in Mathematics and Computer Science. We would like to thank the organizers of this program, the members of staff at IPAM, and all its participants for creating a fantastic research environment. We would also like to thank David Conlon, Gonzalo Fiz Pontiveros, Simon Griffiths, Tomasz {\L}uczak, and Vera S{\'o}s for helpful and stimulating discussions.

\bibliographystyle{siam}
\bibliography{BMS_sumfree}
 
\end{document}